\documentclass[12pt,reqno]{amsart}
\usepackage[margin=1in]{geometry}
\usepackage{graphicx}
\usepackage{amsmath,amsthm,amsfonts,mathrsfs,amssymb,float,color}
\usepackage{graphicx}
\usepackage{textcomp}
\usepackage{verbatim}
\usepackage{ upgreek }
\usepackage[T1]{fontenc}
\usepackage[utf8]{inputenc}
\usepackage{hyperref}
\hypersetup{colorlinks=true,citecolor=blue,linkcolor=blue}
\usepackage{epsfig,subfigure,fancybox,balance}
\usepackage{diagbox}
\usepackage{youngtab}
\usepackage{algorithm}
\usepackage{algpseudocode}

\newcommand{\beq}[1]{\begin{equation} \label{#1}}
\newcommand{\eeq}{\end{equation}}
\newcommand{\bea}{\bed\begin{array}{rl}}
\newcommand{\eea}{\end{array}\eed}
\newcommand{\bed}{\begin{displaymath}}
\newcommand{\eed}{\end{displaymath}}
\newcommand{\barray}{\begin{array}{ll}}
\newcommand{\earray}{\end{array}}
\newcommand{\disp}{\displaystyle}
\newcommand{\ad}{&\!\disp}
\newcommand{\aad}{&\disp}
\newcommand{\al}{\alpha}
\newcommand{\e}{\varepsilon}

\newcommand{\la}{\lambda}
\newcommand{\La}{\Lambda}
\newcommand{\sg}{\sigma}

\newcommand{\ga}{\gamma}
\newcommand{\Ga}{\Gamma}
\newcommand{\dl}{\delta}
\newcommand{\Dl}{\Delta}
\newcommand{\cd}{(\cdot)}

\newcommand{\sqe}{\sqrt{\e}}

\def\phi{\varphi}

\def\half{\frac{1}{2}}


\newcommand{\CF}{{\mathcal F}}

\newcommand{\CU}{\mathcal{U}}
\newcommand{\CX}{\mathcal{X}}

\newcommand{\CO}{\mathcal{O}}

\newcommand{\CL}{\mathcal{L}}

\newcommand{\CD}{\mathcal{D}}

\newcommand{\CT}{\mathcal{T}}

\newcommand{\CY}{\mathcal{Y}}

\newcommand{\EE}{{\mathbb E}}
\newcommand{\PP}{{\mathbb P}}

\newcommand{\NN}{{\mathbb N}}
\newcommand{\rr}{{\mathbb R}}
\newcommand{\QQ}{{\mathbb Q}}


\newcommand{\lbar}{\overline}
\newcommand{\wdt}{\widetilde}
\newcommand{\wdh}{\widehat}

\newcommand{\qv}[1]{\langle #1 \rangle}
\newcommand{\bqv}[1]{\big\langle #1 \big\rangle}
\newcommand{\Bqv}[1]{\Big\langle #1 \Big\rangle}

\newcommand{\vsg}{\varsigma}
\numberwithin{equation}{section}

\newtheorem{thm}{Theorem}[section]
\newtheorem{lem}[thm]{Lemma}

\newtheorem{prop}[thm]{Proposition}

\newtheorem{exm}[thm]{Example}
\newtheorem{ass}[thm]{Assumption}

\newcommand{\thmref}[1]{Theorem~{\rm \ref{#1}}}
\newcommand{\lemref}[1]{Lemma~{\rm \ref{#1}}}

\newcommand{\figref}[1]{Figure~{\rm \ref{#1}}}
\newcommand{\assref}[1]{Assumption~{\rm \ref{#1}}}
\newcommand{\secref}[1]{Section~{\rm \ref{#1}}}



\begin{document}
\title[Partially observed optimal control of SPDEs: SMPs and numerics]{Optimal Control
of Stochastic Partial Differential Equations with Partial Observations: Stochastic Maximum Principles and Numerical Approximation
}

\author{Yanzhao Cao} 
\address{Department of Mathematics and Statistics, Auburn University, Auburn, AL, 36849}
\email{yzc0009@auburn.edu}

\author{Hongjiang Qian}
\address{Department of Mathematics and Statistics, Auburn University, Auburn, AL 36849}
\email{hjqian.math@gmail.com}

\author{George Yin}
\address{Department of Mathematics, University of Connecticut, CT 06269}
\email{gyin@uconn.edu}

\thanks{The research of H. Qian and Y. Cao was supported by the U.S. Department of Energy under the grant DE-SC0022253, DE-SC0025649 and the research of G. Yin was supported in part by the National Science Foundation under grant DMS-2204240.}

\subjclass[2020]{93E11, 60G35, 65K10, 60H15, 60H10}
\keywords{Partially observed optimal control, nonlinear filtering, stochastic maximum principle, stochastic partial differential equation, numerical approximation.}

\begin{abstract}
This work
establishes a general stochastic maximum principle for partially observed optimal control of semi-linear stochastic partial differential equations in 
a nonconvex control domain. The state evolves in a Hilbert space driven by a cylindrical Wiener process and finitely many Brownian motions, while observations are in
an Euclidean space having correlated noise.
For convex control domain and 
diffusion coefficients in the state being control-independent,
numerical algorithms are developed to solve
the partially observed optimal control problems
using stochastic gradient descent algorithm combined with finite element approximations and the branching filtering algorithm.
Numerical experiments are conducted for demonstration.
\end{abstract}
\maketitle

\section{Introduction}
This work focuses on
an optimal control problem with partial observations, a topic that has been studied for several decades with numerous applications in engineering, economics, physics, and finance; see Fleming \cite{Fle68}, Bensoussan \cite{Ben92}, Pardoux \cite{Par82}, Fleming and Pardoux \cite{FP82} and references therein.
In
control theory, a major milestone was achieved in the 1950s when Pontryagin \cite{PBGM62} established a necessary condition for the optimal control of finite-dimensional deterministic systems. Since then, extensive research has focused on generalizing this result to stochastic systems. The first stochastic maximum principle was derived in \cite{Ku65}, where Kushner confirmed the existence of stochastic maximum principle and derived an
adjoint equation of the same form as that of  Pontryagin's
paper for systems with additive noise. Fast forward, the work
\cite{Pen90} extended the theory to stochastic differential equations (SDEs) with nonconvex control domains and control-dependent diffusion terms, where
Peng
characterized
the second-order adjoint process by backward stochastic differential equations (BSDEs).
Subsequently, the corresponding controlled stochastic partial differential equations (SPDEs)
were considered; the adjoint states have been characterized either by operator-valued BSDEs \cite{FZ20,LZ15,LZ18} or by stochastic bilinear forms \cite{DM13,FHT12,FHT13}.

In the context of optimal control with partial observations, stochastic maximum principles (SMPs) have been extensively studied in finite-dimensional spaces. Main
contributions include
Fleming \cite{Fle68}, Bensoussan \cite{Ben83}, Haussmann \cite{Hau87}, Baras et al. \cite{BEK89}, Li and Tang \cite{LT95}, and Tang \cite{Tan98}. In particular, Li and Tang \cite{LT95} and Tang \cite{Tan98} established a general SMP for partially observed control problems in the nonconvex control domain without using the robust form of the Zakai equation and stochastic flow method in \cite{Fle68,Hau87,Ben83,BEK89}. They considered the state process and observations being
correlated.
In contrast,
partially observed optimal control problems for SPDEs, the literature is relatively scarce.
We refer to the work of Bensoussan and Viot \cite{BV75}, Ahmed \cite{Ahm96,Ahm14,Ahm19} and the references therein. In particular, Ahmed \cite{Ahm96} proved the existence of a partially observed optimal control for stochastic evolution equations driven by additive cylindrical Wiener noise and proved the SMP using the robust form of the Zakai equation. 
{For nonlinear filtering in infinite dimensional space; we refer to \cite{AFZ97} and references therein.}

This paper continues our recent work \cite{BCQ25} on partially observed optimal control for semilinear SPDEs in a convex control domain, where the state evolves in a Hilbert space and the observation is in the Euclidean space.
In contrast to \cite{BCQ25,Ahm96}, the first contribution
of the present work is that we
establish
a general SMP for a system where the state and the observation processes have correlated
noise, and the control
appears in
all coefficients of the state-observation system, under a nonconvex control domain.
This setting introduces substantial analytically challenges that go beyond those addressed in the convex case. To overcome these challenges,
we adopt the approach in \cite{LT95,Tan98} to bypass the robust form of the Zakai equation used in \cite{Ahm96}.
%
From there, the nonconvexity of the control domain requires us to study the second-order adjoint state. Motivated by the work of Stannat and Wessels \cite{SW21}, we characterize the second-order adjoint state as the solution of a function-valued backward SPDE;
which was inspired by the related
characterization
developed in \cite{SW21} for classical (fully observed) control problems for semilinear SPDE with nonconvex control domain. It addressed certain limitations in previous work \cite{FZ20,LZ15,LZ18,DM13,FHT12,FHT13},
removing
restrictive assumptions that preclude
general Nemytskii operators or quadratic cost functions. Our
contribution pushes this further by treating partially observed system with correlated noise and fully coupled control, filling a gap in the exiting literature for the partially observed
optimal
control problem in infinite dimensional space.


The setting of our problem naturally arises in many real-world applications. In large-scale distributed systems, the full state of the system is often either physically inaccessible or of such
high dimensionality that direct monitoring becomes prohibitively and/or costly. However, partial observations are typically available through another dynamic process (such as a monitoring or observer system) whose state is fully accessible. Based on this available
possibly incomplete and imperfect, information, the objective is to optimally control the main system to achieve desired outcomes.

To further illustrate the motivation for studying the partially observed control problem in our setting, consider an aquatic ecosystem as an example. Aquatic environments such as the Great Lakes, are complex systems in which water quality and marine life depend on a delicate balance of organic and inorganic agents. The concentration of these agents, such as pollutants and nutrients, within the water body can be modeled by a partial differential equation (PDE) subject to noise, as follows
\beq{aqu}\left\{\barray\ad
\frac{\partial C}{\partial t}-D \Dl C + (\nabla C)v = b(C,u)+N_C, \quad t\geq 0, \; \xi\in \CO \\
\ad C|_{\partial \CO} =0, \quad C(0,\xi)= C_0(\xi),\, \xi \in \CO,
\earray\right.\eeq
where $\CO$ is assumed to be an open, connected, bounded domain representing the aquatic body. Here $C$ is the concentration level of $m_1$ different organic and inorganic agents such as pollutants and nutrients. The third term on the left of the first equation \eqref{aqu} represents the transport of $C$ due to water movement, where $v$ is the given velocity as a function of space-time. The function $b$ represents the interactions between $m_2$ different control agents $u$ and $m_1$ different pollutants and nutrients $C$. The $N_C$ is the distributed noise that represents the additive effect such as land runoffs from surrounding farmlands, acid rain, accidental oil spills, summer cottages, etc.

The aquatic system supports diverse species, including micro-organism and fish populations, where the stock of fish is regulated by the Department of Fisheries. The biomass per unit volume of $m_3$ different species of population $y$ can be modeled by
\beq{ob-h}
dy= h(C,y)dt+ N_y,\quad y(0)=0, t\geq 0
\eeq
with
$h$ being
the growth vector and $N_y$ being
the noise vector. The
$h$ can be
the standard logistic growth function. The Department of Fisheries and Environments is interested in designing a control policy to promote marine life and water quality, such as the application of anti-pollutants, biological agents predating unwanted microorganisms, physical removal of solid water, algae, etc. They would like to minimize their cost to achieve their goals. We refer to \cite[pp. 1593-1597]{Ahm96} for the formulation of \eqref{aqu} as equation \eqref{spde} and the corresponding partially observed optimal controlled problem. Another example of electromagnetic interference can also be found in \cite{Ahm96}.


In addition to the
contribution of the maximum principle, in this paper,
we
develop
an efficient numerical algorithm to compute the partially observed optimal control. In the classical optimal control problem, there are two main frameworks to approximate optimal controls: (i) the method based on stochastic maximum principles, and (ii) the method based on the dynamical programming principles (DPPs). The well-established DPP-based methodology is the Markov chain approximation method; see \cite{KD01,Kus77,QWY22,YWQN21} for classical optimal control, and \cite{LTT24} for partially observed optimal control in finite dimensions. It is well known that this method suffers the curse of dimensionality resulting from solving high-dimensional HJB equations.
Note that there is a recent work using deep learning to solve nonlinear filtering problem \cite{QYZ23}, which is shown to be efficient; 
{see also classical splitting-up method in \cite{ZZCZC22}}. In this paper, however, we use an alternative, namely, a
SMP-based approach for the numerical solution.
 This method is seen 
 to be efficient,
 which
 computes
 the gradient from the stochastic maximum principle and utilizes the stochastic gradient descent algorithm to iteratively update the control; see \cite{GLTZZ17} for computing the classical optimal control. For the partially observed optimal control problem, nonlinear filtering plays a significant role. One needs to combine nonlinear filtering algorithms  to calculate the conditional distribution of the state given observations. We refer to \cite{ABYZ20,WWX24} for results in finite-dimensional spaces and to \cite{BCQ25} for results in infinite-dimensional spaces, together with the references therein for further details in this direction.

Let $(\Omega,\CF,\{\CF_t\}_{t\in [0,T]},\PP)$ be a filtered probability space satisfying the usual condition, on which are defined a cylindrical Wiener process $W$ in a Hilbert space $\Xi$ and a $d$-dimensional Brownian motions $Y$ in $\rr^d$, which are assumed to be independent of each other. Consider the following stochastic partial differential equations
\beq{spde}\left\{\barray
\!\! dx_t^u\ad\!\!\!\!= [\Dl x_t^u + b(x_t^u, u_t)]dt+ \sg(x_t^u,u_t)dW_t + \sum_{j=1}^d g^j(x_t^u, u_t) dB_t^j \;\text{ on } [0,T] \times L^2(\La),\\
\!\! x_0^u\ad\!\!\!\! = x_0, \quad \text{in } L^2(\La).
\earray\right.\eeq
Here $\La\in \rr$ is a bounded interval, $b$, $\sg$, and $g^j$ are Nemytskii operators. The control $u$ takes values in the domain $U$, which is a nonempty subset of a separable Banach space $\CU$. Furthermore, $W$ is a cylindrical Wiener process in a Hilbert space $\Xi$. In what follows,
we use the Einstein summation convention by suppressing
the summation symbol
(e.g., $\sum_{j=1}^d$) in \eqref{spde}.

Suppose that system state $x_t^u$ is not completely observable, instead, one observes a function of $x_t^u$ subject to some observation noise, giving rise to the following observation process:
\beq{ob}
d Y_t = h(x_t^u, u_t) dt + dB_t,
\eeq
where $h$ is a mapping from $L^2(\La) \times U$ to $\rr^d$.
We remark that unlike classical partially observable system where the observation noise is modeled by a Wiener process $B$, we consider $Y$ {\it a priori} as a Wiener process under original measure $\PP$ on probability space $(\Omega,\CF, \CF_t, \PP)$. Imagine if we consider $B$ being
the Wiener process in classical setting, applying the Girsanov theorem implies that $Y$ will be the Wiener process under a new measure. Here we define $Y$ as a Wiener process
at the beginning. This setting is standard in the literature and is appropriate from a modeling perspective that facilitates the
derivation of the SMP; see \cite[Remark 1]{WWX24}.


Putting \eqref{ob} in \eqref{spde}, we have
\beq{new-spde}\left\{\barray
dx_t^u \ad = [\Dl x_t^u + (b-g^j h^j )(x_t^u, u_t)]dt+ \sg(x_t^u, u_t)dW_t + g^j(x_t^u, u_t)dY_t^j \\
x_0^u \ad = x_0 \in L^2(\La).
\earray\right.\eeq
To deal with the partially observed optimal control nonlinear system, we use the measure transformation technique. Let us introduce
\bea\ad
\rho_t^u:=\exp\Big\{ \int_0^t h^{j,*}(x^u_s,u_s)dY_s^j - \half \int_0^t |h(x^u_s,u_s)|^2 ds \Big\},
\eea
where $h$ is a bounded continuous function from $L^2(\La) \to \rr^d$. Then it can be shown that $z^u$ satisfies the following stochastic differential equation
\beq{rho-eq}
\left\{\barray
d\rho_t^u \ad = \rho_t^u h^*(x_t^u,u_t)dY_t, \\
\rho_0^u \ad = 1.
\earray\right.
\eeq

Define a new measure $\QQ$ by
\bea\ad
\frac{d\QQ}{d\PP}\Big|_{\CF_T}:= \rho_T^u.
\eea
Since $\rho_t^u$ is an exponential
martingale, we have $d\QQ=\rho_t^u d\PP$ on $\CF_t$. Then the Girsanov theorem implies that $W$ and $B$
are independent cylindrical Wiener processes and finite-dimensional Brownian motions, respectively, in the new probability space $(\Omega,\CF,\CF_t,\QQ)$.

The cost functional we consider is given by
\beq{cost}\barray
J(u\cd)\ad =\EE^\QQ \Big[\int_0^T \int_\La \ell(x_t^u(\la), u_t)d\la dt + \int_\La m(x^u_T(\la))d\la  \Big] \\
\aad = \EE^\PP\Big[ \int_0^T  \int_\La \rho_t^u\, \ell(x_t^u(\la),u_t)d \la dt +  \int_\La \rho_T^u\, m(x_T^u(\la)) d\la \Big],
\earray\eeq
where $\EE^\QQ$ denotes the expectation with respect to probability space $(\Omega,\CF,\CF_t,\QQ)$ and $\EE:= \EE^\PP$ denotes the expectation with respect to probability space $(\Omega, \CF, \CF_t, \PP)$. 
{Define $\CF_t^Y:=\sg(Y_s; s\leq t)$.} Our partially observed control problem is to find $\wdh u$ to minimize the cost functional \eqref{cost} over $u \in \CU_{\text{ad}}$, that is,
\beq{contr-prob}
J(\wdh u)=
\min_{u\in \CU_{\text{ad}}} J(u\cd),
\eeq
where $\CU_{\text{ad}}$ is the admissible control set defined as
\bea\ad\!\!\!\!
\CU_{\text{ad}}:=\{u: [0,T] \times \Omega \to U: u \text{ is } \CF_t^Y\text{-adapted and } \sup_{t\in [0,T]} \EE[\|u_t\|_{\CU}^k] <\infty, \forall\, k\in \NN \}.
\eea

Let $\wdh u$ be an optimal control of the partially observed problem \eqref{contr-prob}, and let $\wdh x$ be the associated optimal state. The goal of this paper is to establish 
necessary conditions for the partially observed optimal control $\wdh u$ and to develop efficient numerical algorithms to compute $\wdh u$. To the best of
our knowledge, this is the first study on a general SMP for partially observed optimal control of SPDEs with correlated state-observation noise, where the control affects all diffusion coefficients in a nonconvex domain.
In addition, we also develop efficient numerical algorithms.

The rest of the
paper is organized as follows. In \secref{sec:ass}, we present the notation, assumptions, and main results, namely, Pontryagin's maximum principle and Peng's maximum principle under convex and nonconvex control domain, respectively. In \secref{sec:var}, we study the variational inequalities of the spike variation to prove Peng's maximum principle. In \secref{sec:adjoint}, we characterize the adjoint equation by backward SPDEs. \secref{sec:algo} provides numerical algorithms to solve the partially observed optimal control of SPDEs. Finally,
an numerical example is presented in \secref{sec:num} for demonstration.

\section{Notation and assumptions}\label{sec:ass}

Let $\La \subset \rr$ be a bounded interval. For $\ga>0$, let $H_0^\ga (\La):= W_0^{\ga,2}$ be the fractional Sobolev space of order $\ga$ with Dirichlet boundary conditions and $H^{-\ga}$ as its dual space. Let $T$ be a finite time horizon, which is fixed throughout the paper. We consider the state equation \eqref{spde} in the variational setting on the Gelfand triple:
\bea
H_0^1(\La) \hookrightarrow L^2(\La) \hookrightarrow H^{-1}(\La).
\eea
Denote by $\Dl: H_0^1(\La) \to H^{-1}(\La)$ as a realization of continuous operators.
For real separable Hilbert spaces $\CX,\CY$, we denote by $\CL_2(\CX,\CY)$ the space of Hilbert-Schmidt operators from $\CX$ to $\CY$, and by $\CL_1(\CX,\CY)$ the space of trace class operators from $\CX$ to $\CY$. If $\CX=\CY$, we denote $\CL_2(\CX):=\CL_2(\CX,\CX)$ and $\CL_1(\CX):=\CL_1(\CX,\CX)$. For an operator $\CT$, $\CT^*$ denotes its adjoint.
{For $x=(x^1,\dots,x^d)$ with $x^j \in L^2(\La)$, we will write $x\in (L^2(\La))^d$.}
We use $K$ as a generic constant that can vary from place to place.

To proceed, we make the following assumptions.

\begin{ass}\label{ass}
We assume
\begin{itemize}
    \item[{\rm(H1)}] The function $b:\rr\times U \to \rr$, $\sg: \rr \times U \to \CL_2(\Xi,\rr)$, $g: \rr \times U\to \CL(\rr^d,\rr)$, $h:\rr \times U \to \rr^d$, $\ell: \rr \times U \to \rr$, and $m:\rr \to \rr$ are continuous in $u$ and twice continuously differentiable in $x$. We assume $b_x,b_{xx},\sg_x, \sg_{xx}, g_{x}, g_{xx}, \ell_{xx}, m_{xx}$ are bounded, and $b,\sg, g, \ell_x, m_x$ are bounded by $K(1+|x|+\|u\|_{\CU}), x\in \rr, u\in U$.
    \item[{\rm(H2)}] The mapping $h: L^2(\La) \times U \to \rr^d$ is bounded with bounded  Gateaux derivative.
    \item[{\rm(H3)}] $x_0 \in L^2(\La)$.
\end{itemize}
\end{ass}
We define the Nemytskii operators in $L^2(\La)$ for the above coefficients. Taking $\sg$ as an example, we can define the Nemytskii operator
\bea
[\sg(x,u)y](\la) := \sg(x(\la), u)y, \quad \la \in \La,\; y \in \Xi.
\eea
Define the Hamiltonian $H:L^2(\La)\times U \times L^2(\La) \times \CL_2(\Xi; L^2(\La))\times (L^2(\La))^d \times \rr^d \to \rr$:
\beq{def-H}\barray
H(x,u,p,q_1,q_2 ,z_2) \ad :=\qv{p, b(x,u)}_{L^2(\La)} +\qv{q_1, \sg(x,u)}_{\CL_2(\Xi,L^2(\La))} \\
\aad\quad + \qv{q_2^j, g^j(x,u)}_{L^2(\La)} + L(x,u) + z_2^j h^j(x,u),
\earray\eeq
where $L: L^2(\La) \times U \to \rr$ is defined as $L(x,u):=\int_\La \ell(x(\la), u) d\la$. (Recall that we are using the  Einstein summation convention.)

Consider the following backward stochastic differential equations (BSDEs)
\beq{z}\left\{\barray
dz_t = -\ell(\wdh x_t, \wdh u_t) dt + z_{1,t} dW_t + z_{2,t}^j dB_t^j \\
z_T = m(\wdh x_T),
\earray\right.
\eeq
and backward stochastic partial differential equations (BSPDEs)
\beq{pq}\left\{\barray
dp_t  \ad = -[\Dl p_t +b_x(\wdh x_t, \wdh u_t) p_t+ \qv{\sg_x(\wdh x_t, \wdh u_t), q_{1,t}}_{\CL_2(\Xi;L^2(\La))}+ g_x^j(\wdh x_t, \wdh u_t)q_{2,t}^j \\
\aad \qquad + h_x^{j,*}(\wdh x_t, \wdh u_t)z_{2,t}^j - g^j(\wdh x_t, \wdh u_t)h_x^j(\wdh x_t, \wdh u_t) p_t
+ \ell_x(\wdh x_t, \wdh u_t)] dt \\
\aad \qquad + q_{1,t} dW_t + q_{2,t}^{j} dB_t^j
\\
p_T \ad = m_x(\wdh x_T).
\earray\right.\eeq
Under \assref{ass}, they can be shown to have a unique $\CF_t$-adapted strong solution $z, z_{2}^j \in L^2([0,T]\times \Omega; \rr)$ and $z_1\in L^2([0,T]\times \Omega; \CL_2(\Xi;\rr))$ to \eqref{z} (see \cite{LT95}) and a unique variational solution $(p,q_1,q_2), q_2 =(q_2^1,\dots, q_2^d)$ to \eqref{pq}, where
\bea
p\in L^2([0,T]\times \Omega; H_0^1(\La)) \cap L^2(\Omega; C([0,T];L^2(\La)))
\eea
and $
q_1 \in L^2([0,T]\times \Omega; \CL_2(\Xi; L^2(\La))),\; q_2^j \in L^2([0,T]\times \Omega;L^2(\La)),\; j=1,2,\dots, d$; see \cite{Ben83,SW21}.

\subsection{Main results}

\begin{thm}[Pontryagin's maximum principle]
\label{thm:pon-smp}
Let $(\wdh x, \wdh u)$ be an optimal pair of the partially observed optimal control problem \eqref{new-spde}--\eqref{contr-prob}. Suppose that the control domain is convex.  Then there exist $(z,z_1,z_2)$ and $(p,q_1,q_2)$ satisfying \eqref{z} and \eqref{pq}, respectively,  such that for any $v\in \CU_{\text{ad}}$ and a.e. $(t,\omega)\in [0,T]\times \Omega$,
\bea\ad
\bqv{\EE^\QQ[\nabla_u H(\wdh x_t, \wdh u_t, p_t, q_{1,t}, q_{2,t}, z_{2,t})| \CF_t^Y], v- \wdh u_t} \\
\aad = \Bqv{\EE^\QQ \Big[b_u^*(\wdh x_t, \wdh u_t) p_t + \sum_{i=1}^\infty [\sg(\wdh x_t, \wdh u_t)e_i]_u^* q_{1, t} e_i + \sum_{j=1}^d g_u^{j,*} (\wdh x_t, \wdh u_t) q_{2, t}^j \\
\aad \qquad + L_u(\wdh x_t, \wdh u_t) + \sum_{j=1}^d h_u^{j,*}(\wdh x_t,\wdh u_t) z_{2,t}^j \Big| \CF_t^Y \Big], v-\wdh u_t} \geq 0,
\eea
where $\{e_i\}$ is an orthonormal basis of $L^2(\La)$.
\end{thm}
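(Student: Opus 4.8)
The plan is to prove the inequality by a classical first-order (convex) variational analysis together with a duality identity for the adjoint equations, carried out in the Gelfand triple $H_0^1(\La)\hookrightarrow L^2(\La)\hookrightarrow H^{-1}(\La)$ and adapted, as in \cite{LT95,Tan98}, so as to keep track of the control dependence of the Girsanov density $\rho^u$.

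First I would fix $v\in\CU_{\text{ad}}$ and, using that $U$ is convex, set $u^\e:=\wdh u+\e(v-\wdh u)\in\CU_{\text{ad}}$ for $\e\in[0,1]$; write $\dl u:=v-\wdh u$ and let $x^\e,\rho^\e$ be the state and density associated with $u^\e$, with $\wdh\rho:=\rho^{\wdh u}$. Under \assref{ass}---in particular the boundedness of $b_x,\sg_x,g_x,h_x$, the linear growth of $b,\sg,g$, and the fact that the $\rho^\e$ are exponential martingales with all moments bounded uniformly in $\e$ because $h$ is bounded---energy estimates in the Gelfand triple give that the difference quotients $(x^\e-\wdh x)/\e$ and $(\rho^\e-\wdh\rho)/\e$ converge, as $\e\downarrow0$, to processes $\xi$ and $\eta$ solving the linearizations of \eqref{new-spde} and \eqref{rho-eq}: explicitly
\[
d\xi_t=\big[\Dl\xi_t+(b-g^jh^j)_x(\wdh x_t,\wdh u_t)\xi_t+(b-g^jh^j)_u(\wdh x_t,\wdh u_t)\dl u_t\big]dt+\big[\sg_x\xi_t+\sg_u\dl u_t\big]dW_t+\big[g^j_x\xi_t+g^j_u\dl u_t\big]dY^j_t,
\]
with $\xi_0=0$ (the $\Dl$-term supplies coercivity, so this linear SPDE is well posed in the variational sense), and $d\eta_t=\big[\eta_t h^*(\wdh x_t,\wdh u_t)+\wdh\rho_t\big(h_x(\wdh x_t,\wdh u_t)\xi_t+h_u(\wdh x_t,\wdh u_t)\dl u_t\big)^*\big]dY_t$ with $\eta_0=0$; a Gronwall argument then shows $\e\mapsto J(u^\e)$ is differentiable at $0^+$.

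Since $\wdh u$ is optimal, $J(u^\e)\ge J(\wdh u)$ forces $\frac{d}{d\e}J(u^\e)\big|_{\e=0^+}\ge0$, which, upon differentiating the $\EE^\PP$-form of \eqref{cost} and using the previous step, reads
\[
0\le\EE^\PP\!\Big[\int_0^T\!\big(\eta_tL(\wdh x_t,\wdh u_t)+\wdh\rho_t\qv{L_x(\wdh x_t,\wdh u_t),\xi_t}_{L^2(\La)}+\wdh\rho_t\qv{L_u(\wdh x_t,\wdh u_t),\dl u_t}\big)\,dt+\eta_TM(\wdh x_T)+\wdh\rho_T\qv{m_x(\wdh x_T),\xi_T}_{L^2(\La)}\Big],
\]
with $M(x):=\int_\La m(x(\la))\,d\la$. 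Next I would remove $\xi$ and $\eta$ by duality: viewing $(\wdh x,\wdh\rho)$ as the state of an augmented system on $L^2(\La)\times\rr$ driven by the genuine ($\PP$-)martingales $(W,Y)$, the BSDE \eqref{z} is precisely its adjoint in the $\rho$-direction (written against $dB$ via $dB_t=dY_t-h(\wdh x_t,\wdh u_t)\,dt$), and \eqref{pq}---after the normalization $p\mapsto\wdh\rho\,p$---is its adjoint in the $x$-direction. Applying It\^o's product rule for the $H_0^1/H^{-1}$ pairing to $\wdh\rho_t\qv{p_t,\xi_t}_{L^2(\La)}$ and the ordinary product rule to $z_t\eta_t$, taking $\EE^\PP$ so that stochastic integrals drop out, and inserting $p_T=m_x(\wdh x_T)$, $z_T=M(\wdh x_T)$, one checks that the $\Dl$-, $b_x$-, $\sg_x$-, $g_x$- and $h_x$-terms cancel against the homogeneous parts of the variational equations (this cancellation is the design of \eqref{z}--\eqref{pq}), the $\eta_TM$, $\wdh\rho_T\qv{m_x,\xi_T}$, $\eta_tL$ and $\wdh\rho_t\qv{L_x,\xi_t}$ contributions cancel, and only the $\dl u$-terms survive, reassembling---via \eqref{def-H}---into $\nabla_uH$. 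Using $\EE^\PP[\wdh\rho_tF_t]=\EE^\QQ[F_t]$ for $\CF_t$-measurable $F_t$ (valid since $\wdh\rho$ is a $\PP$-martingale and $d\QQ=\wdh\rho_T\,d\PP$), the optimality inequality becomes $0\le\EE^\QQ\big[\int_0^T\qv{\nabla_uH(\wdh x_t,\wdh u_t,p_t,q_{1,t},q_{2,t},z_{2,t}),\,v_t-\wdh u_t}\,dt\big]$; since $v_t-\wdh u_t$ is $\CF_t^Y$-measurable the pairing may be conditioned on $\CF_t^Y$, and if the resulting integrand were negative on a set of positive $dt\otimes\QQ$-measure, replacing $v$ by $v$ on that set and $\wdh u$ off it (still an element of $\CU_{\text{ad}}$) would contradict it---giving the asserted pointwise inequality, which upon expanding $\nabla_uH$ from \eqref{def-H} is the stated identity.

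The two genuinely delicate steps are the analytic ones. First, justifying the first-order expansion rigorously in the coupled infinite-dimensional system: $x^\e$ carries only polynomial moments (linear growth) while $\rho^\e$ contributes through all of its moments, so passing to the limit in the difference quotients and establishing the differentiability of $J$ require care in combining the variational-solution estimates with the exponential-martingale bounds on $\rho^\e$. Second, and more substantially, the It\^o product-rule bookkeeping for variational solutions: one must verify that, together with the control dependence of the change of measure (which injects additional drift terms into \eqref{pq} relative to the uncorrelated case), the adjoint equations \eqref{z} and \eqref{pq} leave behind exactly $\nabla_uH$ and no residual $\xi$-, $\eta$-, or cross terms. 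This is the point at which the argument goes beyond the convex, uncorrelated setting, and I expect it to be the main obstacle.
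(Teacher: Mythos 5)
Your proposal takes essentially the same route the paper intends: the paper omits the proof of Theorem~\ref{thm:pon-smp} entirely, deferring to \cite{Ben83,LT95,Tan98,FHT18} and \cite[Theorem 2.3]{BCQ25}, and your argument---convex perturbation $u^\e=\wdh u+\e(v-\wdh u)$, first variational equations for the state and for the Girsanov density carried as a second state component, duality against \eqref{z} and \eqref{pq} via It\^o's formula in the Gelfand triple, and localization through $\CF_t^Y$-measurable modifications of the control---is exactly the Li--Tang scheme those references implement for the correlated, partially observed setting. The one caveat is that, like the paper itself, you leave unexecuted the It\^o bookkeeping that shows the $\Dl$-, $b_x$-, $\sg_x$-, $g_x$-, $h_x$- and cross-variation terms cancel so that only $\qv{\nabla_uH,\,v_t-\wdh u_t}$ survives (in particular that the $-g^jh^j$ contributions from the $\QQ$-form drift recombine with the $q_2$--$Y$ covariation and the $z_2\eta$ duality to give the stated form with $z_{2,t}$, rather than a shifted argument, in the last slot of $H$); you correctly flag this as the crux rather than claiming it.
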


The proof of \thmref{thm:pon-smp} is omitted and can be found in
\cite{Ben83,LT95,Tan98,FHT18}; see also our work \cite[Theorem 2.3]{BCQ25}.
The rest of paper is
devoted to proving a more general SMP,  the Peng's maximum principle, for our partially observed optimal control problem.
We will use the following identification as in \cite{SW21};
\bea
L^2(\La, \CL_2(\Xi,\rr)) \ad \approxeq \CL_2(\Xi, L^2(\La)) \\
q(\la)(\xi) \ad \leftrightarrow q(\xi)(\la).
\eea

\begin{thm}[Peng's maximum principle]\label{thm:Peng-smp}
Let $(\wdh x, \wdh u)$ be an optimal pair of the partially observed control problem \eqref{new-spde}--\eqref{contr-prob}. Suppose that the control domain is non-convex. Then there exist adapted process $(z,z_1,z_2)$, $(p,q_1,q_2)$, and $(P,Q_1,Q_2)$ such that for all $v\in \CU_{\text{ad}}$ and almost every $(t,\omega)\in [0,T]\times\Omega$,
\bea\ad
\!\!\!\! \EE^\QQ \Big[H \big(\wdh x_t, v, p_t, q_{1,t}, q_{2,t}, z_{2,t}-\qv{p_t, g(\wdh x_t, \wdh u_t)}_{L^2(\La)} \big) \\
\aad \quad - H \big(\wdh x_t, \wdh u_t, p_t, q_{1,t}, q_{2,t}, z_{2,t}-\qv{p_t, g(\wdh x_t, \wdh u_t)}_{L^2(\La)} \big) \Big| \CF_t^Y \Big] \\
\aad\!\!\!\!\!\!\!\!\!+  \EE^\QQ \Big[\half \bqv{P_t(\la,\mu), \qv{\sg(\wdh x_t(\la), v)-\sg(\wdh x_t(\la), \wdh u_t), \sg(\wdh x_t(\mu), v)- \sg(\wdh x_t(\mu), \wdh u_t)}_{\CL_2(\Xi,\rr)}} \Big| \CF_t^Y\Big] \\
\aad\!\!\!\!\!\!\!\!\! + \EE^\QQ \Big[\half \bqv{P_t(\la,\mu), \qv{g^j(\wdh x_t(\la), v)-g^j(\wdh x_t(\la), \wdh u_t), \\
\aad \qquad \qquad \qquad g^j(\wdh x_t(\mu), v)- g^j(\wdh x_t(\mu), \wdh u_t)}_{\rr} } \big| \CF_t^Y \Big]\geq 0,
\eea
where $(z,z_1,z_2)$ and $(p,q_1,q_2)$ are solutions of \eqref{z} and \eqref{pq}, and $(P,Q_1,Q_2)$ are the solution of the second order adjoint equations given by \thmref{thm:PQ}.
\end{thm}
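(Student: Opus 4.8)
The plan is to follow the classical Peng-type spike variation argument, adapted to the partially observed setting via the measure change $\rho^u$, and combined with the function-valued second-order adjoint BSPDE of Stannat--Wessels \cite{SW21}. Fix the optimal pair $(\wdh x,\wdh u)$, pick $v\in\CU_{\text{ad}}$, a point of (approximate) continuity $\tau$ in $(0,T)$, and for $\e>0$ define the spike control $u_t^\e := v\,\indi_{E_\e}(t) + \wdh u_t\,\indi_{[0,T]\setminus E_\e}(t)$ on a set $E_\e\subset[0,T]$ of measure $\e$ (the usual Lebesgue-point set near $\tau$). Since $U$ need not be convex we cannot differentiate in the control direction; instead we expand the state $x^{u^\e}$ around $\wdh x$ to second order in $\e$. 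Introduce the first-order variation process $x^1$ (linear SPDE driven by the spike in $b,\sg,g$, with the $h$-correction appearing since we work with \eqref{new-spde}) of size $\sqrt\e$ in $L^2$, and the second-order variation process $x^2$ (linear SPDE with inhomogeneous terms quadratic in the spike plus second derivatives of the coefficients) of size $\e$. One also needs the expansion of $\rho^{u^\e}$: write $\rho^{u^\e}=\rho^{\wdh u}(1+\rho^1+\rho^2+\cdots)$ where $\rho^1,\rho^2$ solve linear SDEs driven by $h_x x^1$, $h_x x^2$ and the spike in $h$ itself. The key estimates are $\EE\sup_t\|x_t^{u^\e}-\wdh x_t - x_t^1 - x_t^2\|_{L^2}^2 = o(\e^2)$ and the analogous bound for $\rho$; these are standard a priori energy estimates for linear SPDEs under (H1)--(H3), using the Gelfand triple $H_0^1\hookrightarrow L^2\hookrightarrow H^{-1}$ and boundedness of $h$ and its derivative.

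Next I would expand the cost. Using \eqref{cost} in its $\EE^\PP$ form, $J(u^\e)-J(\wdh u)\geq 0$ becomes, after substituting the expansions of $x^{u^\e}$ and $\rho^{u^\e}$ and collecting terms,
\begin{equation*}
0\leq \EE^\PP\Big[\rho^{\wdh u}_T\big(m_x(\wdh x_T)(x_T^1+x_T^2)+\tfrac12 m_{xx}(\wdh x_T)(x_T^1)^{\otimes 2}\big) + \text{(running-cost analogues)} + \text{(terms with }\rho^1,\rho^2)\Big] + o(\e).
\end{equation*}
The cross terms $\rho^1\cdot(\text{cost derivatives})$ and the terms $z_{2}^j h^j$ are precisely what produce the $z_{2,t}-\qv{p_t,g(\wdh x_t,\wdh u_t)}_{L^2(\La)}$ combination in the statement: the BSDE \eqref{z} for $(z,z_1,z_2)$ handles the $\rho$-variation by the duality relation $\EE^\PP[\rho_T^{\wdh u} m(\wdh x_T)\rho^1_T] = \EE^\PP[\cdots]$ obtained from It\^o's formula applied to $z_t\rho_t^{\wdh u}\rho^1_t$, while the shift by $-\qv{p_t,g}$ comes from the $g^j dY^j = g^j dB^j + g^j h^j dt$ bookkeeping in passing between \eqref{spde} and \eqref{new-spde}. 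Then apply It\^o's product rule to $\qv{p_t,\rho_t^{\wdh u}(x_t^1+x_t^2)}_{L^2}$ using the BSPDE \eqref{pq}: the drift of $p$ has been chosen exactly so that all first-order-in-$x$ terms cancel against the dual of the variation equation's drift, leaving boundary terms $\qv{m_x(\wdh x_T),x_T^1+x_T^2}$ and a source term built from $H$ evaluated along the spike. Likewise apply It\^o to $\qv{P_t,\,\rho_t^{\wdh u}\,x_t^1(\la)x_t^1(\mu)}$ with the second-order adjoint BSPDE from \thmref{thm:PQ}: $P$ is designed so its equation absorbs the $x^2$-contributions and the quadratic-in-$x^1$ martingale brackets, leaving exactly the two quadratic forms in $\sg(\wdh x,v)-\sg(\wdh x,\wdh u)$ and $g^j(\wdh x,v)-g^j(\wdh x,\wdh u)$ that appear in the theorem. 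Assembling, dividing by $\e$, and letting $\e\downarrow 0$ with $\tau$ a Lebesgue point converts the time-integral over $E_\e$ into a pointwise inequality; finally, conditioning on $\CF_t^Y$ (legitimate because $v-\wdh u_t$ and the admissible perturbations are $\CF_t^Y$-adapted, and $\EE^\QQ[\cdot]=\EE^\PP[\rho_T^{\wdh u}\cdot]$) yields the stated variational inequality for a.e. $(t,\omega)$.

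The main obstacle is the correlation between the state noise and the observation, which couples the state SPDE, the filter weight $\rho^u$, and all three adjoint systems in a way that has no counterpart in the fully observed case of \cite{SW21} or the convex case of \cite{BCQ25}. Concretely, one must track the It\^o corrections arising from $dY$ appearing in both $dx^u$ (through $g^j dY^j$) and $d\rho^u$ (through $\rho^u h^{j,*}dY^j$), so the duality computations for $z\rho^{\wdh u}\rho^1$, $p\rho^{\wdh u}x^1$, and $P\rho^{\wdh u}(x^1)^{\otimes2}$ each generate extra bracket terms that must be shown to recombine into the specific shifted argument $z_{2,t}-\qv{p_t,g(\wdh x_t,\wdh u_t)}_{L^2(\La)}$ of the Hamiltonian; getting these cancellations to close is exactly what forces the precise form of the drifts in \eqref{pq} and in \thmref{thm:PQ}. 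A secondary technical point is justifying all the It\^o formulas in the variational framework, since $p$ and $P$ live in $H_0^1$ (resp.\ a function-of-two-variables Sobolev space) and the variation processes are only in $L^2$; this requires the usual regularization/approximation argument and the identification $L^2(\La,\CL_2(\Xi,\rr))\approxeq\CL_2(\Xi,L^2(\La))$ noted before \thmref{thm:Peng-smp}, together with the Nemytskii-operator structure from (H1) to control the nonlinear terms.
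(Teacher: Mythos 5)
Your overall route is the same as the paper's: spike variation, first- and second-order variational equations for both $x$ and $\rho$, duality with the BSDE \eqref{z} and the BSPDE \eqref{pq}, a function-valued second-order adjoint on the tensor product $L^2(\La^2)$, and finally division by $\e$ and conditioning on $\CF_t^Y$. Your first-order bookkeeping (It\^o applied to $z$ against $(\wdh\rho)^{-1}(\rho^{1,\e}+\rho^{2,\e})$, to $\qv{p_t,(\wdh\rho_t)^{-1}\rho_t^{1,\e}x_t^{1,\e}}$, and to $\qv{p_t,x_t^{1,\e}+x_t^{2,\e}}$, producing the shifted Hamiltonian argument $z_{2,t}-\qv{p_t,g(\wdh x_t,\wdh u_t)}_{L^2(\La)}$) matches the paper's Sections 3--4 up to working under $\PP$ with the weight $\wdh\rho$ rather than under $\QQ$ after dividing by it.

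There is, however, one concrete gap in your treatment of the second-order adjoint. You dismiss the regularity issue as ``the usual regularization/approximation argument,'' but the terminal pairing $\int_\La m_{xx}(\wdh x_T(\la))\,x_T^{1,\e}(\la)x_T^{1,\e}(\la)\,d\la=\qv{\pi^*(m_{xx}(\wdh x_T)),X_T^\e}$ evaluates $X_T^\e$ on the diagonal, and the trace map $\pi$ is continuous only on $H_0^1(\La^2)$, whereas $X^\e$ has only $L^2(\La^2)$ regularity at the fixed time $T$ (the $H_0^1(\La^2)$ regularity holds only in the time-integrated sense). So the terminal condition $P_T=\pi^*(m_{xx}(\wdh x_T))\in H^{-1}(\La^2)$ cannot be paired directly with $X_T^\e$, and It\^o's formula for $\qv{P_t,X_t^\e}$ does not apply as you state it. The paper resolves this by introducing a mollified terminal condition $m_{xx}^\eta\in L^2(\La^2)$ built from the heat kernel \eqref{m-eta}, running the duality with the mollified adjoint $(P^\eta,Q_1^\eta,Q_2^\eta)$ of \eqref{P-eta}, and then passing to the limit in a specific order: $\e\to 0$ first, then $\eta\to 0$ (the reverse order fails because $Q_1$ and $Q_2$ lack the regularity needed to control the relevant term in \eqref{CT-eta1}). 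Closing this double limit requires the additional compactness estimate $\EE^\QQ[\|x_T^{1,\e}\|_{H_0^\ga(\La)}^2]\leq K\e$ for $\ga\in(0,1/2)$ (Lemma \ref{lem:compact}), proved via the mild-solution representation and the analytic-semigroup bound $\|e^{t\Dl}f\|_{H_0^\ga(\La)}\leq Kt^{-\ga}\|f\|_{L^2(\La)}$; this yields strong $L^2(\La)$-convergence of $x_T^{1,\e}/\sqrt{\e}$ along a subsequence and hence the vanishing of the difference between the mollified and unmollified terminal terms. Without the mollification, the correct ordering of limits, and this compactness step, your argument does not close at the terminal time.
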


In what follows,
we will prove \thmref{thm:Peng-smp} in \secref{sec:var} and \secref{sec:adjoint}.

\section{Variational inequality}\label{sec:var}
\subsection{Spike variation} Since the control domain is non-convex, we adopt the spike variation argument as in Peng \cite{Pen90}. Namely, fix any $v\in \CU_{\text{ad}}$, $\tau \in (0,T)$, we define
\beq{ue}
u_t^\e:= \left\{\barray v, \quad \text{ if } \tau \leq t \leq \tau+\e, \\
\wdh u_t, \quad \text{ otherwise.}
\earray\right.
\eeq
Let $x_t^\e, \rho_t^\e$ be the solutions of \eqref{new-spde} and \eqref{rho-eq} corresponding to $u^\e$.

\subsection{Derivation of 
variational inequality} We look for the cost variation of $J(u^\e)-J(\wdh u)$. It involves variations of $x^\e - \wdh x$ and $\rho^\e-\wdh \rho$. To proceed, we introduce the following abbreviations:
$\dl f(t;u^\e_t):= f(\wdh x_t, u^\e_t) - f(\wdh x_t,\wdh u_t)$, and $\dl f(t,\la; u^\e_t):= f(\wdh x_t(\la), u_t^\e) - f(\wdh x_t(\la), \wdh u_t)$ for $f=b, g^j, h^j, (b-g^j h^j ), \ell, H$ as well as their derivatives in $x$. We will use $\dl f(t;u_t^\e)$ (resp. $\dl f(t,\la; u_t^\e)$) and $f(\wdh x_t, u_t^\e)-f(\wdh x_t, \wdh u_t)$ (resp. $f(\wdh x_t(\la), u_t^\e)- f(\wdh x_t(\la), \wdh u_t)$) interchangeably for the rest of the paper.

Let $x^{1,\e}$ be the solution of the SPDE
\beq{x1}
\left\{\barray
dx_t^{1,\e} \ad = [\Dl x_t^{1,\e} + (b-g^jh^j)_x(\wdh x_t, \wdh u_t) x_t^{1,\e}]dt \\
\aad\quad + [\sg_x(\wdh x_t, \wdh u_t) x_t^{1,\e} + \sg(\wdh x_t,u_t^\e)-\sg(\wdh x_t, \wdh u_t)]dW_t \\
\aad \quad + [g_x^j(\wdh x_t, \wdh u_t) x_t^{1,\e} + g^j(\wdh x_t, u_t^\e)-g^j(\wdh x_t, \wdh u_t)]dY_t^j, \\
x_0^{1,\e} \ad = 0,
\earray\right.
\eeq
and $\rho_t^{1,\e}$ be the solution of SDE:
\beq{rho1}\left\{\barray
d\rho_t^{1,\e} \ad=  \{ \rho^{1,\e}_t h^j(\wdh x_t, \wdh u_t)+\wdh\rho_t [h_x^j(\wdh x_t, \wdh u_t) x_t^{1,\e} + (h^j(\wdh x_t, u_t^\e)- h^j(\wdh x_t, \wdh u_t))]\} dY_t^j, \\
\rho_0^{1,\e}\ad =0. \earray\right.\eeq

Equations \eqref{x1} and \eqref{rho1} are the first variational equations of $x^\e-\wdh x$ and $\rho^\e-\wdh \rho$, respectively. The second variational equations are given by the solution of $(x_t^{2,\e}, \rho_t^{2,\e})$ satisfying:
\beq{x2}\left\{\barray
\!\!\! dx_t^{2,\e} \ad \!\!\!\!\!= [\Dl x_t^{2,\e} + (b-g^jh^j)_x(\wdh x_t, \wdh u_t) x_t^{2,\e}]dt \\\aad\!\!\!\!\!+ [(b-g^j h^j)(\wdh x_t, u_t^\e)- (b-g^j h^j)(\wdh x_t, \wdh u_t)+\half (b-g^j h^j)_{xx}(\wdh x_t, \wdh u_t) x_t^{1,\e}x_t^{1,\e}]dt  \\
\aad\!\!\!\!\! + \big[ \sg_x(\wdh x_t, \wdh u_t) x_t^{2,\e}+ (\sg_x(\wdh x_t, u_t^\e)-\sg_x(\wdh x_t, \wdh u_t)) x_t^{1,\e} + \half \sg_{xx}(\wdh x_t, \wdh u_t) x_t^{1,\e} x_t^{1,\e} \big]dW_t \\
\aad\!\!\!\!\! + \big[g_x^j(\wdh x_t, \wdh u_t)x_t^{2,\e}+ (g_x^j(\wdh x_t, u_t^\e)-g_x^j(\wdh x_t, \wdh u_t)) x_t^{1,\e}+\half g_{xx}^j(\wdh x_t, \wdh u_t) x_t^{1,\e}x_t^{1,\e}\big] dY_t^j, \\
x_0^{2,\e}\ad\!\!\!\!\! =0,
\earray\right.
\eeq
and
\beq{rho2}\left\{\barray
d\rho_t^{2,\e} \ad = [\rho_t^{2,\e} h^j(\wdh x_t, \wdh u_t)+ \wdh \rho_t h_x^j(\wdh x_t, \wdh u_t) x_t^{2,\e}+ (h^j(\wdh x_t, u_t^\e)- h^j(\wdh x_t, \wdh u_t))\rho_t^{1,\e} \\
\aad \quad + \wdh \rho_t (h_x^j(\wdh x_t, u_t^\e)-h_x^j(\wdh x_t, \wdh u_t)) x_t^{1,\e}+ \half \wdh \rho_t h_{xx}^j(\wdh x_t, \wdh u_t) x_t^{1,\e} x_t^{1,\e}\\
\aad\quad + \rho_t^{1,\e} h_x^j(\wdh x_t, u_t^\e)x_t^{1,\e}\big] d Y_t^j,  \\
\rho_0^{2,\e}\ad =0.
\earray\right.
\eeq

Compared to classical optimal control without partial observations, we have two pairs $(x^{1,\e},x^{2,\e})$ and $(\rho^{1,\e}, \rho^{2,\e})$ of variational equations. It is because our state process have two components $(x,\rho)$ after using the Girsanov transformation. Following \cite{SW21} and \cite{LT95}, one can establish the following estimates
\begin{lem}\label{var-est}
We have
\bea\disp
\sup_{t\in [0,T]} \EE^\QQ[\|x_t^{1,\e}\|_{L^2(\La)}^{2k} ] \ad \leq K \e^k, \quad  \sup_{t\in [0,T]} \EE^\QQ \big [\|x_t^{2,\e}\|_{L^2(\La)}^k \big] \leq K \e^k, \\
\disp \sup_{t\in [0,T]}\EE^\QQ [|\rho_t^{1,\e}|^{2k}] \ad \leq K \e^k, \quad \sup_{t\in [0,T]} \EE^\QQ [|\rho_t^{2,\e}|^k] \leq K \e^k,
\eea
and
\bea\disp
\sup_{t\in [0,T]}\EE^\QQ \big[\|x_t^\e- \wdh x_t - x_t^{1,\e}- x_t^{2,\e}\|_{L^2(\La)}^2 \big] \ad \leq o(\e^2), \\
\disp \sup_{t\in [0,T]} \EE^\QQ \big[|\rho_t^\e -\wdh \rho_t - \rho_t^{1,\e} - \rho_t^{2,\e}|^2 \big] \ad \leq o(\e^2).
\eea
\end{lem}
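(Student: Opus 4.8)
The plan is to derive the stated estimates by the standard route of \emph{a priori} energy bounds for linear SPDEs/SDEs, followed by a Taylor-expansion argument for the remainder terms. I would organize the proof around the two coupled systems $(x^{1,\e},\rho^{1,\e})$ and $(x^{2,\e},\rho^{2,\e})$ and treat the $L^2(\La)$-valued equations and the scalar $\rho$-equations in parallel, since under $\QQ$ the process $Y$ is a Brownian motion and all coefficients of \eqref{x1}--\eqref{rho2} are affine in the unknowns with bounded (by (H1), (H2)) coefficient functions.

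First I would record the moment bounds for the optimal state $\wdh x$ and for $\wdh\rho$ (and, when needed, for $x^\e$ and $\rho^\e$): under \assref{ass}, $\sup_t\EE^\QQ\|\wdh x_t\|_{L^2(\La)}^{k}<\infty$ for all $k$, and $\wdh\rho$ together with its inverse has finite moments of all orders since $h$ is bounded by (H2). Next, for the first-order system I would apply It\^o's formula to $\|x_t^{1,\e}\|_{L^2(\La)}^{2k}$ and to $|\rho_t^{1,\e}|^{2k}$ under $\QQ$. The key point is that the only inhomogeneous terms are supported on the interval $[\tau,\tau+\e]$: the forcing $\sg(\wdh x_t,u_t^\e)-\sg(\wdh x_t,\wdh u_t)$ in the $W$-integral, $g^j(\wdh x_t,u_t^\e)-g^j(\wdh x_t,\wdh u_t)$ in the $Y$-integral, and the analogous terms in \eqref{rho1} all carry an indicator $\indi_{[\tau,\tau+\e]}(t)$. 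Using the linear-growth bounds of $\sg,g,h$ in (H1)--(H2), Burkholder--Davis--Gundy, and Gr\"onwall's inequality, the contribution of these forcing terms over a window of length $\e$ produces exactly the factor $\e^k$ in $\sup_t\EE^\QQ\|x_t^{1,\e}\|_{L^2}^{2k}$ and $\sup_t\EE^\QQ|\rho_t^{1,\e}|^{2k}$; the coupling between $x^{1,\e}$ and $\rho^{1,\e}$ is handled by treating $\rho^{1,\e}$ after $x^{1,\e}$, since $x^{1,\e}$ does not depend on $\rho^{1,\e}$, while the $\rho^{1,\e}$-equation has $x^{1,\e}$ and $\wdh\rho$ as known inputs. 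The coercivity of $\Dl$ on the Gelfand triple $H_0^1\hookrightarrow L^2\hookrightarrow H^{-1}$ controls the $\Dl x^{1,\e}$ term (it only helps, contributing a negative $\|x^{1,\e}\|_{H_0^1}^2$ term).

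For the second-order system I would repeat the It\^o/BDG/Gr\"onwall scheme on $\|x_t^{2,\e}\|_{L^2(\La)}^{k}$ and $|\rho_t^{2,\e}|^{k}$, now treating the previously-bounded quantities $x^{1,\e},\rho^{1,\e},\wdh\rho$ as data. The new forcing terms are of two types: those carrying an indicator $\indi_{[\tau,\tau+\e]}$ times something of size $O(1)$ or $O(\|x^{1,\e}\|)$ (these contribute $O(\e)$ after integrating over the window, using H\"older to pair the indicator with the already-established $\e^{1/2}$-size of $x^{1,\e}$ in $L^2(\Omega)$), and the Nemytskii Hessian terms $\tfrac12(\cdot)_{xx}(\wdh x_t,\wdh u_t)x_t^{1,\e}x_t^{1,\e}$ which are supported on all of $[0,T]$ but are quadratic in $x^{1,\e}$; boundedness of $b_{xx},\sg_{xx},g_{xx},h_{xx}$ from (H1)--(H2) and the first-order bound $\EE^\QQ\|x^{1,\e}\|_{L^2}^{2k}\le K\e^k$ give each such term size $O(\e)$ in the relevant moment. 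Collecting, one gets $\sup_t\EE^\QQ\|x_t^{2,\e}\|_{L^2}^k\le K\e^k$ and the same for $\rho^{2,\e}$.

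Finally, for the Taylor-remainder estimates I would set $r_t^\e:=x_t^\e-\wdh x_t-x_t^{1,\e}-x_t^{2,\e}$ and $\vsg_t^\e:=\rho_t^\e-\wdh\rho_t-\rho_t^{1,\e}-\rho_t^{2,\e}$, write the equations they satisfy by subtracting \eqref{new-spde}, \eqref{rho-eq} and \eqref{x1}--\eqref{rho2}, and expand $b,\sg,g,h$ to second order with integral (Taylor) remainder around $\wdh x_t$. The resulting equation for $r^\e$ is again linear in $r^\e$ with bounded coefficients, forced by terms that are either (i) differences of first/second derivatives evaluated at $x^\e$ versus $\wdh x$ times lower-order variational quantities, controlled via the modulus of continuity of $b_{xx},\sg_{xx},\dots$ and the fact that $\|x^\e-\wdh x\|=O(\e^{1/2})$, or (ii) genuinely higher-order remainder terms, each shown to be $o(\e^2)$ in second moment using the already-established $\e$-orders and dominated convergence (the continuity in $u$ of the coefficients handles the $u_t^\e$-dependent pieces on the shrinking window). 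One more Gr\"onwall pass yields $\sup_t\EE^\QQ\|r_t^\e\|_{L^2}^2=o(\e^2)$, and then the same for $\vsg^\e$ treating $r^\e$ as data. I expect the main obstacle to be \textbf{the bookkeeping of the $o(\e^2)$ remainder}: one must carefully split each nonlinear increment into a ``frozen-coefficient'' part (which is absorbed into the variational equations by construction) and a ``coefficient-oscillation'' part, and argue the latter is $o(\e^2)$ rather than merely $O(\e^2)$, which is where the uniform continuity of the second derivatives and the strong $L^{2k}$-convergence $x^\e\to\wdh x$ are essential; the infinite-dimensional state and the cylindrical noise in $\sg$ only add the mild nuisance of working with Hilbert-Schmidt norms and the identification $L^2(\La,\CL_2(\Xi,\rr))\approxeq\CL_2(\Xi,L^2(\La))$, but do not change the structure of the argument.
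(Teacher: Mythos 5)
Your proposal is correct and follows essentially the same route as the paper, which simply defers these estimates to \cite[Lemmas 3.1, 3.2]{SW21} and \cite[Lemma 3.2]{LT95}; the cited arguments are precisely the It\^o--BDG--Gr\"onwall energy estimates on the localized forcing terms for the first- and second-order variational equations, followed by a second-order Taylor expansion with remainder for the $o(\e^2)$ bounds. Your identification of the main subtlety (showing the coefficient-oscillation terms are $o(\e^2)$ rather than $O(\e^2)$ via uniform continuity of the second derivatives) is exactly the delicate point in those references as well.
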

\begin{proof}
The proof of the above inequalities can be found in \cite[Lemma 3.1, 3.2]{SW21} and \cite[Lemma 3.2]{LT95}. Thus, details are omitted.
\end{proof}

We are now in a position to establish the variational inequality for the cost functional.
\begin{lem}\label{lem:var-cost}
We have
\beq{var-cost-P}\barray\ad
\EE^\PP \Big\{ \int_0^T \int_{\La} \wdh\rho_t \big[\ell_x(\wdh x_t(\la), \wdh u_t)(x_t^{1,\e}(\la)+ x_t^{2,\e}(\la)) +\half \ell_{xx}(\wdh x_t(\la), \wdh u_t
) x_{t}^{1,\e}(\la) x_t^{1,\e}(\la) \\
\aad \qquad \qquad\qquad + \ell(\wdh x_t(\la), u_t^\e)-\ell(\wdh x_t(\la), \wdh u_t) \big] d\la dt \\
\aad \qquad\quad + \int_{\La} \wdh\rho_T \Big[ m_x(\wdh x_T(\la))(x_T^{1,\e}(\la) + x_T^{2,\e}(\la)) +\half m_{xx}(\wdh x_T(\la))x_T^{1,\e}(\la) x_T^{1,\e}(\la) \Big]  d\la \Big\} \\
\aad + \EE^\PP \Big\{
\int_0^T \int_{\La} \Big[(\rho_t^{1,\e} + \rho_t^{2,\e}) \ell(\wdh x_t(\la), \wdh u_t) + \rho_t^{1,\e} \ell_x(\wdh x_t(\la), \wdh u_t) x_t^{1,\e}(\la ) \Big] d\la dt \\
\aad\qquad\quad  + \int_{\La} \big[ (\rho_T^{1,\e}+\rho_T^{2,\e}) m(\wdh x_T(\la)) + \rho_T^{1,\e} m_x(\wdh x_T(\la)) x_T^{1,\e}(\la) \big] d\la
\Big\} \leq o(\e).
\earray\eeq
Equivalently,
\beq{var-cost2}\barray\ad
\EE^\QQ \Big\{ \int_0^T \int_{\La} \big[\ell_x(\wdh x_t(\la), \wdh u_t)(x_t^{1,\e}(\la)+ x_t^{2,\e}(\la)) +\half \ell_{xx}(\wdh x_t(\la), \wdh u_t
) x_{t}^{1,\e}(\la) x_t^{1,\e}(\la) \\
\aad \qquad \qquad\qquad + \ell(\wdh x_t(\la), u_t^\e)-\ell(\wdh x_t(\la), \wdh u_t) \big] d\la dt \\
\aad \qquad\quad + \int_{\La}  \Big[ m_x(\wdh x_T(\la))(x_T^{1,\e}(\la) + x_T^{2,\e}(\la)) +\half m_{xx}(\wdh x_T(\la))x_T^{1,\e}(\la) x_T^{1,\e}(\la) \Big]  d\la \Big\} \\
\aad + \EE^\QQ \Big\{
\int_0^T \int_{\La} \wdh\rho^{-1}_t \big[(\rho_t^{1,\e} + \rho_t^{2,\e}) \ell(\wdh x_t(\la), \wdh u_t) + \rho_t^{1,\e} \ell_x(\wdh x_t(\la), \wdh u_t) x_t^{1,\e}(\la ) \big] d\la dt \\
\aad\qquad\quad  + \int_{\La} \wdh\rho^{-1}_T\big[ (\rho_T^{1,\e}+\rho_T^{2,\e}) m(\wdh x_T(\la)) + \rho_T^{1,\e} m_x(\wdh x_T(\la)) x_T^{1,\e}(\la) \big] d\la
\Big\} \\
\aad := \EE_1^\QQ +\EE_2^\QQ \leq o(\e).
\earray\eeq
\end{lem}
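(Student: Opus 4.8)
The plan is to expand the cost difference $J(u^\e) - J(\wdh u)$ using the $\PP$-representation in \eqref{cost}, namely
$J(u\cd) = \EE^\PP[\int_0^T \int_\La \rho_t^u \ell(x_t^u(\la),u_t)\,d\la\,dt + \int_\La \rho_T^u m(x_T^u(\la))\,d\la]$,
and to perform a second-order Taylor expansion in the pair $(x^\e,\rho^\e)$ around $(\wdh x,\wdh\rho)$. Write $x^\e = \wdh x + x^{1,\e} + x^{2,\e} + r_x^\e$ and $\rho^\e = \wdh\rho + \rho^{1,\e} + \rho^{2,\e} + r_\rho^\e$, where by \lemref{var-est} the remainders satisfy $\sup_t \EE^\QQ[\|r_x^\e\|_{L^2(\La)}^2] = o(\e^2)$ and $\sup_t \EE^\QQ[|r_\rho^\e|^2] = o(\e^2)$. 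The integrand $\rho\,\ell(x(\la),u)$ is bilinear in $\rho$ and (after Taylor expansion) quadratic in $x$; I would expand it keeping all terms up to order $\e$ and collecting everything of order $o(\e)$ into an error term. Using that $x^{1,\e}$ is $O(\sqrt\e)$ and $x^{2,\e}$, $\rho^{1,\e}$, $\rho^{2,\e}$ are $O(\sqrt\e)$ in the appropriate $L^{2k}$ norms (again \lemref{var-est}), the surviving first- and second-order terms are exactly the ones displayed in \eqref{var-cost-P}: the $\wdh\rho$-weighted terms $\ell_x(x^{1,\e}+x^{2,\e}) + \tfrac12 \ell_{xx} x^{1,\e}x^{1,\e} + (\ell(\wdh x_t,u_t^\e) - \ell(\wdh x_t,\wdh u_t))$ (the last coming from expanding in $u$ along the spike), together with the cross terms $(\rho^{1,\e}+\rho^{2,\e})\ell + \rho^{1,\e}\ell_x x^{1,\e}$, and the analogous terms at the terminal time from $\rho_T m(x_T)$.

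The inequality $J(u^\e) - J(\wdh u) \ge 0$ holds for every $\e$ by optimality of $\wdh u$. Dividing nothing out — just keeping the inequality at the level $J(u^\e) - J(\wdh u) = (\text{LHS of }\eqref{var-cost-P}) + o(\e) \ge 0$ — gives exactly \eqref{var-cost-P}. (The precise bookkeeping of which products of $x^{i,\e}$, $\rho^{i,\e}$ are genuinely $o(\e)$, e.g. $\rho^{2,\e} \ell_x x^{1,\e}$ is $O(\e^{3/2})$, and the replacement of $\ell_x(\wdh x_t, u_t^\e) x^{1,\e}$ by $\ell_x(\wdh x_t,\wdh u_t) x^{1,\e}$ modulo $o(\e)$ using $|\mathrm{supp}(u^\e - \wdh u)| = \e$ and Cauchy--Schwarz, follows the arguments of \cite[proof of Lemma 3.3]{SW21} and \cite[Lemma 3.2]{LT95}.) The passage to the equivalent form \eqref{var-cost2} is then purely a change of measure: since $d\QQ = \rho_t^u\,d\PP$ on $\CF_t$ and, along the expansion, the optimal density is $\wdh\rho_t$, every $\EE^\PP$-term in \eqref{var-cost-P} that already carries a factor $\wdh\rho_t$ (resp. $\wdh\rho_T$) becomes the corresponding $\EE^\QQ$-term without that factor, while the terms carrying $\rho^{1,\e}$ or $\rho^{2,\e}$ pick up a compensating $\wdh\rho_t^{-1}$ (resp. $\wdh\rho_T^{-1}$); one must check $\wdh\rho_t^{-1} \in L^k$ for all $k$, which is standard since $h$ is bounded (H2), so $\wdh\rho$ and $\wdh\rho^{-1}$ have moments of all orders.

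The main obstacle is the careful estimation needed to justify that the discarded terms are genuinely $o(\e)$ in the presence of the extra $\rho$-variational components, which have no analogue in the fully observed case of \cite{SW21}: several cross terms (products of an $x$-variation with a $\rho$-variation, with coefficients evaluated at $u_t^\e$ on the spike) must be shown to be $o(\e)$ by combining the $L^{2k}$-estimates of \lemref{var-est} with the fact that the spike interval has length $\e$, and with Hölder's inequality under $\QQ$ (equivalently under $\PP$ with $\wdh\rho$-weights). Once the bookkeeping for these mixed terms is in place, the rest is a routine Taylor expansion and a measure change.
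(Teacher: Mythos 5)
Your proposal follows essentially the same route as the paper, which in fact omits the proof entirely and defers to \cite[Lemma 3.4]{SW21} and \cite[Lemma 3.3]{LT95}: a second-order Taylor expansion of the $\PP$-form of the cost functional around $(\wdh x,\wdh\rho)$, with the remainders controlled by \lemref{var-est}, followed by the optimality inequality and the change of measure $d\QQ=\wdh\rho_t\,d\PP$. Your observation that $\wdh\rho^{-1}$ has moments of all orders because $h$ is bounded is exactly what makes the passage to \eqref{var-cost2} legitimate, and the bookkeeping of the mixed $x$--$\rho$ cross terms (which have no analogue in the fully observed setting of \cite{SW21}) is correctly identified as the only genuinely new ingredient.

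One point deserves care. As you run the argument, optimality gives $J(u^\e)-J(\wdh u)\ge 0$ and the expansion gives $J(u^\e)-J(\wdh u)=\mathrm{LHS}+o(\e)$, hence $\mathrm{LHS}+o(\e)\ge 0$, i.e.\ a \emph{lower} bound on the left-hand side, not the upper bound ``$\le o(\e)$'' as printed in \eqref{var-cost-P}--\eqref{var-cost2}. So your derivation does not ``give exactly \eqref{var-cost-P}'' as literally stated. The direction you obtain is, however, the one the paper actually uses downstream: the proof of \thmref{thm:Peng-smp} divides $\EE_4^\QQ$ from \eqref{CT-eta1} by $\e$ and concludes that the limiting Hamiltonian expression is $\ge 0$, which requires $\EE_1^\QQ+\EE_2^\QQ\ge o(\e)$ rather than $\le o(\e)$. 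The printed inequality sign therefore appears to be a slip; in your write-up you should state explicitly which direction the expansion yields and note that it is the one consistent with the conclusion of \thmref{thm:Peng-smp}, rather than silently matching the displayed ``$\le$''.
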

\begin{proof}
The proof of this lemma can be obtained from \cite[Lemma 3.4]{SW21} and \cite[Lemma 3.3]{LT95}. The details are omitted.
\end{proof}

\section{Adjoint representation of variational inequality}\label{sec:adjoint}

\subsection{Rewrite of $\EE_1^\QQ+\EE_2^\QQ$} The It\^{o} formula implies that the solution of \eqref{rho1} and \eqref{rho2} have the following explicit expressions:
\beq{rho1-int}
\rho_t^{1,\e}= \wdh \rho_t
\int_0^t [h_x^j(\wdh x_s, \wdh u_s) x_s^{1,\e} + (h^j(\wdh x_s, u_s^\e)- h^j(\wdh x_s, \wdh u_s))]dB_s^j,
\eeq
and
\beq{rho2-int}\barray
\rho_t^{2,\e} \ad = \wdh \rho_t \int_0^t [h_x^j(\wdh x_s, \wdh u_s) x_s^{2,\e} + \dl h_x^j(s;u_s^\e) x_s^{1,\e} + \half h_{xx}^j(\wdh x_s, \wdh u_s) x_s^{1,\e} x_s^{1,\e}] \\
\aad\quad +  \rho_s^{-1} \rho_s^{1,\e}[h^j(\wdh x_s, u_s^\e)- h^j(\wdh x_s, \wdh u_s)+ h_x^j(\wdh x_s, \wdh u_s)x_s^{1,\e}] dB_s^j.
\earray\eeq
Set
\beq{def-Ga}
\Ga_t:= (\wdh \rho_t)^{-1} (\rho_t^{1,\e}+ \rho_t^{2,\e}).
\eeq
Then \eqref{rho1-int} and \eqref{rho2-int} implies that
\bea\ad
\Ga_t= \int_0^t \ga_s^j dB_s^j,
\eea
where $\ga_s:=(\ga_s^1,\dots, \ga_s^d)$ with $\ga_s^j$ defined as
\beq{ga}\barray
\ga_s^j \ad := h_x^j(\wdh x_s,\wdh u_s)(x_s^{1,\e}+x_s^{2,\e})+ [h^j(\wdh x_s, u_s^\e) - h^j(\wdh x_s, \wdh u_s)] \\
\aad\quad  + \dl h_x^j(s; u_s^\e) x_s^{1,\e} + \half h_{xx}^j(\wdh x_s, \wdh u_s) x_s^{1,\e} x_s^{1,\e} \\
\aad\quad + \wdh \rho_s^{-1} \rho_s^{1,\e} [h^j(\wdh x_s, u_s^\e) - h^j(\wdh x_s, \wdh x_s)+ h_x^j(\wdh x_s, \wdh u_s) x_s^{1,\e}].
\earray\eeq
We note that
\beq{Ga-0T}
\Ga_0 =0 , \quad \Ga_T =(\wdh \rho_T)^{-1}(\rho_T^{1,\e}+ \rho_T^{2,\e}).
\eeq
Applying the It\^{o} formula to $z_t \Ga_t$, we have
\beq{zGa}
\EE^\QQ z_T \Ga_T = \EE^\QQ z_0 \Ga_0 + \EE^\QQ \int_0^T \big[ \qv{z_{2,s}, \ga_s}_{\rr^d} - \Ga_s \ell(\wdh x_s, \wdh u_s) \big]ds.
\eeq
Then plugging \eqref{def-Ga}, \eqref{ga}, and \eqref{Ga-0T} into \eqref{zGa}, we obtain
\beq{E2Q-1}\barray \ad\!\!\!\!
\EE^\QQ\Big\{\int_0^T \int_\La (\wdh \rho_t)^{-1}[\rho_t^{1,\e}+\rho_t^{2,\e}] \ell(\wdh x_t(\la), \wdh u_t)d\la dt + \int_\La (\wdh \rho_T)^{-1}[\rho_T^{1,\e} + \rho_T^{2,\e}] m(\wdh x_T(\la)) d\la \Big\} \\
\aad\!\!\!\! = \EE^\QQ \int_0^T \qv{z_{2,s}^j, h_x^j(\wdh x_s, \wdh u_s)(x_s^{1,\e}+x_s^{2,\e})+ [h^j(\wdh x_s, u_s^\e)- h^j(\wdh x_s, \wdh u_s)] \\
\aad\!\!\!\! \qquad \qquad\quad  + \half h_{xx}^j(\wdh x_s, \wdh u_s) x_s^{1,\e} x_s^{1,\e}}_{\rr} ds \\
\aad\!\!\!\! \quad + \EE^\QQ \int_0^T \qv{z_{2,s}^j, h_x^j(\wdh x_s, \wdh u_s)[(\wdh\rho_s)^{-1} \rho_s^{1,\e} x_s^{1,\e}]}_{\rr} dt +o(\e).
\earray\eeq
In the above equality, from \lemref{var-est}, we used the fact that
\bea\ad
\EE^\QQ \int_0^T  \qv{z_{2,s}^j,  \dl h_x^j(s; u_s^\e) x_s^{1,\e} + (\wdh{\rho_s})^{-1} \rho_s^{1,\e}[h^j(\wdh x_s, u_s^\e)- h^j(\wdh x_s, \wdh u_s)]}_{\rr} ds = o(\e).
\eea

Consider remaining terms in \eqref{var-cost2} of $\EE_2^\QQ$, we need to study $[(\wdh \rho_s)^{-1} \rho_s^{1,\e}]x_s^{1,\e}$. Define $
X_s^{1,\e}:=[(\wdh \rho_s)^{-1} \rho_s^{1,\e}] x_s^{1,\e}$. From \eqref{rho1-int}, we have
\bea
X_t^{1,\e} \ad = x_t^{1,\e} \int_0^t [h_x^j(\wdh x_s, \wdh u_s) x_s^{1,\e} + h^j(\wdh x_s, u_s^\e)- h^j(\wdh x_s, \wdh u_s)]dB_s^j.
\eea
The It\^{o} formula for variational solutions for SPDEs in \cite[Theorem 4.2.5]{LR15} implies that the dynamics of $X_t^{1,\e}$ follows the equation:
\bea
d X_t^{1,\e}\ad = [\Dl X_t^{1,\e}+(b-g^j h^j)_x X_t^{1,\e}]dt \\
\aad +\Big\{g_x^j(\wdh x_t, \wdh u_t) h^j(\wdh x_t, \wdh u_t) X_t^{1,\e} + \wdh \rho_t^{-1} \rho_{t}^{1,\e} \dl g^j(t;u^\e_t) h^j(\wdh x_t, \wdh u_t) \Big\} dt \\
\aad + \Big\{ \big(g_x^j(\wdh x_t, \wdh u_t) x_t^{1,\e} +\dl g^j(t;u_t^\e)\big) \big(h_x^j(\wdh x_t, \wdh u_t) x_t^{1,\e} + \dl h^j(t; u_t^\e) \big) \Big\}dt  \\
\aad+ \Big\{ \sg_x(\wdh x_t, \wdh u_t) X_t^{1, \e} + \wdh \rho_t^{-1} \rho_t^{1,\e} \dl \sg(t; u_t^\e)  \Big\} dW_s \\
\aad+ \Big\{ \big(g_x^j( \wdh x_t, \wdh u_t) X_t^{1,\e}+ \wdh \rho_t^{-1} \rho_t^{1,\e} \dl g^j(t; u_t^\e) \big) \\
\aad \qquad + x_t^{1,\e} \big(h_x^j(\wdh x_t, \wdh u_t)x_t^{1,\e} +\dl h^j(t;u_t^\e)\big) \Big\} d B_s^j.
\eea

We then apply the It\^{o} formula to process $\qv{p_t,X_t^{1,\e}}_{L^2(\La)}$. It gives
\bea\ad
d \qv{p_t, X_t^{1,\e}}_{L^2(\La)}  \\
\ad = \qv{p_t, dX_t^{1,\e}}_{L^2(\La)}+ \qv{X_t^{1,\e}, dp_t}_{L^2(\La)} + d\qv{p,X^{1,\e}}_t \\
\ad = \bqv{p_t, \Dl X_t^{1,\e}+ (b-g^j h^j)_x X_t^{1,\e}}_{L^2(\La)} dt\\
\aad + \bqv{p_t, g_x^j(\wdh x_t, \wdh u_t) h^j(\wdh x_t, \wdh u_t) X_t^{1,\e}+ \wdh \rho_t^{-1}\rho_t^{1,\e} \dl g^j(t; u_t^\e) h^j(\wdh x_t, \wdh u_t)}_{L^2(\La)} dt \\
\aad + \bqv{p_t, (g_x^j(\wdh x_t, \wdh u_t) x_t^{1,\e} + \dl g^j(t;u_t^\e)) (h_x^j(\wdh x_t, \wdh u_t) x_t^{1,\e} +\dl h^j(t; u_t^\e))}_{L^2(\La)} dt \\
\aad + \bqv{p_t, (\sg_x(\wdh x_t, \wdh u_t) X_t^{1,\e}+ \wdh \rho_t^{-1} \rho_t^{1,\e} \dl \sg(t;u_t^\e) ) dW_t}_{L^2(\La)} \\
\aad  + \bqv{p_t, [g_x^j(\wdh x_t, \wdh u_t) X_t^{1,\e}+ \wdh \rho_t^{-1} \rho_t^{1,\e} \dl g^j(t;u^\e_t) \\
\aad \qquad\quad  + x_t^{1,\e} h_x^j(\wdh x_t, \wdh u_t) x_t^{1,\e} + x_t^{1,\e} \dl h^j(t; u^\e_t)] dB_t^j}_{L^2(\La)} \\
\aad + \bqv{X_t^{1,\e}, -\Dl p_t - b_x(\wdh x_t, \wdh u_t) p_t - \qv{\sg_x(\wdh x_t, \wdh u_t), q_{1,t}}_{\CL_2(\Xi; L^2(\La))} - g_x^j(\wdh x_t, \wdh u_t)q^j_{2,t} \\
\aad \qquad \qquad - h_x^{j,*}(\wdh x_t, \wdh u_t) z_{2,t}^j + g^j(\wdh x_t, \wdh u_t) h_x^j(\wdh x_t, \wdh u_t) p_t  - \ell_x(\wdh x_t, \wdh u_t)}_{L^2(\La)} dt \\
\aad + \qv{X_t^{1,\e}, q_{1,t} dW_t}_{L^2(\La)} + \qv{X_t^{1,\e}, q_{2,t} dB_t}_{L^2(\La)} \\
\aad + \qv{q_{1,t}, \sg_x(\wdh x_t, \wdh u_t) X_t^{1,\e} + \wdh \rho_t^{-1} \rho_t^{1,\e} \dl \sg(t; u^\e_t) }_{\CL_2(\Xi; L^2(\La))} dt  \\
\aad + \bqv{q_{2,t}^j, g_x^j(\wdh x_t, \wdh u_t) X_t^{1,\e} + \wdh \rho_t^{-1} \rho_t^{1,\e} \dl g^j(t; u_t^\e) \\
\aad\qquad \quad  + x_t^{1,\e} h_x^j(\wdh x_t, \wdh u_t) x_t^{1,\e} + x_t^{1,\e} \dl h^j(t; u_t^\e)}_{L^2(\La)}dt.
\eea
Taking integration and expectation, and considering the initial and terminal conditions, we obtain
\bea\ad\!\!\!\!
\EE^\QQ \qv{m_x(\wdh x_T), X_T^{1,\e}}_{L^2(\La)} \\
\ad\!\!\!\! = \EE^\QQ \qv{p_T, X_T^{1,\e}}_{L^2(\La)} \\
\ad \!\!\!\!= - \EE^\QQ\int_0^T \int_\La \big[z_{2,t}^j h_x^j(\wdh x_t(\la), \wdh u_t(\la))+\ell_x(\wdh x_t(\la), \wdh u_t(\la))\big] X_t^{1,\e}(\la) d\la dt \\
\aad \!\!\!\!+\EE^\QQ \int_0^T \qv{p_t,  \wdh \rho_t^{-1} \rho_t^{1,\e} \dl g^j (\wdh x_t, \wdh u_t) h^j(\wdh x_t, \wdh u_t)}_{L^2(\La)}dt \\
\aad\!\!\!\!+ \EE^\QQ \int_0^T \qv{p_t, (g_x^j(\wdh x_t, \wdh u_t) x_t^{1,\e}+\dl g^j(t; u^\e_t))(h_x^j(\wdh  x_t, \wdh u_t) x_t^{1,\e}+ \dl h^j(t; u^\e_t))}_{L^2(\La)}dt  \\
\aad\!\!\!\!+ \EE^\QQ \int_0^T \qv{q_{1,t},\wdh \rho_t^{-1} \rho_t^{1,\e} \dl \sg(t; u^\e_t)}_{\CL_2(\Xi; L^2(\La))} dt \\
\aad\!\!\!\!+ \EE^\QQ \int_0^T \qv{q_{2,t}^j, \wdh \rho_t^{-1} \rho_t^{1,\e} \dl g^j(t; u^\e_t) + [h_x^j(\wdh x_t, \wdh u_t) x_t^{1,\e}] x_t^{1,\e} + \dl h^j(t; u^\e_t) x_t^{1,\e} }_{L^2(\La)} dt.
\eea
From \lemref{var-est}, we further note
that
\bea\ad
\EE^\QQ  \int_0^T \qv{p_t, \wdh \rho_t^{-1}\rho_t^{1,\e} \dl g^j(t; u^\e_t) h^j(\wdh x_t, \wdh u_t)}_{L^2(\La)}dt =o(\e), \\
\ad\EE^\QQ \int_0^T \qv{p_t, g_x^j(t) x_t^{1,\e} \dl h(t;u^\e_t)+ h_x^j(t)x_t^{1,\e} \dl g^j(t;u_t^\e)\\
\aad \qquad\qquad\quad +\dl g^j(t;u_t^\e) \dl h^j(t;u^\e_t)}_{L^2(\La)} dt = o(\e), \\
\ad \EE^\QQ \int_0^T \qv{q_{1,t}, \wdh \rho_t^{-1} \rho_t^{1,\e} \dl \sg(t; u_t^\e)}_{\CL_2(\Xi; L^2(\La))}dt = o(\e), \\
\ad \EE^\QQ \int_0^T  \qv{q_{2,t}^j, \wdh \rho_t^{-1} \rho_t^{1,\e} \dl g^j(t;u_t^\e) + x_t^{1,\e} \dl h^j(t; u_t^\e)}_{L^2(\La)} dt = o(\e),
\eea
where $g_x^j(t):=g_x^j(\wdh x_t, \wdh u_t)$ and $h_x^j(t):=h_x^j(\wdh x_t, \wdh u_t)$. Consequently, we conclude that
\beq{E2Q-2}\barray\ad
\EE^\QQ \int_{\La} m_x(\wdh x_T(\la))X_T^{1,\e}(\La) d\la \\
\aad\quad + \EE^\QQ \int_0^T \int_\La (\ell_x (\wdh x_t(\la), \wdh u_t(\la))+ z_{2,t}^j h_x^j(\wdh x_t(\la), \wdh u_t(\la)) X_t^{1,\e})d\la dt \\
\aad = \EE^\QQ \int_0^T \qv{p_t, (g_x^j(\wdh x_t, \wdh u_t) x_t^{1,\e})(h_x^j(\wdh x_t, \wdh u_t) x_t^{1,\e})}_{L^2(\La)} dt \\
\aad \quad + \EE^\QQ \int_0^T \qv{q_{2,t}^j,  x_t^{1,\e} [h_x^j(\wdh x_t, \wdh u_t) x_t^{1,\e}]}_{L^2(\La)} dt +o(\e).
\earray\eeq

Therefore, \eqref{E2Q-1} and \eqref{E2Q-2} together with
the definition of $\EE_2^\QQ$ in \eqref{var-cost2} yield
that
\beq{EQ2}\barray
\EE_2^\QQ
\ad = \EE^\QQ \int_0^T\int_\La \wdh \rho_t^{-1}(\rho_t^{1,\e}+\rho_t^{2,\e})\ell(\wdh x_t(\la), \wdh u_t) d\la dt+ \wdh \rho_T^{-1}(\rho_T^{1,\e}+\rho_T^{1,\e}) m(\wdh x_T(\la))d\la \\
\aad + \EE^\QQ \int_0^T \int_\La \ell_x(\wdh x_t(\la), \wdh u_t) X_t^{1,\e}(\la) d\la dt +\int_\La  m_x(\wdh x_T(\la)) X_T^{1,\e}(\la)d\la  \\
\ad = \EE^\QQ \int_0^T \int_{\La} z_{2,t}^j h_x^j(\wdh x_t(\la), \wdh u_t) (x_t^{1,\e}(\la)+x_t^{2,\e}(\la)) \\
\aad \qquad\qquad\qquad  + z_{2,t}^j [h^j(\wdh x_t(\la), \wdh u_t) -h^j(\wdh x_t(\la), \wdh u_t)] d\la dt \\
\aad +\EE^\QQ \int_0^T \int_{\La} \half z_{2,t}^j h_{xx}^j(\wdh x_t(\la), \wdh u_t(\la)) x_t^{1,\e}(\la) x_t^{1,\e}(\la) d\la dt \\
\aad + \EE^\QQ \int_0^T \qv{p_t, [g_x^j(\wdh x_t,\wdh u_t) x_t^{1,\e}][h_x^j(\wdh x_t, \wdh u_t) x_t^{1,\e}]}_{L^2(\La)}dt \\
\aad + \EE^\QQ \int_0^T \qv{q_{2,t}^j, [h_x^j(\wdh x_t, \wdh u_t) x_t^{1,\e}] x_t^{1,\e}}_{L^2(\La)} dt + o(\e).
\earray\eeq

Combining $\EE_1^\QQ$ in \eqref{var-cost2} and the above equality,
we have
\bea \EE^\QQ_{3}:= \ad
\EE^\QQ \int_0^T \int_\La [\ell_x(\wdh x_t(\la), \wdh u_t)+z_{2,t} h_x^j(\wdh x_t(\la), \wdh u_t)](x_t^{1,\e}(\la)+x_t^{2,\e}(\la))d\la dt \\
\aad + \EE\int_\La m_x(\wdh x_T(\la)) (x_T^{1,\e}(\la) + x_T^{2,\e}(\la)) d\la,
\eea
which follows from the sum of the first term of integrand integrated in the first and third lines of \eqref{var-cost2} and the first term of the integrand integrated in the third line of \eqref{EQ2}. We now deal with $\EE_{3}^\QQ$. Define $\wdt x_t^\e = x_t^{1,\e}+ x_t^{2,\e}$. Following \eqref{x1} and \eqref{x2}, we have
\beq{wdt-x}\barray
d\wdt x_t^\e \ad = [\Dl \wdt x_t^\e + (b-g^jh^j)_x(\wdh x_t, \wdh u_t)\wdt x_t^\e \\
\aad\qquad\qquad + \dl(b-g^j h^j)(t; u^\e_t)+\half (b-g^j h^j)_{xx}(\wdh x_t, \wdh u_t) x_t^{1,\e} x_t^{1,\e}]dt \\
\aad +[\sg_x(\wdh x_t, \wdh u_t)\wdt x_t^\e + \dl \sg(t; u^\e_t) +\dl \sg_x(t; u^\e_t) x_t^{1,\e} + \half \sg_{xx}(\wdh x_t, \wdh u_t) x_t^{1,\e} x_t^{1,\e}] dW_t \\
\aad + [g_x^j(\wdh x_t, \wdh u_t) \wdt x_t^\e + \dl g^j(t; u^\e_t) +\dl g_x^j(t; u^\e_t) x_t^{1,\e} + \half g_{xx}^j(\wdh x_t, \wdh u_t) x_t^{1,\e}  x_t^{1,\e} ] dY_t^j.
\earray\eeq

Applying the It\^{o} formula to $\qv{p_t, \wdt x_t^\e}_{L^2(\La)}$ for the variational solution of SPDEs in \cite{LR15} with $V:=H_0^1(\La) \times H_0^1(\La), H:=L^2(\La)\times L^2(\La)$, $F:H\to\rr, (x,y) \mapsto \qv{x,y}_{L^2(\La)}$, and taking integration and expectations, we have
\beq{EQ1+2}\barray
\EE^\QQ_{3}
\ad= \EE^\QQ \int_0^T \qv{p_t, \dl b(t;u^\e_t)-g^j(\wdh x_t, \wdh u_t) \dl h^j(t;u_t^\e) \\
\aad \qquad\qquad\qquad
+  \half (b-g^j h^j)_{xx}(\wdh x_t, \wdh u_t) x_t^{1,\e}x_t^{1,\e}}_{L^2(\La)}  dt \\
\aad \quad + \EE\int_0^T \qv{p_t, \half g_{xx}^j(\wdh x_t, \wdh u_t)h^j(\wdh x_t, \wdh u_t) x_t^{1,\e} x_t^{1,\e}} dt \\
\aad \quad +\EE^\QQ \int_0^T \qv{q_{1,t}, \dl \sg(t;u_t^\e) + \half \sg_{xx}(\wdh x_t, \wdh u_t) x_t^{1,\e} x_t^{1,\e}}_{L^2(\La)} dt \\
\aad\quad + \EE^\QQ \int_0^T \qv{q_{2,t}^j,\dl g^j(t;u^\e_t) + \half g_{xx}^j (\wdh x_t,\wdh u_t) x_t^{1,\e} x_t^{1,\e}}_{L^2(\La)}dt +o(\e),
\earray\eeq
where we used the fact that
\bea\ad\!\!\!\!\!\!\!
\EE^\QQ\Big[\int_0^T \qv{p_t, \dl g^j(t; u_t^\e) \dl h^j(t; u_t^\e)}_{L^2(\La)} dt + \int_0^T \qv{q_{1,t}, \dl \sg_x(\wdh x_t, \wdh u_t)x_t^{1,\e}}_{L^2(\La)}dt \Big] =o(\e), \\
\ad\!\!\!\!\!\!\! \EE^\QQ \Big[ \int_0^T\!\! \qv{p_t, \dl g_x^j(t;u^\e_t) h^j(\wdh x_t, \wdh u_t) x_t^{1,\e}}_{L^2(\La)} dt+ \int_0^T\!\! \qv{q_{2,t}^j, \dl g_x^j(\wdh x_t,\wdh u_t) x_t^{1,\e}}_{L^2(\La)} dt\Big] =o(\e).
\eea

Now combining \eqref{E2Q-1}, \eqref{EQ2}, and \eqref{EQ1+2} yields
\beq{EQ12}\barray \ad\!\!\!
\EE_1^\QQ + \EE_2^\QQ\\
\aad\!\!\!= \EE^\QQ \int_0^T \int_{\La}  \dl \ell(t;u_t^\e) + \half \ell_{xx}(\wdh x_t(\la), \wdh u_t) x_t^{1,\e}(\la) x_t^{1,\e}(\la) d\la dt \\
\aad\!\!\!+ \EE^\QQ \int_\La \half m_{xx}(\wdh x_T(\la)) x_T^{1,\e} x_T^{1,\e}(\la) d\la \\
\aad\!\!\!+ \EE^\QQ \int_0^T  \int_\La \Big[ z_{2,t}^j \dl h^j(t; u^\e_t) + \half z_{2,t}^j h_{xx}^j(\wdh x_t(\la), \wdh u_t) x_t^{1,\e} (\la) x_t^{1,\e}(\la)\Big]  d\la dt  \\
\aad\!\!\!+ \EE^\QQ \int_0^T \qv{p_t, [g_x^j(\wdh x_t, \wdh u_t) x_t^{1,\e}] [h_x^j(\wdh x_t, \wdh u_t) x_t^{1,\e}]}_{L^2(\La)}dt\\
\aad\!\!\!+\EE^\QQ \int_0^T \qv{q_{2,t}^j, x_t^{1,\e} [h_x^j(\wdh x_t, \wdh u_t) x_t^{1,\e}]}_{L^2(\La)}dt \\
\aad\!\!\!+\EE^\QQ \int_0^T \qv{p_t, \dl b(t;u_t^\e)- g^j(\wdh x_t, \wdh u_t) \dl h^j(t; u_t^\e) + \half (b-g^j h^j)_{xx}(\wdh x_t, \wdh u_t) x_t^{1,\e} x_t^{1,\e}}_{L^2(\La)} dt \\
\aad\!\!\!+ \EE^\QQ \int_0^T \qv{p_t, \half g_{xx}^j(\wdh x_t, \wdh u_t) h^j(\wdh x_t, \wdh u_t) x_t^{1,\e} x_t^{1,\e}} dt \\
\aad\!\!\! + \EE^\QQ \int_0^T \qv{q_{1,t}, \dl \sg(t;u^\e_t) + \half \sg_{xx}(\wdh x_t, \wdh u_t) x_t^{1,\e} x_t^{1,\e}}dt \\
\aad\!\!\!+ \EE^\QQ \int_0^T \qv{q_{2,t}^j, \dl g^j(t;u_t^\e) + \half g_{xx}^j(\wdh x_t, \wdh u_t) x_t^{1,\e} x_t^{1,\e}}_{L^2(\La)}dt +o(\e).
\earray\eeq

Recalling the definition of the Hamiltonian in \eqref{def-H}, we further have
\beq{EQ12-cont}\barray
\ad \EE_1^\QQ + \EE_2^\QQ \\
\ad = \EE^\QQ \int_0^T H(\wdh x_t, u_t^\e, p_t, q_{1,t}, q_{2,t}, z_{2,t}-\qv{p_t, g(\wdh x_t, \wdh u_t)}_{L^2(\La)} \\
\aad \qquad \qquad - H(\wdh x_t, \wdh u_t, p_t, q_{1,t}, q_{2,t}, z_{2,t}-\qv{p_t, g(\wdh x_t, \wdh u_t)}_{L^2(\La)}) dt\\
\aad  + \EE^\QQ \int_0^T \int_{\La} \Big[\half b_{xx}(\wdh x_t, \wdh u_t)+ \half \ell_{xx}(\wdh x_t, \wdh u_t)\Big] x_t^{1,\e}(\la) x_t^{1,\e}(\la) d\la dt \\
\aad + \EE^\QQ \int_0^T\int_{\La} \half \big(z_{2,t}^j -\qv{p_t, g^j(\wdh x_t, \wdh u_t)}_{L^2(\La)}\big) h_{xx}^j(\wdh x_t, \wdh u_t) x_t^{1,\e}(\la) x_t^{1,\e}(\la) d\la dt  \\
\aad + \EE^\QQ \int_0^T \int_{\La} \Big[\half q_{1,t}(\la) \sg_{xx}(\wdh x_t(\la), \wdh u_t) +\half q_{2,t}^j(\la) g_{xx}^j(\wdh x_t(\la), \wdh u_t) \Big]x_t^{1,\e}(\la) x_t^{1,\e}(\la)  d\la dt \\
\aad + \EE^\QQ \int_{\La} \half  m_{xx}(\wdh x_T(\la)) x_T^{1,\e}(\la) x_T^{1,\e}(\la)d \la  + \EE^\QQ \int_0^T \qv{q_{2,t}^j, x_t^{1,\e} h_x^j(\wdh x_t, \wdh u_t) x_t^{1,\e}} dt + o(\e) \\
\ad =  \EE^\QQ \int_0^T H(\wdh x_t, u_t^\e, p_t, q_{1,t}, q_{2,t}, z_{2,t}-\qv{p_t, g(\wdh x_t, \wdh u_t)}_{L^2(\La)}) \\
\aad \qquad\qquad - H(\wdh x_t, \wdh u_t, p_t, q_{1,t}, q_{2,t}, z_{2,t} -\qv{p_t, g(\wdh x_t, \wdh u_t)}_{L^2(\La)}) dt \\
\aad\quad + \half \EE^\QQ \int_0^T \int_\La H_{xx}(\wdh x_t(\la), \wdh u_t, p_t(\la), q_{1,t}(\la), q_{2,t}(\la), z_{2,t}-\qv{p_t, g(\wdh x_t, \wdh u_t)}_{L^2(\La)})\\
\aad \qquad\qquad\qquad \qquad   \times x_t^{1,\e}(\la) x_t^{1,\e}(\la) d\la dt  \\
\aad \quad + \EE^\QQ \int_0^T\int_\La q_{2,t}^j(\la) h_x^j(\wdh x_t(\la), \wdh u_t) x_t^{1,\e}(\la) x_t^{1,\e}(\la) d\la dt \\
\aad \quad +\half \EE^\QQ \int_0^T m_{xx}(\wdh x_T(\la)) x_T^{1,\e}(\la) x_T^{1,\e}(\la) d\la + o(\e).
\earray\eeq
We proceed to 
characterize the adjoint equation for the quadratic term $x_t^{1,\e}(\la) x_t^{1,\e}(\la)$. We take the approach in \cite{SW21} to deal with the quadratic term $x_t^{1,\e} x_t^{1,\e}$ in \eqref{EQ12-cont} by making a bilinear form into a linear form on the tensor product $L^2(\La) \otimes L^2(\La)\cong L^2(\La^2)$.

\subsection{Mollified second order adjoint state}
Define $X_t^\e(\la,\mu):= x_t^{1,\e}(\la) x_t^{1,\e}(\mu)$. For $f_1, f_2 \in H_0^1(\La)$, applying It\^{o} formula for real-valued semi-martingales
\bea \qv{X_t^\e, f_1 \otimes f_2}_{L^2(\La^2)}:=\qv{x_t^{1,\e},f_1}_{L^2(\La)} \qv{x_t^{1,\e}, f_2}_{L^2(\La)}
\eea
and a density argument, we have in $L^2(\La^2)$,
\beq{dX-lamu}\barray\ad
dX_t^\e(\la, \mu) \\
\ad = x_t^{1,\e}(\la) d x_t^{1,\e}(\mu) + x_t^{1,\e}(\mu) dx_t^{1,\e}(\la) + d\qv{x_t^{1,\e}(\la), x_t^{1\e}(\mu)}_t \\
\ad = x_t^{1,\e}(\la) (\Dl_\mu  x_t^{1,\e}(\mu)+ (b-g^j h^j)_x (t,\mu; u_t^\e) x_t^{1,\e}(\mu)) dt \\
\aad + x_t^{1,\e}(\la) [g_x^j(\wdh x_t(\mu), \wdh u_t) h^j(\wdh x_t(\mu), \wdh u_t) x_t^{1,\e}(\mu) + \dl g^j(t,\mu; u_t^\e) h^j(\wdh x_t(\mu), \wdh u_t)]  dt \\
\aad+ x_t^{1,\e}(\la) [\sg_x(\wdh x_t(\mu),\wdh u_t) \wdh x_t^{1,\e}(\mu) + \dl \sg(t,\mu; u_t^\e)] dW_t \\
\aad + x_t^{1,\e}(\la) [g_x^j(\wdh x_t(\mu), \wdh u_t) x_t^{1,\e}(\mu) + \dl g^j(t,\mu; u_t^\e)] dB_t^j \\
\aad + x_t^{1,\e}(\mu
) (\Dl_\la x_t^{1,\e}(\la) + (b-g^j h^j)_x (t,\la ; u_t^\e) x_t^{1,\e}(\la)) dt \\
\aad + x_t^{1,\e}(\mu) [g_x^j(\wdh x_t(\la), \wdh u_t) h^j(\wdh x_t(\la), \wdh u_t) x_t^{1,\e}(\la)  + \dl g^j(t,\la; u_t^\e) h^j(\wdh x_t(\la), \wdh u_t) ] dt \\
\aad + x_t^{1,\e}(\mu) [\sg_x(\wdh x_t(\la), \wdh u_t) x_t^{1,\e}(\la) + \dl \sg(t,\la; u_t^\e)] dW_t \\
\aad + x_t^{1,\e}(\mu) [g_x^j(\wdh x_t(\la), \wdh u_t)  x_t^{1,\e}(\la) + \dl g^j(t,\la; u_t^\e)] dB_t^j \\
\aad +\qv{\sg_x(\wdh x_t(\la), \wdh u_t) x_t^{1,\e}(\la) + \dl \sg (t,\la; u_t^\e), \sg_x(\wdh x_t(\mu), \wdh u_t) x_t^{1,\e}(\mu) + \dl \sg(t,\mu; u_t^\e)}_{\CL_2(\Xi;\rr)} dt  \\
\aad + \qv{g_x^j(\wdh x_t(\la), \wdh u_t) x_t^{1,\e}(\la) + \dl g^j(t,\la; u_t^\e), g_x^j(\wdh x_t(\mu), \wdh u_t)x_t^{1,\e}(\mu) +\dl g^j(t,\mu; u_t^\e)}_{\rr} dt.
\earray\eeq

Rearranging terms and observing that
\bea
x_t^{1,\e}(\la)  \Dl_\mu x_t^{1,\e}(\mu) + x_t^{1,\e}(\mu) \Dl_\la x_t^{1,\e}(\la) =\Dl X_t^\e (\la, \mu),
\eea
we arrive at the following proposition.

\begin{prop}
The process $X_t^\e (\la,\mu)$ satisfies
\beq{X-lamu}
\left\{\barray
d X_t^\e (\la, \mu) \ad \!\!\!\!=\Big\{\Dl X_t^\e(\la, \mu) + \big[(b-g^j h^j)_x(\wdh x_t(\la), \wdh u_t) \\
\aad\quad + (b-g^j h^j)_x(\wdh x_t(\mu), \wdh u_t) + \qv{\sg_x(\wdh x_t(\la), \wdh u_t), \sg_x(\wdh x_t(\mu), \wdh u_t)}_{\CL_2(\Xi;\rr)}  \\
\aad\quad + \qv{g_x^j(\wdh x_t(\la), \wdh u_t), g_x^j(\wdh x_t(\mu), \wdh u_t)}_{\rr} + (g_x^j(\wdh x_t(\la), \wdh u_t) h^j(\wdh x_t(\la), \wdh u_t) \\
\aad\quad + g_x^j(\wdh x_t(\mu), \wdh u_t) h^j(\wdh x_t(\mu), \wdh u_t)) \big] X_t^\e(\la, \mu) + \Phi_t^\e(\la, \mu) \Big\} dt \\
\aad + \big[(\sg_x(\wdh x_t(\la), \wdh u_t) + \sg_x(\wdh x_t(\mu), \wdh u_t)) X_t^\e(\la,\mu) + \Psi_{1,t}^{\e}(\la,\mu) \big] dW_t \\
\aad + \big[ (g_x^j(\wdh x_t(\la), \wdh u_t) + g_x^j(\wdh x_t(\mu), \wdh u_t) ) X_t^\e(\la,\mu) + \Psi_{2,t}^{j, \e}(\la, \mu) \big] dB_t^j \\
X_0^\e(\la,\mu)\ad \!\!\!\! =0,
\earray\right.
\eeq
where $X^\e\in L^2([0,T]\times \Omega; H_0^1(\La^2)) \cap L^2(\Omega; C([0,T]; L^2(\La^2)))$ and $(\Phi^\e, \Psi_1^\e, \Psi_2^{j,\e}) \in L^2([0,T] \times \Omega; L^2(\La^2)) \times L^2([0,T]\times \Omega; \CL_2(\Xi; L^2(\La^2))) \times L^2([0,T]\times \Omega; L^2(\La^2)), j=1,\dots d$ are defined by
\bea
\Phi_t^\e(\la,\mu) \ad := \dl g^j(t,\la; u_t^\e) h^j(\wdh x_t(\la), \wdh u_t) x_t^{1,\e}(\la) + \dl g^j(t,\mu; u_t^\e ) h^j(\wdh x_t(\mu), \wdh u_t) x_t^{1,\e}(\mu)  \\
\aad + \qv{\sg_x (\wdh x_t(\la), \wdh u_t)x_t^{1,\e}(\la), \dl \sg(t,\mu; u_t^\e)}_{\CL_2(\Xi;\rr)} \\
\aad + \qv{\sg_x (\wdh x_t(\mu), \wdh u_t)  x_t^{1,\e}(\mu), \dl \sg(t,\la; u_t^\e)}_{\CL_2(\Xi;\rr)} \\
\aad +\qv{\dl \sg(t,\la; u_t^\e), \dl \sg(t,\mu; u_t^\e)}_{\CL_2(\Xi; \rr)} \\
\aad + \qv{g_x^j(\wdh x_t(\la), \wdh u_t) x_t^{1,\e}(\la), \dl g^j(t,\mu; u_t^\e)}_{\rr} \\
\aad + \qv{g_x^j(\wdh x_t(\mu), \wdh u_t) x_t^{1,\e}(\mu), \dl g^j(t,\mu; u_t^\e)}_{\rr} \\
\aad + \qv{\dl g^j(\wdh x_t(\la), \wdh u_t), \dl g^j(\wdh x_t(\mu), \wdh u_t) }_{\rr},
\eea
and $
\Psi_{1,t}^{\e}(\la, \mu) := x_t^{1,\e}(\la) \dl \sg(t,\mu; u_t^\e) + x_t^{1,\e}(\mu) \dl \sg(t,\la; u_t^\e)$, $\Psi_{2,t}^{j,\e} (\la, \mu) := x_t^{1,\e}(\la)\times$\\
$\dl g^j(t,\mu; u_t^\e)+ x_t^{1,\e}(\mu) \dl g^j(t,\la; u_t^\e)$.
\end{prop}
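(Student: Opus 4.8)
The computation leading to \eqref{dX-lamu} already contains the essential content, so the plan is threefold: (i) justify that computation rigorously via a tensorized It\^o product formula together with a density argument; (ii) rearrange \eqref{dX-lamu} into the form \eqref{X-lamu}, reading off which coefficients multiply $X_t^\e$ and regrouping the remainder into $\Phi^\e,\Psi_1^\e,\Psi_2^{j,\e}$; and (iii) verify the claimed function-space memberships. For step (i), I would first substitute $dY_t^j=h^j(\wdh x_t,\wdh u_t)\,dt+dB_t^j$ into \eqref{x1} so that the $\la$- and $\mu$-copies of $x^{1,\e}$ become semimartingales driven only by $W$ and $B^j$, at the cost of the extra drift $g_x^j(\wdh x_t,\wdh u_t)h^j(\wdh x_t,\wdh u_t)x_t^{1,\e}+\dl g^j(t;u_t^\e)h^j(\wdh x_t,\wdh u_t)$.

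Then, for fixed $f_1,f_2\in H_0^1(\La)$, the real-valued processes $\qv{x_t^{1,\e},f_i}_{L^2(\La)}$ are continuous semimartingales, and the classical It\^o product rule applied to $\qv{x_t^{1,\e},f_1}_{L^2(\La)}\qv{x_t^{1,\e},f_2}_{L^2(\La)}=\qv{X_t^\e,f_1\otimes f_2}_{L^2(\La^2)}$ yields \eqref{dX-lamu} tested against $f_1\otimes f_2$: here the cross quadratic variation of the $W$- and $B^j$-integrands of the two copies produces exactly the bilinear terms $\qv{\sg_x(\wdh x_t(\la),\wdh u_t),\sg_x(\wdh x_t(\mu),\wdh u_t)}_{\CL_2(\Xi;\rr)}$ and $\qv{g_x^j(\wdh x_t(\la),\wdh u_t),g_x^j(\wdh x_t(\mu),\wdh u_t)}_{\rr}$ (the parts proportional to $X_t^\e$) together with the mixed terms that fall into $\Phi^\e$. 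Since $\mathrm{span}\{f_1\otimes f_2:f_1,f_2\in H_0^1(\La)\}$ is dense in $L^2(\La^2)$ and in $H_0^1(\La^2)$, a density/closure argument promotes this to the $L^2(\La^2)$-valued identity \eqref{dX-lamu}. Step (ii) is then the observation $x_t^{1,\e}(\la)\Dl_\mu x_t^{1,\e}(\mu)+x_t^{1,\e}(\mu)\Dl_\la x_t^{1,\e}(\la)=\Dl X_t^\e(\la,\mu)$ followed by bookkeeping: collecting all coefficients of $X_t^\e$ produces the bracketed drift and the diffusion coefficients $\sg_x(\wdh x_t(\la),\wdh u_t)+\sg_x(\wdh x_t(\mu),\wdh u_t)$ and $g_x^j(\wdh x_t(\la),\wdh u_t)+g_x^j(\wdh x_t(\mu),\wdh u_t)$, while the leftover inhomogeneous pieces are precisely $\Phi^\e,\Psi_1^\e,\Psi_2^{j,\e}$; uniqueness of $X^\e$ is inherited from the linearity of \eqref{X-lamu}.

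For step (iii), the linear equation \eqref{x1} has, by \assref{ass}\,(H1) and the variational SPDE theory in \cite{LR15}, a unique solution $x^{1,\e}\in L^2([0,T]\times\Omega;H_0^1(\La))\cap L^2(\Omega;C([0,T];L^2(\La)))$, and \lemref{var-est} gives $\sup_{t\in[0,T]}\EE^\QQ[\|x_t^{1,\e}\|_{L^2(\La)}^{2k}]\le K\e^k$. Since $\La\subset\rr$ is one-dimensional, $H_0^1(\La)\hookrightarrow C(\overline{\La})$, so $X_t^\e=x_t^{1,\e}\otimes x_t^{1,\e}\in H_0^1(\La^2)$ with $\partial_\la X_t^\e(\la,\mu)=\partial_\la x_t^{1,\e}(\la)\cdot x_t^{1,\e}(\mu)\in L^2(\La^2)$ — which is what makes $\Dl X^\e$ meaningful in the Gelfand triple $H_0^1(\La^2)\hookrightarrow L^2(\La^2)\hookrightarrow H^{-1}(\La^2)$ — and the moment bounds upgrade to $X^\e\in L^2([0,T]\times\Omega;H_0^1(\La^2))\cap L^2(\Omega;C([0,T];L^2(\La^2)))$. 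The stated integrability of $(\Phi^\e,\Psi_1^\e,\Psi_2^{j,\e})$ is then immediate from the same bounds, using that the increments $\dl\sg(t;u_t^\e)$, $\dl g^j(t;u_t^\e)$ are bounded by \assref{ass}\,(H1)--(H2).

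The step I expect to be the main obstacle is making the tensorized It\^o formula in (i) fully rigorous: passing from the test-function identity to the $L^2(\La^2)$-valued equation \eqref{dX-lamu}, and in particular confirming that $X_t^\e$ genuinely lies in $H_0^1(\La^2)$ so that the $\Dl$-term in \eqref{X-lamu}, interpreted via the Gelfand triple on $L^2(\La^2)$, coincides with $x_t^{1,\e}\Dl_\mu x_t^{1,\e}+x_t^{1,\e}\Dl_\la x_t^{1,\e}$. The remainder is careful accounting of the cross terms — keeping the $W$-covariation (valued in $\CL_2(\Xi;\rr)$) separate from the $B^j$-covariations and checking that the mixed products $x^{1,\e}\dl\sg$, $x^{1,\e}\dl g^j$ and $\dl\sg\cdot\dl\sg$, $\dl g^j\cdot\dl g^j$ land in the correct slots of $\Phi^\e$, $\Psi_1^\e$, $\Psi_2^{j,\e}$.
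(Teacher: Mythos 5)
Your proposal is correct and follows essentially the same route as the paper: the paper derives \eqref{dX-lamu} by applying the It\^{o} product formula to $\qv{X_t^\e, f_1\otimes f_2}_{L^2(\La^2)}$ for $f_1,f_2\in H_0^1(\La)$ followed by a density argument (with $dY_t^j=h^j\,dt+dB_t^j$ already substituted), and its proof of the proposition simply points to that computation plus the regularity of $x^{1,\e}$. Your added details — the one-dimensional embedding $H_0^1(\La)\hookrightarrow C(\overline{\La})$ giving $X_t^\e\in H_0^1(\La^2)$, and the bookkeeping of the cross quadratic variations into $\Phi^\e,\Psi_1^\e,\Psi_2^{j,\e}$ — are consistent with what the paper leaves implicit.
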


\begin{proof}
The derivation of \eqref{X-lamu} follows from \eqref{dX-lamu} and the corresponding regularity follows from the regularity of $x_t^{1,\e}$. The proof is complete.
\end{proof}

Let us define the operator $\pi: H_0^1(\La^2)  \to L^2(\La)$ as $\pi(\omega)(\la): = \omega(\la,\la)$, then \eqref{EQ12-cont} can be rewritten as
\beq{EQ12+}\barray \ad
\EE_1^\QQ + \EE_2^\QQ \\
\aad = \EE^\QQ \int_0^T H(\wdh x_t, u_t^\e, p_t, q_{1,t}, q_{2,t}, z_{2,t}-\qv{p_t, g(\wdh x_t,\wdh u_t)}_{L^2(\La)}) \\
\aad \qquad\qquad - H(\wdh x_t, \wdh u_t, p_t, q_{1,t}, q_{2,t}, z_{2,t} -\qv{p_t, g(\wdh x_t, \wdh u_t)}_{L^2(\La)}) dt \\
\aad\quad + \half \EE^\QQ \int_0^T \int_\La H_{xx}(\wdh x_t(\la), \wdh u_t, p_t(\la), q_{1,t}(\la), q_{2,t}(\la), z_{2,t} - \qv{p_t, g(\wdh x_t, \wdh u_t)}_{L^2(\La)}) \\
\aad \qquad \qquad\qquad \qquad  \times \pi(X_t^\e) (\la) d\la dt  \\
\aad\quad +
\EE^\QQ \int_0^T \int_\La q_{2,t}^j(\la) h_x^j(\wdh x_t(\la), \wdh u_t) \pi(X_t^\e)(\la) d\la dt \\
\aad \quad + \half \EE^\QQ \int_{\La} m_{xx}(\wdh x_T(\la)) X_T^{\e}(\la,\la) d\la.
\earray\eeq

By the trace theorem in \cite[Section 7.38]{AF03},
the operator $\pi: H_0^1(\La^2) \to L^2(\La)$ is continuous. Due to $X^\e \in L^2([0,T]\times \Omega; H_0^1(\La^2))$, terms in the fourth and six lines of \eqref{EQ12+} are
linear and bounded in $X^\e$. However, the spatial regularity of $X_T^\e \in L^2(\Omega; L^2(\La^2))$ does not guarantee that the last line of \eqref{EQ12+} is continuous in $X_T^\e$, since it involves evaluating $X_T^\e$ on the diagonal $(\la,\la)$.
To overcome this challenge, we use the method in \cite{SW21} to introduce a mollified terminal condition using the heat kernel. We define
\beq{m-eta}
m_{xx}^\eta(\la,\mu):=\half\big[m_{xx}(\wdh x_t(\la)) + m_{xx}(\wdh x_t(\mu))\big]  \frac{1}{\sqrt{4\pi \eta}}\exp \Big(-\frac{|\la-\mu|^2}{4 \eta} \Big) \in L^2(\La^2).
\eeq
Then we have
\beq{limit-m-eta}\barray\ad\!\!\!\!\!
\EE^\QQ \int_{\La} m_{xx}(\wdh x_T(\la)) x_T^{1,\e}(\la) x_T^{1,\e}(\la) d\la \\
\aad\!\!\!\!\!  =\lim_{\eta\to 0} \EE^\QQ \int_{\La^2} \half \big[ m_{xx}(\wdh x_t(\la)) + m_{xx}(\wdh x_t(\mu)) \big]x_T^{1,\e}(\la) x_T^{1,\e}(\mu) \\
\aad \qquad \qquad\qquad\quad  \times \frac{1}{\sqrt{4\pi \eta}} \exp\Big(-\frac{|\la-\mu|^2}{4\eta} \Big) d\la d\mu \\
\aad\!\!\!\!\! = \lim_{\eta\to 0} \EE^\QQ \int_{\La^2} \half \big[m_{xx}(\wdh x_t(\la)) + m_{xx}(\wdh x_t(\mu))\big] \frac{1}{\sqrt{4\pi \eta}} \exp\Big(-\frac{|\la-\mu|^2}{4\eta} \Big) X_T^\e(\la,\mu) d\la d\mu.
\earray\eeq

We now consider the following mollified  second order adjoint equation
\beq{P-eta}\left\{\barray
d P_t^\eta(\la,\mu) \ad = -\Big\{\Dl P_t^\eta(\la,\mu) + (b_x(\wdh x_t(\la), \wdh u_t) + b_x(\wdh x_t(\mu), \wdh u_t)) P_t^\eta(\la,\mu) \\
\aad \quad +\qv{\sg_x(\wdh x_t(\la), \wdh u_t), \sg_x(\wdh x_t(\mu), \wdh u_t)}_{\CL_2(\Xi;\rr)} P_t^\eta(\la,\mu) \\
\aad \quad + \qv{g_x^j(\wdh x_t(\la), \wdh u_t), g_x^j(\wdh x_t(\mu), \wdh u_t)}_{\rr} P_t^\eta(\la,\mu) \\
\aad \quad - (g^j(\wdh x_t(\la), \wdh u_t) h_x^j (\wdh x_t(\la), \wdh u_t)+g^j(\wdh x_t(\mu), \wdh u_t) h_x^j(\wdh x_t(\mu), \wdh u_t)) P_t^\eta(\la, \mu) \\
\aad \quad + \qv{\sg_x(\wdh x_t(\la), \wdh u_t) + \sg(\wdh x_t(\mu), \wdh u_t), Q_{1,t}^\eta (\la,\mu)}_{\CL_2(\Xi;\rr)} \\
\aad \quad + \qv{g_x^j(\wdh x_t(\la), \wdh u_t)+ g_x^j(\wdh x_t(\mu),\wdh u_t), Q_{2,t}^{j,\eta}(\la,\mu)}_{\rr} \\
\aad \quad + \pi^* ( H_{xx}(t,\wdh x_t, \wdh u_t, p_t, q_{1,t}, q_{2,t}, z_{2,t} -\qv{p_t, g(\wdh x_t, \wdh u_t)}_{L^2(\La)}) ) \\
\aad \quad + \pi^* (q_{2,t}^j(\la) h_x^j(\wdh x_t(\la), \wdh u_t)+ q_{2,t}^j(\mu) h_x^j(\wdh x_t(\mu), \wdh u_t))\Big\} dt \\
\aad \quad + Q_{1,t}^\eta(\la,\mu)dW_t + Q_{2,t}^{j,\eta}(\la,\mu) dB_t^j \\
P_T^\eta(\la, \mu) \ad = m_{xx}^\eta (\la,\mu),
\earray\right.
\eeq
where $m_{xx}^\eta$ is given by \eqref{m-eta}, and $\pi^*: L^2(\La)\to H^{-1}(\La^2)$ is the adjoint of the operator of $\pi$ given by
\beq{pi-star}
\qv{\pi^* f, w}_{H^{-1}(\La^2)\times H_0^1(\La^2)} := \int_{\La} f(\la) \pi(w)(\la) d\la = \int_{\La} f(\la) w(\la,\la)d\la,
\eeq
for $f \in L^2(\La), w\in H_0^1(\La^2)$. Then one can show that \eqref{P-eta} has a unique variational solution $(P^\eta, Q_1^\eta, Q_2^{\eta})$ on the Gelfand triple $H_0^1(\La^2) \hookrightarrow L^2(\La^2) \hookrightarrow H^{-1}(\La^2)$ such that
\bea
P^\eta \in L^2([0,T]\times \Omega; H_0^1(\La^2)) \cap L^2(\Omega; C([0,T]; L^2(\La^2)))
\eea
and
\bea
Q_1^\eta \in L^2([0,T]\times \Omega; \CL_2(\Xi; L^2(\La^2))), \quad  Q_2^{\eta,j} \in L^2([0,T]\times \Omega; L^2(\La^2)), j=1,\dots d.
\eea
We refer to the work of \cite{Ben83,SW21,LR15} for the existence and uniqueness of the variational solutions on the Gelfand triple.

Now, applying
the It\^{o} formula to $\qv{P_t^\eta(\la,\mu), X_t^\e(\la,\mu)}_{\rr}$,
we have
\beq{P-X}\barray\ad\!\!\!
d\qv{P_t^\eta(\la,\mu), X_t^\e(\la, \mu)}_{\rr} \\
\aad\!\!\! = \qv{P_t^\eta(\la,\mu), dX_t^\e(\la,\mu)}_{\rr} + \qv{X_t^\e(\la,\mu), dP_t^\eta(\la,\mu)}_{\rr} + d\qv{P^\eta(\la,\mu), X^\e(\la,\mu)}_t \\
\aad\!\!\! = \bqv{P_t^\eta(\la,\mu),  \Dl X_t^\e(\la,\mu) + (b-g^j h^j)_x(\wdh x_t(\la), \wdh u_t)X_t^\e(\la,\mu)}_{\rr} \\
\aad + \qv{P_t^\eta(\la,\mu), (b-g^j h^j)_x(\wdh x_t(\mu), \wdh u_t) X_t^\e(\la,\mu)}_{\rr} dt \\
\aad + \bqv{P_t^\eta(\la,\mu), \qv{\sg_x(\wdh x_t(\la), \wdh u_t), \sg_x(\wdh x_t(\mu), \wdh u_t)}_{\CL_2(\Xi;\rr)}  X_t^\e(\la,\mu)}_{\rr} dt \\
\aad + \bqv{P_t^\eta(\la,\mu), \qv{g_x^j(\wdh x_t(\la), \wdh u_t), g_x^j(\wdh x_t(\mu), \wdh u_t)}_{\rr} X_t^\e(\la, \mu)}_{\rr}dt  \\
\aad + \bqv{P_t^\eta(\la,\mu),[ g_x^j(\wdh x_t(\la, \wdh u_t)) h^j(\wdh x_t(\la), \wdh u_t) + g_x^j(\wdh x_t(\mu), \wdh u_t) h^j(\wdh x_t(\mu), \wdh u_t)] X_t^\e(\la,\mu)}_{\rr} dt \\
\aad +\qv{P_t^\eta(\la,\mu), \Phi_t^\e(\la,\mu) }_{\rr} dt \\
\aad + \bqv{P_t^\eta(\la,\mu), \big[(\sg_x(\wdh x_t(\la), \wdh u_t) + \sg_x(\wdh x_t(\mu), \wdh u_t)) X_t^\e(\la,\mu) + \Psi_{1,t}^{\e}(\la,\mu) \big] dW_t}_{\rr}  \\
\aad + \bqv{P_t^\eta(\la,\mu),\big[(g_x^j(\wdh x_t(\la), \wdh u_t) + g_x^j(\wdh x_t(\mu), \wdh u_t)) X_t^\e(\la,\mu) + \Psi_{2,t}^{j, \e}(\la,\mu)\big] dB_t^j}_{\rr} \\
\aad + \bqv{X_t^\e(\la, \mu), -\big \{\Dl P_t^\eta(\la,\mu)  + (b_x(\wdh x_t(\la), \wdh u_t)+ b_x(\wdh x_t(\mu), \wdh u_t)) P_t^\eta(\la,\mu) \\
\aad\qquad \qquad\qquad + \qv{\sg_x(\wdh x_t(\la), \wdh u_t), \sg_x(\wdh x_t(\mu), \wdh u_t)}_{\CL_2(\Xi;\rr)} P_t^\eta(\la,\mu) \\
\aad \qquad \qquad\qquad + \qv{g_x^j(\wdh x_t(\la), \wdh u_t), g_x^j(\wdh x_t(\mu), \wdh u_t)}_{\rr} P_t^\eta(\la,\mu) \\
\aad \qquad \qquad\qquad- \big[g^j(\wdh x_t(\la), \wdh u_t) h_x^j(\wdh x_t(\la), \wdh u_t) + g^j(\wdh x_t(\mu), \wdh u_t) h_x^j(\wdh x_t(\mu), \wdh u_t)\big]  P_t^\eta(\la,\mu) \\
\aad \qquad \qquad\qquad + \qv{\sg_x(\wdh x_t(\la), \wdh u_t)+ \sg_x(\wdh x_t(\mu), \wdh u_t), Q_1^{2,\eta}(\la,\mu)}_{\CL_2(\Xi;\rr)} \\
\aad \qquad \qquad\qquad + \qv{g_x^j(\wdh x_t(\la), \wdh u_t) + g_x^j(\wdh x_t(\mu), \wdh u_t), Q_{2,t}^{j,\eta}(\la, \mu)}_{\rr} \\
\aad \qquad \qquad\qquad + \pi^* (H_{xx}(\wdh x_t(\la), \wdh u_t, p_t, q_{1,t}, q_{2,t}, z_{2,t}- \qv{p_t, g(\wdh x_t, \wdh u_t)}_{L^2(\La)})) \\
\aad \qquad \qquad\qquad + \pi^* (q_{2,t}^j(\la) h_x^j(\wdh x_t(\la), \wdh u_t) + q_{2,t}^j(\mu) h_x^j(\wdh x_t(\mu), \wdh u_t)) \big\} }_{\rr}  dt \\
\aad \quad + \qv{X_t^\e(\la,\mu), Q_{1,t}^\eta(\la,\mu) dW_t + Q_{2,t}^{j,\eta}(\la,\mu) dB_t^j}_{\rr} \\
\aad \quad + \qv{(\sg_x(\wdh x_t(\la), \wdh u_t) + \sg_x(\wdh x_t(\mu), \wdh u_t)) X_t^\e(\la,\mu) + \Psi_{1,t}^{\e}(\la, \mu), Q_{1,t}^\eta(\la,\mu)}_{\rr} dt \\
\aad \quad + \qv{(g_x^j(\wdh x_t(\la), \wdh u_t) + g_x^j(\wdh x_t(\mu), \wdh u_t)) X_t^\e(\la,\mu) + \Psi_{2,t}^{j,\e}(\la,\mu), Q_{2,t}^{j,\eta}(\la,\mu)}_{\rr} dt.
\earray\eeq

Consequently, we have
\bea\ad
d\qv{P_t^\eta(\la,\mu), X_t^\e(\la,\mu)}_{\rr} \\
\aad = \qv{P_t^\eta(\la, \mu), \Phi_t^\e(\la,\mu) }_{\rr} dt  \\
\aad \quad -\bqv{X_t^\e(\la,\mu), \pi^* \big(H_{xx}(t,\wdh x_t, \wdh u_t, p_t, q_{1,t}, q_{2,t}, z_{2,t} -\qv{p_t, g(\wdh x_t, \wdh u_t)}_{L^2(\La)})\big) }_{\rr} dt \\
\aad \quad -\bqv{ X_t^\e(\la,\mu),  \pi^* (q_{2,t}^j(\la) h_x^j(\wdh x_t(\la), \wdh u_t) + q_{2,t}^j (\mu) h_x^j(\wdh x_t(\mu), \wdh u_t))}_{\rr} dt \\
\aad \quad + \qv{Q_{1,t}^\eta(\la,\mu), \Psi_{1,t}^{\e}(\la, \mu)}_{\CL_2(\Xi;\rr)} dt + \qv{Q_{2,t}^{j,\eta}(\la, \mu), \Psi_{2,t}^{j,\e}(\la,\mu) }_{\rr} dt  \\
\aad\quad + \bqv{P_t^\eta(\la,\mu),\big[(\sg_x(\wdh x_t(\la), \wdh u_t) + \sg_x(\wdh x_t(\mu), \wdh u_t)) X_t^\e(\la,\mu) + \Psi_{1,t}^{\e}(\la,\mu) \big] dW_t}_{\rr}  \\
\aad \quad + \bqv{P_t^\eta(\la,\mu),\big[(g_x^j(\wdh x_t(\la), \wdh u_t) + g_x^j(\wdh x_t(\mu), \wdh u_t)) X_t^\e(\la,\mu) + \Psi_{2,t}^{j, \e}(\la,\mu)\big] dB_t^j}_{\rr} \\
\aad \quad + \qv{X_t^\e(\la,\mu), Q_{1,t}^\eta(\la,\mu) dW_t + Q_{2,t}^{j,\eta}(\la,\mu)dB_t^j}_{\rr}.
\eea

Therefore, taking integration and expectation and considering initial and terminal conditions of $X^\e(\la,\mu)$ and $P^\eta(\la,\mu)$, respectively, we obtain
\beq{int-PX}\barray\ad\!\!\!\!\!\!
\EE^\QQ \big[ \qv{m_{xx}^\eta, X_T^\e}_{L^2(\La^2)} \big] \\
\aad\!\!\!\!\!\!= \EE^\QQ \Big[ \int_0^T \qv{P_t^\eta, \Phi_t^\e}_{L^2(\La^2)} + \qv{Q_{1,t}^\eta, \Psi_{1,t}^{\e}}_{\CL_2(\Xi; L^2(\La^2))} + \qv{Q_{2,t}^{j,\eta}, \Psi_{2,t}^{j,\e}}_{L^2(\La^2)}  dt \Big] \\
\aad\!\!\!\!\!\!- \EE^\QQ \!\! \int_0^T\!\!\!\! \bqv{X_t^\e, \pi^*( H_{xx}(\wdh x_t, \wdh u_t, p_t, q_{1,t}, q_{2,t}, z_{2,t}-\qv{p_t, g(\wdh x_t, \wdh u_t)}_{L^2(\La)}))}_{H_0^1(\La^2), H^{-1}(\La^2)}  dt \\
\aad\!\!\!\!\!\!- \EE^\QQ \int_0^T \bqv{X_t^\e, \pi^* (q_{2,t}^j h_x^j(\wdh x_t, \wdh u_t) + q_{2,t}^j h_x^j(\wdh x_t, \wdh u_t) )}_{H_0^1(\La^2), H^{-1}(\La^2)} dt.
\earray\eeq

Consequently, we characterize the mollified second order adjoint equation by \eqref{P-eta}. Then combining \eqref{EQ12-cont} and \eqref{int-PX}, and noticing  \eqref{pi-star}, we get
\beq{CT-eta1} \barray\ad
\EE_1^\QQ + \EE_2^\QQ \\
\aad \!\!\!\! = \EE^\QQ \int_0^T H(\wdh x_t, u_t^\e, p_t, q_{1,t}, q_{2,t}, z_{2,t}-\qv{p_t, g(\wdh x_t, \wdh u_t)}_{L^2(\La)}) \\
\aad\qquad\qquad - H(\wdh x_t, \wdh u_t, p_t, q_{1,t}, q_{2,t}, z_{2,t} -\qv{p_t, g^j(\wdh x_t, \wdh u_t)}_{L^2(\La)}) dt\\
\aad +\half \EE^\QQ \int_0^T \qv{P_t^\eta, \Phi_t^\e}_{L^2(\La^2)} + \qv{Q_{1,t}^\eta, \Psi_{1,t}^{\e}}_{\CL_2(\Xi; L^2(\La^2))}  + \qv{Q_{2,t}^{j,\eta}, \Psi_{2,t}^{j,\e}}_{L^2(\La^2)} dt \\
\aad + \half \EE^\QQ \Big[ \int_{\La} m_{xx}(\wdh x_T(\la)) x_T^{1,\e}(\la) x_T^{1,\e}(\la) d\la  -  \int_\La m_{xx}^\eta(\wdh x_T(\la)) x_T^{1,\e}(\la) x_T^{1,\e}(\la) d\la \Big]  \\
\aad\!\!\!\!:= \EE_4^\QQ +\half \EE^\QQ \Big[\int_\La m_{xx}(\wdh x_T(\la)) x_T^{1,\e}(\la) x_T^{1,\e}(\la) d\la  -\int_\La m_{xx}^\eta(\wdh x_T(\la)) x_T^{1,\e}(\la) x_T^{1,\e}(\la) d\la \Big].
\earray\eeq

\subsection{The limit of the mollified second order adjoint equation}
We now in the position to study the limit of the mollified second order adjoint equation as $\eta \to 0$, i.e., $P=\lim_{\eta \to 0} P^\eta$.  Recall \eqref{m-eta}, \eqref{limit-m-eta}, and \eqref{pi-star}, $m_{xx}^\eta$ is chosen such that
\bea\ad
\lim_{\eta \to 0} m_{xx}^\eta = \pi^* ( m_{xx}(\wdh x_T)),  \quad \text{ in } H^{-1}(\La^2).
\eea

Letting $\eta \to 0$, we have
\begin{thm}\label{thm:PQ}
The equation
\beq{eta}\left\{ \barray
d P_t(\la,\mu) \ad = - \Big\{ \Dl P_t(\la,\mu) + (b_x(\wdh x_t(\la), \wdh u_t)+ b_x(\wdh x_t(\mu), \wdh u_t)) P_t(\la, \mu)) \\
\aad  +  \qv{\sg_x(\wdh x_t(\la), \wdh u_t), \sg_x(\wdh x_t(\mu), \wdh u_t)}_{\CL_2(\Xi; \rr)} P_t(\la,\mu)  \\
\aad + \qv{g_x^j(\wdh x_t(\la), \wdh u_t),  g_x^j(\wdh x_t(\mu), \wdh u_t)}_{\rr} P_t(\la,\mu) \\
\aad - (g^j(\wdh x_t(\la), \wdh u_t) h_x^j(\wdh x_t(\la), \wdh u_t) + g^j(\wdh x_t(\mu), \wdh u_t) h_x^j(\wdh x_t(\mu), \wdh u_t)) P_t(\la, \mu) \\
\aad + \qv{\sg_x(\wdh x_t(\la), \wdh u_t) + \sg_x(\wdh x_t(\mu), \wdh u_t), Q_{1,t}(\la,\mu)}_{\CL_2(\Xi; \rr)} \\
\aad + \qv{g_x^j(\wdh x_t(\la), \wdh u_t) + g_x^j(\wdh x_t(\mu), \wdh \mu_t), Q_{2,t}^j(\la, \mu)}_{\rr} \\
\aad + \pi^* (H_{xx}(\wdh x_t(\la), \wdh u_t, p_t(\la), q_{1,t}(\la), q_{2,t}(\la), z_{2,t}-\qv{p_t, g(\wdh x_t, \wdh u_t)}_{L^2(\La)}) ) \\
\aad + \pi^* (q_{2,t}^j(\la) h_{x}^j(\wdh x_t(\la), \wdh u_t) + q_{2,t}^j (\mu)  h_x^j(\wdh x_t(\mu), \wdh u_t))
\Big\} dt \\
\aad+ Q_{1,t}(\la,\mu) d W_t + Q_{2,t}^j(\la, \mu) dB_t^j \\
P_T(\la,\mu) \ad = \pi^*(m_{xx}(\wdh x_T(\la)) ),
\earray\right.
\eeq
has a unique adapted solution $(P,Q_1, Q_2)$ with $Q_2:=(Q_2^1,\dots,Q_2^d)$ such that
\bea
P\in L^2([0,T]\times \Omega; L^2(\La^2)) \cap L^2([0,T]\times \Omega; C([0,T]; H^{-1}(\La^2)))
\eea
and
\bea
Q_1 \in L^2([0,T]\times \Omega; \CL_2(\Xi; H^{-1}(\La^2))), \quad Q_2^j \in L^2([0,T]\times \Omega, H^{-1}(\La^2)), j=1,\dots d.
\eea
\end{thm}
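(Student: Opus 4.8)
The plan is to construct $(P,Q_1,Q_2)$ as the limit, as $\eta\downarrow 0$, of the mollified solutions $(P^\eta,Q_1^\eta,Q_2^{\eta})$ of \eqref{P-eta}, working on the \emph{shifted} Gelfand triple
\[
L^2(\La^2)\ \hookrightarrow\ H^{-1}(\La^2)\ \hookrightarrow\ H^{-2}(\La^2),
\]
one step below the triple $H_0^1(\La^2)\hookrightarrow L^2(\La^2)\hookrightarrow H^{-1}(\La^2)$ used for \eqref{P-eta}; the shift is forced by the terminal datum $\pi^*(m_{xx}(\wdh x_T))$, which by \eqref{pi-star} lies in $H^{-1}(\La^2)$ only, and this is exactly why the regularity asserted here is one degree weaker than for $P^\eta$. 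On this triple the Dirichlet Laplacian $\Dl$ on $\La^2$ is a continuous map $L^2(\La^2)\to H^{-2}(\La^2)$ and is coercive, $\qv{-\Dl\varphi,\varphi}_{H^{-2}(\La^2)\times L^2(\La^2)}=\|\varphi\|_{L^2(\La^2)}^2$. The one structural ingredient needed in advance is that multiplication by $a(\la)+a(\mu)$, where $a$ is any of $b_x(\wdh x_t(\cdot),\wdh u_t)$, $g_x^j(\wdh x_t(\cdot),\wdh u_t)$, $g^j h_x^j(\wdh x_t(\cdot),\wdh u_t)$, or (through the identification $L^2(\La^2,\CL_2(\Xi,\rr))\approxeq\CL_2(\Xi,L^2(\La^2))$) $\sg_x(\wdh x_t(\cdot),\wdh u_t)$, is a bounded operator on $L^2(\La^2)$ and on $H_0^1(\La^2)$, hence by transposition on $H^{-1}(\La^2)$, with operator norm at most $K\bigl(1+\|\wdh x_t\|_{H_0^1(\La)}\bigr)$: the $L^2(\La^2)$ bound is immediate from (H1), while for $H_0^1(\La^2)$ one expands the gradient and controls the term $|v|\bigl(|a'(\la)|+|a'(\mu)|\bigr)$ in $L^2(\La^2)$ by $\bigl(\sup_\la\|v(\la,\cdot)\|_{L^2(\La)}+\sup_\mu\|v(\cdot,\mu)\|_{L^2(\La)}\bigr)\|a'\|_{L^2(\La)}$, using the one-dimensional trace inequality $\sup_\la\|v(\la,\cdot)\|_{L^2(\La)}\le C\|v\|_{H^1(\La^2)}$ and $\|a'\|_{L^2(\La)}\le K\|\wdh x_t\|_{H_0^1(\La)}$ (together with $H_0^1(\La)\hookrightarrow L^\infty(\La)$; for $\sg_x$ the same is done componentwise in an orthonormal basis of $\Xi$ and summed using $\|\sg_x\|_{\CL_2},\|\sg_{xx}\|_{\CL_2}\le K$). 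Likewise, by (H1) and the integrability of $(p,q_1,q_2,z_2)$, the two source terms $\pi^*\bigl(H_{xx}(\wdh x_t,\wdh u_t,p_t,q_{1,t},q_{2,t},z_{2,t}-\qv{p_t,g(\wdh x_t,\wdh u_t)}_{L^2(\La)})\bigr)$ and $\pi^*\bigl(q_{2,t}^j h_x^j(\wdh x_t(\cdot),\wdh u_t)\bigr)$ are $\eta$-independent elements of $L^2([0,T]\times\Omega;H^{-1}(\La^2))$, and $m_{xx}^\eta\to\pi^*(m_{xx}(\wdh x_T))$ in $H^{-1}(\La^2)$ with $\sup_{\eta}\|m_{xx}^\eta\|_{H^{-1}(\La^2)}\le K$, by \eqref{m-eta}--\eqref{limit-m-eta}.

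The core step is a uniform-in-$\eta$ energy estimate. Applying the It\^o formula for the Gelfand triple (as in \cite[Theorem~4.2.5]{LR15}) to $\|P_t^\eta\|_{H^{-1}(\La^2)}^2$ along \eqref{P-eta}, the $\Dl$-contribution yields $+2\int_t^T\|P_s^\eta\|_{L^2(\La^2)}^2\,ds$ and the backward It\^o correction yields $+\int_t^T\bigl(\|Q_{1,s}^\eta\|_{\CL_2(\Xi;H^{-1}(\La^2))}^2+\sum_{j=1}^d\|Q_{2,s}^{j,\eta}\|_{H^{-1}(\La^2)}^2\bigr)ds$, both ending up on the left-hand side after rearranging. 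All remaining (zeroth-order, $Q^\eta$-cross, and source) terms pair $P_s^\eta$ with an $H^{-1}(\La^2)$-valued quantity, and by the multiplier bounds above and Young's inequality they are dominated by
\[
\kappa\Bigl(\|Q_{1,s}^\eta\|_{\CL_2(\Xi;H^{-1}(\La^2))}^2+\sum_{j=1}^d\|Q_{2,s}^{j,\eta}\|_{H^{-1}(\La^2)}^2\Bigr)+C_\kappa\bigl(1+\|\wdh x_s\|_{H_0^1(\La)}\bigr)^2\|P_s^\eta\|_{H^{-1}(\La^2)}^2+\bigl\|\pi^*(\cdots)\bigr\|_{H^{-1}(\La^2)}^2 .
\]
Choosing $\kappa$ small to absorb the $\|Q^\eta\|^2$ terms into the left-hand side, estimating the local martingale by the Burkholder--Davis--Gundy inequality, and running a (pathwise, then averaged) Gronwall argument with the $L^1([0,T])$-weight $\bigl(1+\|\wdh x_s\|_{H_0^1(\La)}\bigr)^2$, one obtains
\[
\sup_{\eta>0}\ \EE^\QQ\Bigl[\sup_{t\in[0,T]}\|P_t^\eta\|_{H^{-1}(\La^2)}^2+\int_0^T\|P_t^\eta\|_{L^2(\La^2)}^2\,dt+\int_0^T\|Q_{1,t}^\eta\|_{\CL_2(\Xi;H^{-1}(\La^2))}^2\,dt+\sum_{j=1}^d\int_0^T\|Q_{2,t}^{j,\eta}\|_{H^{-1}(\La^2)}^2\,dt\Bigr]<\infty .
\]

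From this bound one extracts, along a subsequence $\eta_n\downarrow0$, weak limits $P^{\eta_n}\rightharpoonup P$ in $L^2([0,T]\times\Omega;L^2(\La^2))$ and weak-$*$ in $L^2(\Omega;L^\infty(0,T;H^{-1}(\La^2)))$, $Q_1^{\eta_n}\rightharpoonup Q_1$ in $L^2([0,T]\times\Omega;\CL_2(\Xi;H^{-1}(\La^2)))$, and $Q_2^{j,\eta_n}\rightharpoonup Q_2^j$ in $L^2([0,T]\times\Omega;H^{-1}(\La^2))$. Testing \eqref{P-eta} against $\varphi\otimes\theta$ with $\varphi\in H_0^1(\La^2)$ and $\theta$ a bounded adapted scalar process and letting $n\to\infty$ identifies $(P,Q_1,Q_2)$ as a variational solution of \eqref{eta}: every term is linear in $(P^{\eta_n},Q_1^{\eta_n},Q_2^{\eta_n})$ against a fixed, bounded test element, so both the Bochner and the It\^o integrals pass to the limit, while $m_{xx}^{\eta_n}\to\pi^*(m_{xx}(\wdh x_T))$ strongly in $H^{-1}(\La^2)$ handles the terminal term. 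Weak lower semicontinuity of the norms gives $P\in L^2([0,T]\times\Omega;L^2(\La^2))$ and $Q_1,Q_2$ in the stated classes, and the $C([0,T];H^{-1}(\La^2))$-continuity of $P$ then follows from the Gelfand-triple It\^o formula applied to \eqref{eta}, as in \cite{Ben83,SW21,LR15}. Uniqueness is immediate from linearity: the difference of two solutions satisfies \eqref{eta} with zero terminal datum and zero source, and the same It\^o computation for $\|\cdot\|_{H^{-1}(\La^2)}^2$ together with Gronwall forces $P\equiv0$, $Q_1\equiv0$, $Q_2\equiv0$; uniqueness in turn upgrades the subsequential convergence to convergence of the whole family $(P^\eta,Q_1^\eta,Q_2^{\eta})$. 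I expect the main obstacle to be the uniform estimate of the preceding paragraph---concretely, showing that the zeroth-order multiplication operators in \eqref{eta} remain bounded on $H^{-1}(\La^2)$ despite $\wdh x_t$ being only $H_0^1(\La)$-regular; this is the step where the one-dimensionality of $\La$ is essential, via the trace inequality on $\La^2$ and the embedding $H_0^1(\La)\hookrightarrow L^\infty(\La)$. Everything else is routine variational-SPDE bookkeeping.
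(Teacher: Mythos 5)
Your overall architecture --- construct $(P,Q_1,Q_2)$ as the $\eta\downarrow 0$ limit of the mollified solutions of \eqref{P-eta}, via a uniform estimate on the shifted Gelfand triple $L^2(\La^2)\hookrightarrow H^{-1}(\La^2)\hookrightarrow H^{-2}(\La^2)$, weak compactness, identification of the limit equation, and uniqueness by linearity --- is exactly the route the paper intends: its proof consists of a single citation to \cite[Theorem 6.1]{SW21}, and the mollification scheme is already set up in \eqref{m-eta}--\eqref{P-eta}. Your observations that the terminal datum $\pi^*(m_{xx}(\wdh x_T))\in H^{-1}(\La^2)$ forces the one-degree loss of regularity, that $\qv{-\Dl\varphi,\varphi}_{H^{-2}(\La^2)\times L^2(\La^2)}=\|\varphi\|_{L^2(\La^2)}^2$ supplies coercivity on the shifted triple, and that the one-dimensionality of $\La$ enters through the trace inequality $\sup_\la\|v(\la,\cdot)\|_{L^2(\La)}\le C\|v\|_{H^1(\La^2)}$ are all correct and on point.

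There is, however, a genuine gap in the execution of the key uniform estimate. After Young's inequality your drift and cross terms are dominated by $C_\kappa(1+\|\wdh x_s\|_{H_0^1(\La)})^2\|P_s^\eta\|_{H^{-1}(\La^2)}^2$, and you propose to close the estimate by a ``pathwise, then averaged'' Gronwall argument. Pathwise Gronwall produces the factor $\exp\bigl(C\int_0^T(1+\|\wdh x_s\|_{H_0^1(\La)})^2\,ds\bigr)$, and since $\wdh x$ is only known to lie in $L^2([0,T]\times\Omega;H_0^1(\La))$ this exponential need not be integrable, so taking expectations at that point is not justified; applying Gronwall directly to $t\mapsto\EE^\QQ\|P_t^\eta\|_{H^{-1}(\La^2)}^2$ fails for the same reason, the weight being random, unbounded, and correlated with $P^\eta$. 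Note that the zeroth-order terms do not actually need the random multiplier constant --- one can bound $\|aP^\eta\|_{H^{-1}(\La^2)}\le C\|a\|_{L^\infty}\|P^\eta\|_{L^2(\La^2)}$ and absorb into the coercivity --- but the cross terms $\qv{\sg_x(\la)+\sg_x(\mu),Q_{1}^\eta}$ and $\qv{g_x^j(\la)+g_x^j(\mu),Q_{2}^{j,\eta}}$ genuinely require the multiplier to act on $H^{-1}(\La^2)$-valued objects, which is precisely where $\|\wdh x_s\|_{H_0^1(\La)}$ enters irremovably (trading it for the $\|P^\eta\|_{L^2(\La^2)}^2$ coercivity instead forces a smallness condition on $\|\sg_x\|_\infty$ and $\|g_x\|_\infty$). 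Closing the estimate therefore needs a different device --- for instance a transposition/duality bound against the forward equation \eqref{X-lamu} with general square-integrable inhomogeneities, whose energy estimates carry deterministic constants, since a stochastic Gronwall lemma only yields moments of order $p<1$ --- and this is the step your proposal does not supply.
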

\begin{proof}
The proof is similar to \cite[Theorem 6.1]{SW21}, thus details are omitted.
\end{proof}

\subsection{Proof of \thmref{thm:Peng-smp}}
In order to prove the  stochastic maximum principle in \eqref{thm:Peng-smp}, we take limit $\e \to 0$ first and then let $\eta\to 0$. The justification of taking the limit $\e \to 0$ first is that if we take $\eta\to 0$ first, we cannot obtain needed asymptotics for the fourth line of \eqref{CT-eta1}  due to the lack of regularities for $Q_1$ and $Q_2$; see details in \cite[Section 7]{SW21}. To proceed, we need compactness of $x_T^{1,\e}, \e>0$ in $L^2(\La)$.

\begin{lem}\label{lem:compact}
For $\ga \in (0,1/2)$ and $\e \in (0,T-\tau)$, we have
\bea
\EE^\QQ \Big[\|x_T^{1,\e}\|_{H_0^\ga(\La)}^2 \Big] \leq K \e.
\eea
\end{lem}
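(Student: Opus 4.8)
The plan is to pass to the mild (semigroup) formulation of \eqref{x1} under $\QQ$ and exploit parabolic smoothing, using that $H_0^\ga(\La)=D((-\Dl)^{\ga/2})$ with equivalent norms for $\ga\in(0,1/2)$. Under $\QQ$ the process $B_t=Y_t-\int_0^t h(\wdh x_s,\wdh u_s)\,ds$ is a $d$-dimensional Brownian motion independent of the cylindrical Wiener process $W$, so substituting $dY_t^j=dB_t^j+h^j(\wdh x_t,\wdh u_t)\,dt$ into \eqref{x1} and applying the variation-of-constants formula with $S(t)=e^{t\Dl}$ gives
\[
x_T^{1,\e}=\int_0^T S(T-t)F_t\,dt+\int_0^T S(T-t)G_t\,dW_t+\int_0^T S(T-t)H_t^j\,dB_t^j,
\]
where $F_t=(b-g^jh^j)_x(\wdh x_t,\wdh u_t)x_t^{1,\e}+\big(g_x^j(\wdh x_t,\wdh u_t)x_t^{1,\e}+\dl g^j(t;u_t^\e)\big)h^j(\wdh x_t,\wdh u_t)$, $G_t=\sg_x(\wdh x_t,\wdh u_t)x_t^{1,\e}+\dl\sg(t;u_t^\e)$ and $H_t^j=g_x^j(\wdh x_t,\wdh u_t)x_t^{1,\e}+\dl g^j(t;u_t^\e)$ (the variational solution of \eqref{x1} coincides with this mild solution). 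Two structural facts drive the estimate. First, by (H1)--(H2) the coefficients $b_x,g_x^j,\sg_x,h^j$ are bounded, so by \lemref{var-est} the ``$x^{1,\e}$-part'' of each of $F_t,G_t,H_t^j$ has $\QQ$-second moment bounded by $K\,\EE^\QQ\|x_t^{1,\e}\|_{L^2(\La)}^2\le K\e$. Second, the ``forcing parts'' $\dl\sg(t;u_t^\e)$ and $\dl g^j(t;u_t^\e)$ vanish for $t\notin[\tau,\tau+\e]$, while by (H1) together with the moment bounds on $\wdh x$, $v$ and $\wdh u$ one has $\sup_{t}\EE^\QQ\big(\|\dl\sg(t;u_t^\e)\|_{\CL_2(\Xi;L^2(\La))}^2+|\dl g^j(t;u_t^\e)|_{L^2(\La)}^2\big)\le K$. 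Hence $\EE^\QQ\|F_t\|_{L^2(\La)}^2+\EE^\QQ\|G_t\|_{\CL_2(\Xi;L^2(\La))}^2+\EE^\QQ\|H_t^j\|_{L^2(\La)}^2\le K\e+K\indi_{[\tau,\tau+\e]}(t)$.

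For the Bochner term I would use the analytic-semigroup bound $\|(-\Dl)^{\ga/2}S(s)\|_{\CL(L^2(\La))}\le Ks^{-\ga/2}$ and Cauchy--Schwarz in time against the weight $(T-t)^{-\ga/2}$, which is integrable since $\ga/2<1$:
\begin{align*}
\EE^\QQ\Big\|\int_0^T S(T-t)F_t\,dt\Big\|_{H_0^\ga(\La)}^2
&\le\EE^\QQ\Big(\int_0^T K(T-t)^{-\ga/2}\|F_t\|_{L^2(\La)}\,dt\Big)^2\\
&\le K\int_0^T(T-t)^{-\ga/2}\,\EE^\QQ\|F_t\|_{L^2(\La)}^2\,dt\le K\e,
\end{align*}
where the last inequality uses $\EE^\QQ\|F_t\|_{L^2(\La)}^2\le K\e+K\indi_{[\tau,\tau+\e]}(t)$ together with $\int_\tau^{\tau+\e}(T-t)^{-\ga/2}\,dt\le K\e$, valid uniformly in $\e\in(0,T-\tau)$ since $\tau<T$ is fixed and $\ga/2<1$. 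For the two stochastic convolutions I would apply the It\^o isometry in $D((-\Dl)^{\ga/2})$:
\begin{align*}
\EE^\QQ\Big\|\int_0^T S(T-t)G_t\,dW_t\Big\|_{H_0^\ga(\La)}^2
&=\int_0^T\EE^\QQ\big\|(-\Dl)^{\ga/2}S(T-t)G_t\big\|_{\CL_2(\Xi;L^2(\La))}^2\,dt\\
&\le K\int_0^T(T-t)^{-\ga}\,\EE^\QQ\|G_t\|_{\CL_2(\Xi;L^2(\La))}^2\,dt,
\end{align*}
and now $(T-t)^{-\ga}$ is integrable precisely because $\ga<1$, which is where the hypothesis $\ga\in(0,1/2)$ is comfortably used. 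Plugging in $\EE^\QQ\|G_t\|^2\le K\e+K\indi_{[\tau,\tau+\e]}(t)$ and using $\int_\tau^{\tau+\e}(T-t)^{-\ga}\,dt\le K\e$ gives $\le K\e$; the $dB^j$-convolution with $H_t^j$ is handled identically. Adding the three contributions yields $\EE^\QQ\|x_T^{1,\e}\|_{H_0^\ga(\La)}^2\le K\e$.

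The only point requiring genuine care is the interaction between the singular kernels $(T-t)^{-\ga/2}$ and $(T-t)^{-\ga}$ and the spike window $[\tau,\tau+\e]$: one must observe that since $\tau\in(0,T)$ is fixed these kernels remain bounded on $[\tau,\tau+\e]$ for all small $\e$ (indeed $\e\mapsto\e^{-1}\int_\tau^{\tau+\e}(T-t)^{-\ga}\,dt$ extends to a continuous, hence bounded, function on $[0,T-\tau]$ as $\ga<1$), so the forcing terms contribute $O(\e)$ rather than merely $O(\e^{1-\ga})$. Everything else is the standard factorization/smoothing machinery for stochastic convolutions, and the argument parallels \cite[Section~7]{SW21}; the resulting bound, combined with the compact embedding $H_0^\ga(\La)\hookrightarrow L^2(\La)$ for $\ga\in(0,1/2)$, is exactly the compactness input needed to interchange the limits $\e\to0$ and $\eta\to0$ in \eqref{CT-eta1}.
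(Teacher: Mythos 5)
Your proposal is correct and follows essentially the same route as the paper: both pass to the mild formulation of \eqref{x1} under $\QQ$ (substituting $dY^j=dB^j+h^j\,dt$), apply the analytic-semigroup smoothing estimate into $H_0^\ga(\La)$ together with the It\^o isometry for the stochastic convolutions, and separate the $x^{1,\e}$-proportional parts (controlled by \lemref{var-est}) from the spike-supported forcing terms whose time integral against the integrable kernel is $O(\e)$. The only differences are cosmetic: the paper rescales to $\wdt x^{1,\e}=x^{1,\e}/\sqrt{\e}$ and proves uniform boundedness, and uses the exponent $t^{-\ga}$ (hence needing $2\ga<1$) where you use the sharper $t^{-\ga/2}$.
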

\begin{proof}
Let $\wdt x_t^{1,\e}:= x_t^{1,\e}/\sqe, t\in [0,T]$. Then \eqref{x1} and \assref{ass} implies that there exists a unique mild solution $\wdt x_t^{1,\e}$ satisfying
\bea\!\!\!
\wdt x_t^{1,\e} \ad\!\!\!\! = \int_0^T\! e^{(t-s)\Dl} \Big[ (b_x-g^j h_x^j)(\wdh x_t, \wdh u_t) \wdt x_s^{1,\e} + \frac{1}{\sqe} h^j(\wdh x_s, \wdh u_s) (g^j(\wdh x_s, u_s^\e)- g^j(\wdh x_s, \wdh u_s)) \Big] ds \\
\aad \!\!\!\!+ \int_0^t e^{(t-s)\Dl} \big[ \sg_x(\wdh x_s, \wdh u_s)  \wdt x_s^{1,\e} + \frac{1}{\sqe} (\sg(\wdh x_s, u_s^\e)- \sg(\wdh x_s, \wdh u_s))\big]dW_s \\
\aad \!\!\!\! + \int_0^t e^{(t-s)\Dl} \big[ g_x^j(\wdh x_s, \wdh u_s) \wdt x_s^{1,\e} + \frac{1}{\sqe} (g^j(\wdh x_s, u_s^\e) - g^j(\wdh x_s, \wdh u_s)) \big] dB_s^j,
\eea
with $\wdt x_{0}^{1,\e}=0$. It is equivalently to say that it has a unique variational solution; see the equivalence between mild solutions and variational solutions in \cite{LR15}. We will use the mild solution to obtain estimates. Since $e^{t\Dl}$ is an analytic semigroup, we have
\beq{St-est}
\|e^{t \Dl} f\|_{H_0^\ga(\La)} \leq K t^{-\ga} \|f\|_{L^2(\La)}, \quad \forall f\in L^2(\La).
\eeq
Then the boundedness of $b_x, h_x$, and the linear growth of $g$ implies
\bea\ad
\EE^\QQ\Big[ \Big\| \int_0^T e^{(T-s)\Dl} (b_x- g^j h_x^j)(\wdh x_t, \wdh u_t) \wdt x_s^{1,\e} ds \Big \|_{H_0^\ga(\La)} ^2 \Big]  \\
\aad \leq K \sup_{t\in [0,T]} \EE^\QQ \big[ \|\wdt x_t^{1,\e} \|_{H_0^\ga(\La)}^2 \big] \int_0^T \frac{1}{(T-s)^{2\ga}}ds \\
\aad \qquad\quad  \times \Big(1+\sup_{t\in [0,T]}\EE^\QQ[\|\wdh x_t\|_{L^2(\La)}^2 ] + \sup_{t\in [0,T]} \EE^\QQ \|\wdh u_t\|_\CU^2 \Big) <\infty.
\eea
In the above, we used the fact that $\sup_{t\in [0,T]}\EE^\QQ[\|\wdt x_t^{1,\e}\|_{H_0^\ga(\La)}^2]<\infty$, which can be obtained by a similar argument as the proof of this lemma together with the application of the Grownall inequality. Thus, we omit the proof of this fact.

The definition of $u_t^\e$ in \eqref{ue}, the boundedness of $h$, and the linear growth of $g$ yield
\bea \ad
\EE^\QQ \Big[\Big\| \int_0^T e^{(T-s)\Dl} \frac{1}{\sqe} h^j(\wdh x_s, \wdh u_s) (g^j(\wdh x_s, u_s^\e)- g^j(\wdh x_s, \wdh u_s)) ds \Big\|_{H_0^\ga(\La)}^2 \Big] \\
\aad =  \frac{1}{\e} \EE^\QQ \int_\tau^{\tau+\e} \frac{1}{(T-s)^{2\ga}} \| h^j(\wdh x_s, \wdh u_s) (g^j(\wdh x_s, v)- g^j(\wdh x_s, \wdh u_s)) \|_{L^2(\La)}^2 ds  \\
\aad \leq K \Big(1+ \sup_{t\in [0,T]}\EE^\QQ [\|\wdh x_t\|_{L^2(\La)}^2] + \|v\|_{\CU}^2 + \sup_{t\in [0,T]}\EE^\QQ [\|\wdh u_t\|_{\CU}^2 ]\Big) <\infty.
\eea
For the stochastic integral, we have
\bea\ad
\EE^\QQ \Big[\Big\|\int_0^T e^{(T-s)\Dl} \sg_x(\wdh x_s, \wdh u_s) \wdt x_s^{1,\e} dW_s \Big\|_{H_0^\ga(\La)}^2 \Big] \\
\aad =\EE^\QQ \int_0^T \|e^{(T-s)\Dl} \sg_x(\wdh x_s, \wdh u_s) \wdt x_s^{1,\e} \|_{\CL_2(\Xi; H_0^\ga(\La))}^2 ds  \\
\aad \leq \EE^\QQ \int_0^T \|e^{(T-s)\Dl}\|_{\CL(L^2(\La); H_0^\ga(\La))}^2 \|\sg_x(\wdh x_s, \wdh u_s)\|_{\CL(L^2(\La); \CL_2(\Xi; L^2(\La)))}^2  \|\wdt x_t^{1,\e}\|_{L^2(\La)}^2 ds  \\
\aad \leq K \sup_{t\in [0,T]} \EE^\QQ \big[ \|\wdt x_t^{1,\e}\|_{L^2(\La)}^2 \big]\int_0^T (T-s)^{-2\ga} ds <\infty.
\eea
Moreover,
\bea\ad
\EE^\QQ \Big[\Big\| \frac{1}{\sqe} \int_0^T e^{(T-s)\Dl} ( \sg(\wdh x_s, u_s^\e)- \sg(\wdh x_s, \wdh u_s))  dW_s \Big\|_{H_0^\ga(\La)}^2 \Big] \\
\aad \leq \EE^\QQ \Big[\frac{1}{\e} \int_\tau^{\tau+\e} \| e^{(T-s)\Dl} (\sg(\wdh x_s, v) - \sg(\wdh x_s, \wdh u_s)) \|_{\CL_2(\Xi; H_0^\ga(\La))}^2 ds \Big] \\
\aad \leq K \Big(1+\sup_{t\in [0,T]} \EE^\QQ \big[\|\wdh x_t\|_{H_0^\ga(\La)}^2 \big] + \|v\|_\CU^2 + \sup_{t\in [0,T]} \EE^\QQ \big[\| \wdh u_t\|_{\CU}^2 \big]\Big) <\infty.
\eea
For estimates of stochastic integrals w.r.t.  Brownian motions $B$, we can apply the same argument as that of $W$. The proof of the lemma is then completed.
\end{proof}

We now proceed to prove our main result.

\begin{proof}[Proof of \thmref{thm:Peng-smp}]
Recall $\EE_4^\QQ$ in \eqref{CT-eta1}, we divide by $\e$ to $\EE_4^\QQ$ and let $\e \to 0$. Then \eqref{ue} implies that as $\e\to 0$,
\bea\disp
\frac{1}{\e}\EE_4^\QQ
\ad \to \EE^\QQ \Big[ H(\wdh x_\tau, v, p_\tau, q_{1,\tau}, q_{2,\tau}, z_{2,\tau}-\qv{p_\tau, g(\wdh x_\tau, \wdh u_\tau)}_{L^2(\La)}) \\
\aad \qquad\quad - H(\wdh x_\tau, \wdh u_\tau, p_\tau, q_{1,\tau}, q_{2,\tau}, z_{2,\tau} -\qv{p_\tau, g(\wdh x_\tau, \wdh u_\tau)}_{L^2(\La)}) \Big| \CF_\tau^Y \Big] \\
\aad\quad + \half \EE^\QQ \Big[\qv{P_\tau^\eta(\la,\mu), \bqv{\sg(\wdh x_\tau(\la), v) - \sg(\wdh x_\tau(\la),\wdh u_\tau), \\
\aad \qquad \qquad\qquad \qquad\quad \sg(\wdh x_\tau(\mu),v) - \sg(\wdh x_\tau(\mu), \wdh u_\tau)}_{\CL_2(\Xi;\rr)} } \Big| \CF_\tau^Y \Big] \\
\aad\quad  +\half  \EE^\QQ \Big[ \qv{P_\tau^\eta(\la,\mu), \bqv{g^j(\wdh x_\tau(\la), v)- g^j(\wdh x_\tau(\la), \wdh u_\tau),\\
\aad \qquad \qquad\qquad \qquad\quad  g^j(\wdh x_\tau(\mu),v) - g^j(\wdh x_\tau(\mu), \wdh u_\tau)}_{\rr} } \Big| \CF_\tau^Y \Big].
\eea

It remains to prove that
\bea\disp
\lim_{\eta \to 0} \lim_{\e\to 0} \frac{1}{\e}\EE^\QQ\Big[  \int_\La m_{xx}(\wdh x_T(\la)) x_T^{1,\e}(\la) x_T^{1,\e}(\la) d\la- \int_\La m_{xx}^\eta(\la,\mu) X_T^\e(\la,\mu) d\la d \mu  \Big]=0.
\eea
By \lemref{lem:compact} and the compact embedding $H_0^\ga(\La) \subset \subset L^2(\La), \ga\in (0,1/2)$, we can extract a subsequence of $\wdt x_T^{1,\e}$ such that it converges to $\wdt x_T^1 \in L^2(\La)$ in $L^2(\La)$. Thus, we have
\bea\ad
\lim_{\eta\to 0}\lim_{\e\to 0} \frac{1}{\e} \EE^\QQ \Big[\int_\La m_{xx}(\wdh x_T(\la)) x_T^{1,\e}(\la) x_T^{1,\e}(\la)d\la - \int_{\La^2} m_{xx}^\eta(\la,\mu) X_T^\e(\la,\mu)d\la d\mu \Big] \\
\aad =\lim_{\eta\to 0}\EE^\QQ \Big[\int_\La m_{xx}(\wdh x_T(\la)) \wdt x_T^1(\la) \wdt x_T^1(\la) d\la - \int_{\La^2} m_{xx}^\eta(\la,\mu) \wdt x_T^1(\la) \wdt x_T^1(\mu) d\la d\mu\Big]=0.
\eea
Consequently, we complete the proof.
\end{proof}

\section{Numerical algorithm for solving partially observed optimal control}\label{sec:algo}
In this section, we develop a numerical algorithm to solve the partially observed optimal control for SPDEs where the system state and observation process have correlated noise. In practical applications, controlling volatility is difficult. Therefore, in this section, we consider that the diffusion coefficients in \eqref{spde} are independent of the control variable.  Moreover, we note that the equation \eqref{pq} of $p_t$ has a series term in the drift due to
\beq{series}
\qv{\sg_x(x,u), q_{1,t}}_{\CL_2(\Xi; L^2(\La))} = \sum_{i=1}^\infty [\sg(x,u)e_i]_x^* q_{1,t}e_i.
\eeq
From computational point of view, it is best to consider the state to be driven either by the additive cylindrical Wiener process $W_t$ or by finitely many Brownian motions $W_t^i,i=1,\dots, N$, $N\in \NN$ with multiplicative coefficients. The correlated noise $B_t$ stays the same. Consequently, \eqref{series} becomes a finite sum in the drift of $p_t$.

The state process we consider becomes
\beq{spde-num}\left\{\barray
dx_t^u = [\Dl x_t^u + b(x_t^u, u_t)] dt + \sg^i(x_t^u) dW_t^i + g^j(x_t^u) dB_t^j \\
x_0^u = x_0.
\earray\right.\eeq
Again, we used the Einstein summation convention.
The observation process is
\bea
d Y_t = h(x_t^u,u_t) dt + dB_t,
\eea
where $h$ is a mapping from $L^2(\La) \times U$ to $\rr^d$. We assume that the control is in a convex set. \thmref{thm:pon-smp} implies that the gradient of cost functional with respect to control variable becomes
\beq{nabla-J}
\nabla_u J(u):=\EE^\QQ \big[b_u^*(x_t, u_t) p_t + L_u(x_t, u_t)+ h_u^{j,*}(x_t, u_t) z_{2,t}^j | \CF_t^Y \big].
\eeq

\subsection{Conditional SGD algorithm}
We are interested in computing the optimal control $\wdh u_t$ at time $t$ given the available information $\CF_t^Y$. We use the gradient descent algorithm to achieve this purpose:
\beq{SGD}
u_t^{\iota+1,Y}= u_t^{\iota,Y}-\al \nabla_u J(u_t^{\iota,Y}),\iota = 0,1,2,\dots
\eeq
where $\al$ is learning rate or step size, $\iota$ is the iteration number. We observe that computing $\nabla_u J(u_t^{\iota,Y})$ in \eqref{SGD} requires the value $p_t$ at time $t$. To obtain $p_t$, we need to solve the backward SPDE \eqref{pq} and the backward SDE \eqref{z} from time $T$ to $t$. However, solving \eqref{pq} depends on the control values at time $r\in (t,T]$. In the SGD update \eqref{SGD} at time $t$, we only have access to the filtration $\CF_t^Y$, while $\CF_r^Y$ for $r\in (t,T]$ remains unavailable. Hence, the control $u_r$ cannot be used directly in solving \eqref{pq} for $r\in [t,T]$ given the available information $\CF_t^Y$.

To overcome this challenge, we replace $u_r^{\iota,Y}, r\in (t,T]$ with its conditional expectation given $\CF_t^Y$. This substitution is justified by the fact that the conditional expectation provides the best approximation of $u_r^{\iota,Y}$ based on the available information $\CF_t^Y$. Define
\bea\ad
u_r^{\iota,Y}|_t := \EE^\QQ[u_r^{\iota,Y}| \CF_t^Y].
\eea

We then arrive at the following algorithm
\beq{cond-SGD}
u_r^{\iota+1,Y}|_t = u_r^{\iota,Y}|_t -\al \nabla_u J(u_r^{\iota,Y}|_t), \iota=0,1,2,\dots,
\eeq
for $r\in [t,T]$ and $\nabla_u J(u_r^{\iota,Y})$ is calculated by \eqref{nabla-J}.

In numerical approximations, all values in the Hilbert space, namely $x_t, u_t, p_t, q_1^i$, $q_2^j$, $i=1,\dots, N, j=1,\dots, d$, will be projected to the finite-dimensional space $S_h$ using the finite element approximation. To avoid introducing more notation, in the following, we regard them as $S_h$-valued variables with certain abuse of notation. We will apply the implicit Euler method for time discretization to avoid restrictions between time and space step sizes. For details on numerical schemes for solving forward-backward SPDEs of $x$ and $p$ using finite element approximations and those for solving the BSDE of $z$, we refer to \cite[Section 3,4]{BCQ25}.

Define
\bea
\psi(x,u,p,z_{2}) := b_u^*(x,u)p + L_u(x,u) + h_u^{j,*}(x,u)z_{2}^j.
\eea
Then
\beq{nabla-J}\barray
\nabla_u J(u_r^{\iota,Y})
\ad = \EE^\QQ [\psi(x_r, u_{r}^{\iota,Y}|_t, p_r, z_{2,r}) |\CF_t^Y] \\
\aad = \int_{S_h} \EE^\QQ \big[\psi(x_r, u_r^{\iota,Y}|_t, p_r, z_{2,r}) \big| x_t= x\big]\cdot p\big(x|\CF_t^Y \big) dx,
\earray\eeq
where $p(x_t|\CF_t^Y)$ is the probability density function (pdf) of the law $x_t$ given the observation $\CF_t^Y$. This is where nonlinear filtering plays a crucial role in computing partially observed optimal control. The pdf $p(x_t| \CF_t^Y)$ will be approximated by the empirical distributions $\pi(x_t|\CF_t^Y)$ using particles. We will use the Bayesian
framework and branching particle filtering algorithm to update $\pi$.

\subsection{The branching algorithm}
In particle filtering algorithm, a well-known issue is the weight degeneracy problem. That is, after several time steps, the weights of most particles tend to zero, making the particles concentrate on a few samples. It reduces the effective particle size in the algorithm. To overcome this problem, several resampling strategies have been applied, such as systematic resampling, residual resampling, and stratified resampling. However, instead of using resampling strategies, we will use the branching algorithm in this paper because it produces fewer errors and variance; see \cite{WWX24} and \cite{BC09} for comprehensive study on the branching algorithm.

To start, we divide the time interval $[0,T]$ into equal lengths $[k\varrho, (k+1)\varrho)$, where $k>0$ and $\varrho$ is the time step size. At time $0$, the particle system consists of $S$ particles of equal weight $1/S$ distributed at locations $\{x_0^{\vsg,S}\}_{\vsg=1}^S$. The approximating measure at time $0$ is then given by $\pi_0^S :=\frac{1}{S}\sum_{\vsg=1}^S \dl_{x_0^{\vsg,S}}$, where $\dl_x$ is the Dirac measure at $x$.

During the time interval $[k\varrho,(k+1)\varrho)$, the particles all move with the same law as the signal/state $x$. That is for $t\in [k\varrho, (k+1)\varrho)$,
\beq{xt-int}\barray
x_t^{\vsg,S} \ad = x_{k\varrho}^{\vsg, S} + \int_{k\varrho}^t \big(\Dl x_r^{\vsg,S} + b(x_r^{\vsg, S}, u_r) \big) dr -\sum_{j=1}^d \int_{k \varrho}^t g^j(x_r^{\vsg, S}) h^j(x_r^{\vsg, S}, u_r)dr \\
\aad \quad + \sum_{i=1}^N \int_{k\varrho}^t \sg^i(x_r^{\vsg, S}) dW_r^i +\sum_{j=1}^d \int_{k\varrho}^t g^j(x_r^{\vsg, S})dY_r^j ,
\earray\eeq
where $\{W_r^i\}_{i=1}^N$ are mutually independent Brownian motions, independent of $Y$. Remember we will use finite element approximation for spatial discretization, \eqref{xt-int} turns out to be a SDE. The weight $\lbar M_t^{\vsg, S}$ is of the form
\beq{lbarM}\barray\ad
\lbar M_t^{\vsg,  S}=\frac{M_t^{\vsg, S}}{\sum_{l=1}^S M_t^{l, S}},
\earray\eeq
where
\beq{Mt-int}\barray\ad
M_t^{\vsg, S} := \exp\Big(\int_{k\varrho}^t h(x_r^{\vsg, S}, u_r) dY_r - \half \int_{k\varrho}^t |h(x_r^{\vsg, S}, u_r)|^2 dr \Big), \quad t\in [k\varrho, (k+1)\varrho).
\earray\eeq
For $t\in [k\varrho, (k+1)\varrho)$, we define the empirical measure of particles at time $t$ as
\bea\ad
\pi_t^S = \sum_{\vsg=1}^S \lbar M_t^{\vsg, S} \dl_{x_t^{\vsg, S}}.
\eea
At the end of the interval, each particle is branched into a random number $o_{(k+1)\varrho}^{\vsg, S}$ of offsprings. Each offspring particle initially inherits the spatial position of its parent. Then $o_{(k+1)\varrho}^{\vsg, S}$ is $\CF_{(k+1)\varrho}$-adapted, and
\beq{o-tau}
o_{(k+1)\varrho}^{\vsg,S}:= \left\{\barray
\left[S \lbar M_{(k+1)\varrho}^{\vsg, S}\right],\ad \text{ with probability } 1-\left\{S \lbar M_{(k+1)\varrho}^{\vsg, S}\right\},\\

\left[S \lbar M_{(k+1)\varrho}^{\vsg, S}\right]+1, \ad \text{ with probability } \left\{S \lbar M_{(k+1)\varrho}^{\vsg, S}\right\},
\earray\right.
\eeq
where $[x]$ is the largest integer smaller than $x$ and $\{x\}$ is the fractional part of $x$, that is, $\{x\}=x-[x]$, and
\bea\ad
\lbar M_{(k+1)\varrho}^{\vsg, S} := \lbar M_{(k+1)\varrho -}^{\vsg, S} = \lim_{t\uparrow (k+1)\dl} \lbar M_{t}^{\vsg,S}.
\eea
Then the direct computation from \eqref{o-tau} implies
\bea
\EE\Big[o_{(k+1)\varrho}^{\vsg,S} \Big| \CF_{(k+1)\varrho-} \Big]  = S \lbar M_{(k+1)\varrho}^{\vsg, S}.
\eea
Moreover, the minimal of conditional variance of the number of offspring is
\bea\ad
\EE\Big[ \Big(o_{(k+1)\varrho}^{\vsg, S} \Big)^2 \Big| \CF_{(k+1)\varrho-}\Big] - \Big( \EE \Big[o_{(k+1)\varrho}^{\vsg,S} \Big| \CF_{(k+1)\varrho-} \Big] \Big)^2 \\
\aad = \left\{S \lbar M_{(k+1)\varrho}^{\vsg, S} \right\} \Big(1-\left\{S \lbar M_{(k+1)\varrho}^{\vsg, S}\right\} \Big),
\eea
see \cite[Exercise 9.1]{BC09} with its proof in \cite[Proposition 2]{WWX24}. We will control the branching process such that the total number of particles in the system remains at $S$, that is
$\sum_{\vsg=1}^S o_{(k+1)\varrho}^{\vsg,S}=S.$

Consequently, combining the conditional SGD algorithm in \eqref{cond-SGD}, numerical approximation schemes to solve forward-backward SPDEs and BSDEs for \eqref{spde-num}, \eqref{pq}, and \eqref{z} in \cite[Section 3,4]{BCQ25}, as well as the branching particle filtering algorithm above, we obtain a full numerical algorithm to solve partially observed optimal control for SPDEs where the state and observation have finite-dimensional correlated noise. Moreover, we notice that in the computation of the gradient of the cost functional \eqref{nabla-J}, BSPDEs \eqref{pq}, and BSDEs \eqref{z}, one needs to compute conditional expectations. They can be approximated by the Monte-Carlo simulations using samples. In practice, this becomes very costly when the dimension of system state is high. Motivated by the stochastic approximation algorithm, specifically by the stochastic gradient descent algorithm, we utilize a single realization of trajectory to represent the conditional expectation; see details in \cite{BCQ25,ABYZ20} and \cite{KY03}.
We summarize our numerical algorithm in Algorithm \ref{alg:branch}.

\begin{algorithm}
\caption{ 
Finite element based SGD with branching particle filtering}

\label{alg:branch}
\begin{algorithmic}
\State{Initialize initial distribution $\pi_0$, set parameters $T, N, N_T, \varrho, S, n_{\text{SGD}}, n_{\text{FE}}$.}

\For{$\vsg =1,2,\dots, S$}
\State{Sample $x_0^{\vsg,S}$ from $\pi_0$;}

\State{Set $M_0^{\vsg,S}=1$.
}
\EndFor
\While{$n=0,1,\dots, N_T$}

\State{Initialize
$\{u_{t_k}^{0,Y}|_{t_n}\}_{k=n}^{N_T}$, $t_k:= k\varrho$, and set the learning rate $\al$.}

\For{$\iota=1,2,\dots, n_{\text{SGD}}$}

\State (i) \underline{Initialization}: set initial state $x_{t_n}^{\wdh \vsg, S}$ at time $t_n$, where $x_{t_n}^{\wdh \vsg, S}$ is randomly
\Statex \hspace{\algorithmicindent}\hspace{2.6em}
selected from
particle cloud $\{x_{t_n}^{\vsg, S}\}_{\vsg=1}^S$ at time $t_n$.

\State (ii) \underline{FSPDE}: compute one single realization $\{x_{t_k}^{\wdh \vsg, S}\}_{k=n}^{N_T}$ of FSPDE \eqref{xt-int}.

\State (iii) \underline{BSPDE}: compute one single realization $\{p_{t_k}^{\wdh \vsg, S}, q_{1,t_k}^{\wdh \vsg, S}, q_{2,t_k}^{\wdh \vsg, S} \}_{k=N_T}^{n}$ of
\Statex \hspace{\algorithmicindent}\hspace{3.0em} BSPDE \eqref{pq} and one single realization of $\{z_{t_k}^{\wdh \vsg, S}, z_{1,t_k}^{\wdh \vsg, S}, z_{2,t_k}^{\wdh \vsg, S}\}_{k=N_T}^{n}$ of

\Statex \hspace{\algorithmicindent}\hspace{3.0em} BSDE  \eqref{z} corresponding to the trajectory $\{ x_{t_k}^{\wdh \vsg, S}\}_{k=n}^{N_T}$.

\State (iv) \underline{SGD}: iteratively update the control using conditional SGD \eqref{cond-SGD}.
\EndFor

\State{The estimated optimal control at time $t_n$ is given by $\wdh{u}^{Y}_{t_n} = u_{t_n}^{n_{\text{SGD}}, Y}$.}

\State{Generate the location and weights $(x_{(n+1)\varrho}^{\vsg, S}, M_{(n+1)\varrho}^{\vsg, S})$ by \eqref{xt-int} and \eqref{Mt-int}, resp.}
\For{$\vsg =1, 2, \dots, S$}
\State{Compute the normalized weight $\lbar M_{(n+1) \varrho}^{\vsg, S}$ by \eqref{lbarM} letting $t=(n+1)\varrho$.}
\EndFor
\For{$\vsg'= 1, 2, \dots, S$.}

\State
\strut (i) Compute the random number $o_{(n+1)\varrho}^{\vsg', S}$ of offsprings for the $\vsg'$-particle in

\Statex \hspace{\algorithmicindent}\hspace{1.4em}
particle cloud with location/weights $(x_{n \varrho}^{\vsg, S}, M_{(n +1) \varrho}^{\vsg, S})$ by \eqref{o-tau}.

\State{(ii) Replace each particle by $o_{(n+1)\varrho}^{\vsg',S}$ offsprings such that $\sum_{\vsg'=1}^S o_{(n+1) \varrho}^{\vsg', S} = S$.}

\EndFor
\EndWhile
\end{algorithmic}
\end{algorithm}

\section{Numerical experiments}\label{sec:num}
In the following numerical experiments, we use nFE$=400$ finite elements and a time step size $\Dl t= 0.01$ with terminal time $T=1$. The stochastic gradient descent algorithm is run for nSGD $=1000$ iterations. For demonstration, we consider the following partially controlled linear stochastic heat equation. However, our numerical algorithm in \secref{sec:algo} is general and can be applied to nonlinear SPDEs; see \cite[Section 5]{BCQ25} for numerical experiments in nonlinear cases.
\begin{exm}[Stochastic heat equation]\rm{
We consider the state satisfies following stochastic heat equation driven by additive cylindrical Wiener process and by finite many Brownian motions with multiplicative coefficients:
\beq{heat}\left\{\barray
dx_t = [\Dl x_t + u_t] + 0.05dW_t + \sum_{j=1}^d 0.03(x_t+1) e_j dB_t^j \\
x_0  = x_0 \in L^2(0,10)
\earray\right.
\eeq
with $L=10$ and Dirichlet boundary condition $x_0 =0$. The observation process is taken as
$dY_t = h(x_t, u_t) dt + dB_t,\; Y_0=0\in \rr^d, d=5,$
where $h(x)=\arctan(\qv{x+u,\sg_1}_{L^2(\La)}, \dots, \qv{x+u, \sg_d}_{L^2(
\La)})$ for pre-selected elements $\sg_1,\dots$, $\sg_d$ $\in L^2(\La)$. The use of pre-selected elements $\{\sg_1,\dots, \sg_d\}$ is natural in applications. In complex systems such as climate models, turbulence, and water movement in natural environments, observing trajectories at every spatial point is infeasible. Instead, monitoring personnel systematically select some detection points and put sensors to collect state trajectory information. We denote these detection points as $\sg_1,\dots, \sg_d$ that depend only on the spatial variable. Their numerical settings can be found in \cite[Section 5]{BCQ25}. The cost functional we consider is
\bea\ad
J(u)= \EE^\QQ \int_0^1 \half \big[\|x_t\|_{L^2(0,10)}^2 + \|u_t\|^2_{L^2(0,10)} \big] dt + \half \EE^\QQ\|x_T\|_{L^2(0,10)}^2.
\eea
The BSDE \eqref{z} remains the same and the BSPDE in \eqref{pq} becomes
\bea
dp_t \ad = -[\Dl p_t + g_x^j(\wdh x_t) q_{2,t}^j+ h_x^{j,*}(\wdh x_t, \wdh u_t) z_{2,t}^j - g^j(\wdh x_t) h_x^j(\wdh x_t, \wdh u_t) p_t \\
\aad \qquad  + \ell_x(\wdh x_t, \wdh u_t)]dt+ q_{1,t} dW_t + q_{2,t}^j dB^j, \quad q_T= m_x(\wdh x_T).
\eea
Take $\al=0.001$, after $1000$ iterations, we obtain an approximate cost $J \approx 0.8464$. \figref{fig:heat-unc-state} shows a sample path of the stochastic heat equation \eqref{heat} without control, while \figref{fig:heat-control-state} presents the approximated partially observed optimal control (left) and the corresponding sample path of the stochastic heat equation (right).
\begin{figure}[H]
\centering
\includegraphics[width=0.48\textwidth]{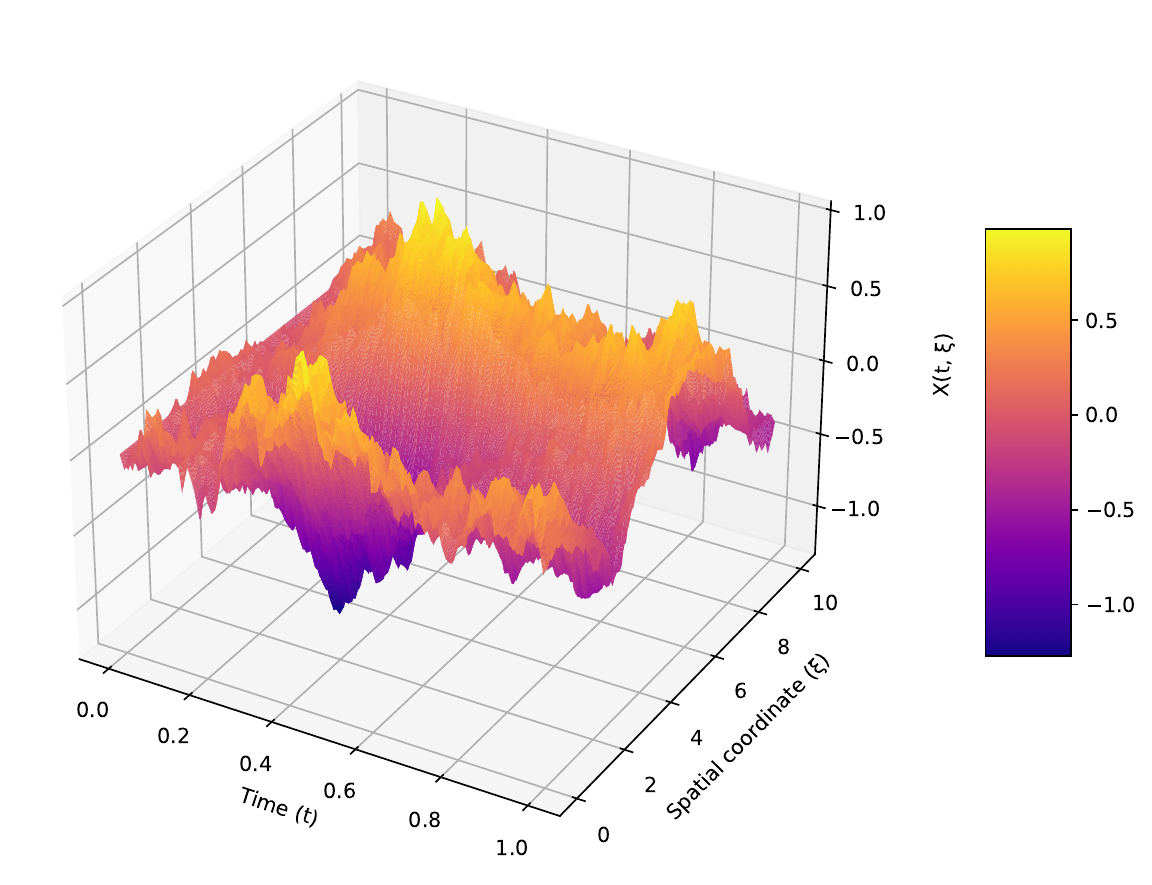}
\label{fig:heat-unc-state}
\caption{Sample path of stochastic heat equations without control.}
\end{figure}

\begin{figure}[H]
\includegraphics[width=0.48\textwidth]{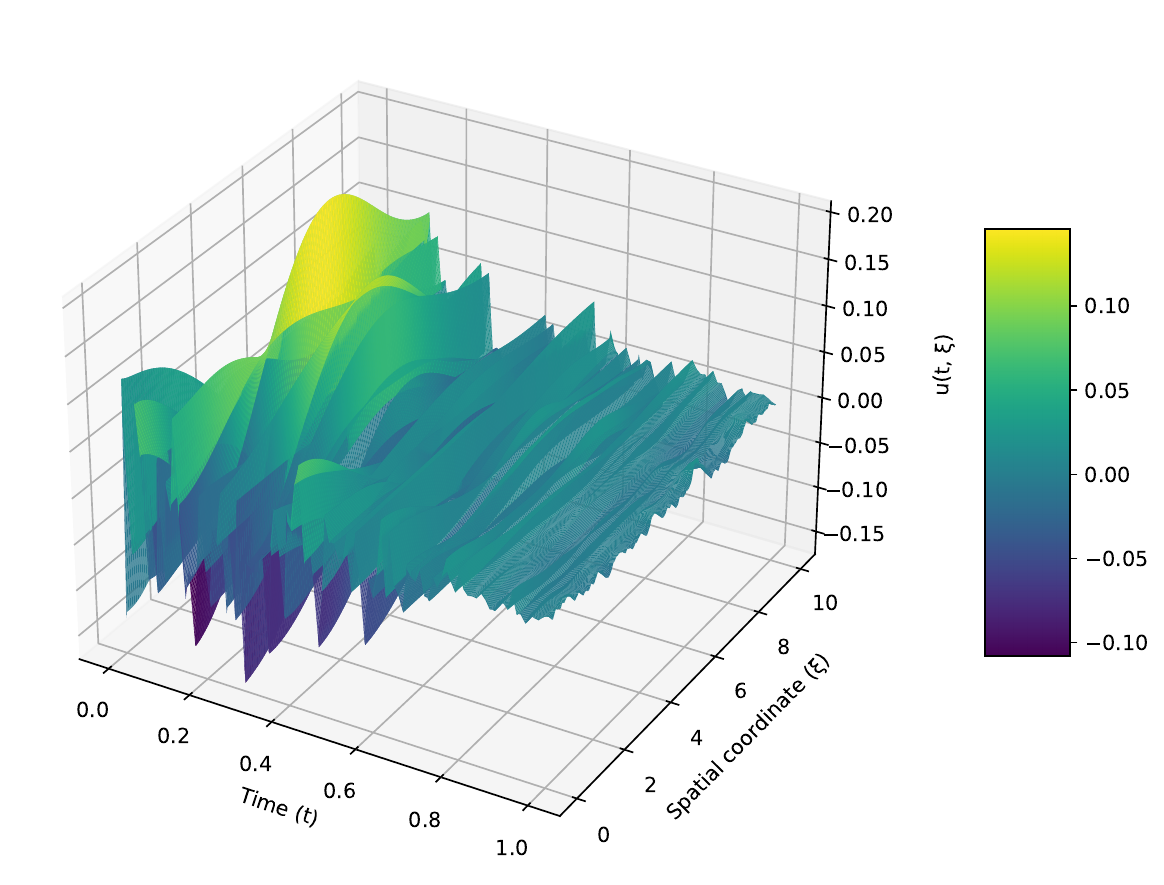}
\includegraphics[width=0.48\textwidth]{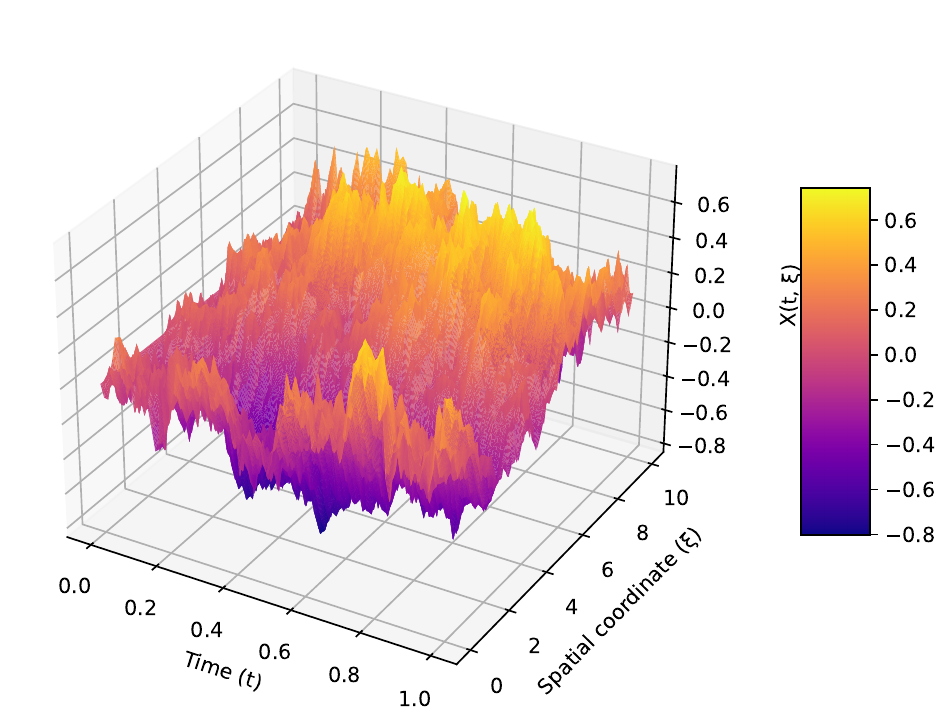}
\caption{Sample path of approximated partially observed optimal control and the corresponding sample path of  optimal controlled state.}
\label{fig:heat-control-state}
\end{figure}
}
\end{exm}

\section{Concluding remarks}
In this paper, we establish Peng's maximum principle for partially observed optimal control of SPDEs with finite-dimensional observations that have correlated noise with the state process. To approximate optimal control, we develop an efficient numerical algorithm that combines finite element approximations for solving forward-backward SPDEs, branching particle filtering for state estimation, and stochastic gradient descent algorithms for the optimization of the control.

The Laplacian $\Dl$ in the state equation \eqref{spde} can be replaced by a general operator $A: \CD(A) \subset L^2(\La) \to L^2(\La)$ of the quadratic form
$\int_\La a(\la) (\partial_\la x)^2(\la)d\la, \; x\in H_0^1(\La),$
for some $a\in L^\infty(\La)$ with $a(\la) \geq a_0 >0$. Moreover, the one-space dimension can be generalized to a higher-space dimension, but one needs a higher space regularity for the noise coefficients $\sg$ and $g$. They are required to guarantee that the operator $\pi$ is a continuous operator; see details in \cite{SW21} and \cite[Section 7.38]{AF03}.

Future research will focus on analyzing the algorithm's convergence, establishing  sufficient conditions for the existence of the partially observed optimal control problem, and exploring applications to more complex real-world models.


\bibliographystyle{siamplain}

\begin{thebibliography}{10}

\bibitem{AF03}
{\sc R.~A. Adams and J.~J. Fournier}, {\em Sobolev Spaces}, vol.~140, Elsevier, 2003.

\bibitem{Ahm96}
{\sc N.~U. Ahmed}, {\em Optimal relaxed controls for infinite-dimensional stochastic systems of {Z}akai type}, SIAM J. Control Optim., 34 (1996), pp.~1592--1615.

\bibitem{Ahm14}
{\sc N.~U. Ahmed}, {\em Stochastic evolution equations on {H}ilbert spaces with partially observed relaxed controls and their necessary conditions of optimality}, Discussiones Mathematicae, Differential Inclusions, Control and Optimization, 34 (2014), pp.~105--129.

\bibitem{Ahm19}
{\sc N.~U. Ahmed}, {\em Partially observed stochastic evolution equations on {B}anach spaces and their optimal {L}ipschitz feedback control law}, SIAM J. Control Optim., 57 (2019), pp.~3101--3117.

\bibitem{AFZ97}
{\sc N.~U. Ahmed, M.~Fuhrman, and J.~Zabczyk}, {\em On filtering equations in infinite dimensions}, J. Funct. Anal., 143 (1997), pp.~180--204.

\bibitem{ABYZ20}
{\sc R.~Archibald, F.~Bao, J.~Yong, and T.~Zhou}, {\em An efficient numerical algorithm for solving data driven feedback control problems}, J. Sci. Comput., 85 (2020), pp.~1--27.

\bibitem{BC09}
{\sc A.~Bain and D.~Crisan}, {\em Fundamentals of Stochastic Filtering}, vol.~60 of Stochastic Modelling and Applied Probability, Springer, New York, 2009.

\bibitem{BCQ25}
{\sc F.~Bao, Y.~Cao, and H.~Qian}, {\em Numerical approximations for partially observed optimal control of stochastic partial differential equations}, 2025, \url{https://arxiv.org/abs/2504.00381}.

\bibitem{BEK89}
{\sc J.~S. Baras, R.~J. Elliott, and M.~Kohlmann}, {\em The partially observed stochastic minimum principle}, SIAM J. Control Optim., 27 (1989), pp.~1279--1292.

\bibitem{Ben83}
{\sc A.~Bensoussan}, {\em Stochastic maximum principle for distributed parameter systems}, J. Franklin Inst., 315 (1983), pp.~387--406.

\bibitem{Ben92}
{\sc A.~Bensoussan}, {\em Stochastic Control of Partially Observable Systems}, Cambridge University Press, Cambridge, 1992.

\bibitem{BV75}
{\sc A.~Bensoussan and M.~Viot}, {\em Optimal control of stochastic linear distributed parameter systems}, SIAM J. Control, 13 (1975), pp.~904--926.

\bibitem{DM13}
{\sc K.~Du and Q.~Meng}, {\em A maximum principle for optimal control of stochastic evolution equations}, SIAM J. Control Optim., 51 (2013), pp.~4343--4362.

\bibitem{Fle68}
{\sc W.~H. Fleming}, {\em Optimal control of partially observable diffusions}, SIAM J. Control, 6 (1968), pp.~194--214.

\bibitem{FP82}
{\sc W.~H. Fleming and E.~Pardoux}, {\em Optimal control for partially observed diffusions}, SIAM J. Control Optim., 20 (1982), pp.~261--285.

\bibitem{FZ20}
{\sc H.~Frankowska and X.~Zhang}, {\em Necessary conditions for stochastic optimal control problems in infinite dimensions}, Stoch. Process. Appl., 130 (2020), pp.~4081--4103.

\bibitem{FHT12}
{\sc M.~Fuhrman, Y.~Hu, and G.~Tessitore}, {\em Stochastic maximum principle for optimal control of {SPDE}s}, Comptes Rendus. Math{\'e}matique, 350 (2012), pp.~683--688.

\bibitem{FHT13}
{\sc M.~Fuhrman, Y.~Hu, and G.~Tessitore}, {\em Stochastic maximum principle for optimal control of {SPDE}s}, Appl. Math.  Optim., 68 (2013), pp.~181--217.

\bibitem{FHT18}
{\sc M.~Fuhrman, Y.~Hu, and G.~Tessitore}, {\em Stochastic maximum principle for optimal control of partial differential equations driven by white noise}, Stoch. Partial Differ. Equ.: Anal. Comput., 6 (2018), pp.~255--285.

\bibitem{GLTZZ17}
{\sc B.~Gong, W.~Liu, T.~Tang, W.~Zhao, and T.~Zhou}, {\em An efficient gradient projection method for stochastic optimal control problems}, SIAM J. Numer. Anal., 55 (2017), pp.~2982--3005.

\bibitem{Hau87}
{\sc U.~Haussmann}, {\em The maximum principle for optimal control of diffusions with partial information}, SIAM J. Control Optim., 25 (1987), pp.~341--361.

\bibitem{Ku65} {\sc H.J. Kushner},
{\it On the stochastic maximum principle: Fixed time of control},
Math. Anal. Appl. 11 (1965), 78--92.

\bibitem{Kus77}
{\sc H.~J. Kushner}, {\em Probability Methods for Approximations in Stochastic Control and for Elliptic Equations.}, Academic Press, New York, 1977.

\bibitem{KD01}
{\sc H.~J. Kushner and P.~Dupuis}, {\em Numerical Methods for Stochastic Control Problems in Continuous Time}, Springer-Verlag, New York, 2nd ed., 2001.

\bibitem{KY03}
{\sc H.~J. Kushner and G. Yin}, {\em Stochastic Approximation and Recursive Algorithms and Applications}, Springer-Verlag, New York,  2nd ed., 2003.

\bibitem{LT95}
{\sc X.~Li and S.~Tang}, {\em General necessary conditions for partially observed optimal stochastic controls}, J. Appl. Probab., 32 (1995), pp.~1118--1137.

\bibitem{LTT24}
{\sc Y.~Li, X.~Tan, and S.~Tang}, {\em Discrete-time approximation of stochastic optimal control with partial observation}, SIAM J. Control Optim., 62 (2024), pp.~326--350.

\bibitem{LR15}
{\sc W.~Liu and M.~R{\"o}ckner}, {\em Stochastic Partial Differential Equations: An Introduction}, Springer, 2015.

\bibitem{LZ15}
{\sc Q.~L{\"u} and X.~Zhang}, {\em Transposition method for backward stochastic evolution equations revisited, and its application}, Math. Control Relat. Fields., 5 (2015), pp.~529--555.

\bibitem{LZ18}
{\sc Q.~L{\"u} and X.~Zhang}, {\em Operator-valued backward stochastic {L}yapunov equations in infinite dimensions, and its application.}, Mathematical Control \& Related Fields, 8 (2018).

\bibitem{Par82}
{\sc E.~Pardoux}, {\em Equations of non-linear filtering; and application to stochastic control with partial observation}, Nonlinear Filtering and Stochastic Control,  (1982), pp.~208--248.

\bibitem{Pen90}
{\sc S.~Peng}, {\em A general stochastic maximum principle for optimal control problems}, SIAM J. Control Optim., 28 (1990), pp.~966--979.

\bibitem{PBGM62}
{\sc L.~S. Pontryagin, V.~G. Boltyanskii, R.~V. Gamkrelidze, and E.~F. Mishchenko}, {\em The Mathematical Theory of Optimal Processes}, Interscience Publishers John Wiley \& Sons, Inc., New York-London, 1962.

\bibitem{QWY22}
{\sc H.~Qian, Z.~Wen, and G.~Yin}, {\em Numerical solutions for optimal control of stochastic {K}olmogorov systems with regime-switching and random jumps}, Stat. Inference Stoch. Process., 25 (2022), pp.~105--125.

\bibitem{QYZ23} {\sc H.~Qian, G.~Yin, and Q.~Zhang}, {\em Deep filtering with adaptive learning rates}, IEEE Trans. Autom. Control,  68 (2023), pp.~3285--3299.

\bibitem{SW21}
{\sc W.~Stannat and L.~Wessels}, {\em Peng's maximum principle for stochastic partial differential equations}, SIAM J. Control Optim., 59 (2021), pp.~3552--3573.

\bibitem{Tan98}
{\sc S.~Tang}, {\em The maximum principle for partially observed optimal control of stochastic differential equations}, SIAM J. Control Optim., 36 (1998), pp.~1596--1617.

\bibitem{WWX24}
{\sc H.~Wan, G.~Wang, and J.~Xiong}, {\em A branching particle system approximation for solving partially observed stochastic optimal control problems via stochastic maximum principle}, Stoch. Partial Differ. Equ.: Anal. Comput., 12 (2024), pp.~675--735.

\bibitem{YWQN21}
{\sc G.~Yin, Z.~Wen, H.~Qian, and H.~Nguyen}, {\em Numerical solutions for optimal control of stochastic {K}olmogorov systems}, J. Syst. Sci. Complex., 34 (2021), pp.~1703--1722.

\bibitem{ZZCZC22}
{\sc F.~Zhang, Y.~Zou, S.~Chai, R.~Zhang, and Y.~Cao}, {\em Splitting-up spectral method for nonlinear filtering problems with correlation noises}, J. Sci. Comput., 93 (2022), p.~25.

\end{thebibliography}

\end{document}